\newcommand{\matdd}[4]{
\left (
\begin{array}{cc}
#1 & #2 \\
#3 & #4
\end{array}
\right ) }
\newcommand{\tabyoung}[3]{
\begin{center}
\begin{tabular}{ m{0mm} *{#1}{|>{\centering\arraybackslash}m{#2 mm}} | m{0mm} }
#3
\end{tabular}
\end{center}
}
\newcommand{\tabyoungnoncentre}[3]{
\begin{tabular}{ m{0mm} *{#1}{|>{\centering\arraybackslash}m{#2 mm}} | m{0mm} }
#3
\end{tabular}
}
\newcommand{\bd}[1]{\rule{0pt}{#1 mm}}
\newcommand{\pd}{\mathrm{pd}}
\newcommand{\depth}{\mathrm{depth}}
\newcommand{\acm}{arithmetically Cohen-Macaulay }
\newcommand{\im}{\mathrm{Im}}
\newcommand{\scal}[1]{\langle #1 \rangle}
\newcommand{\comment}[1]{}
\newcommand{\cC}{{\mathcal C}}
\newcommand{\wh}[1]{\widehat{#1}}
\newcommand{\tr}{{}^t}
\newcommand{\rank}{\mathrm{rank}}
\newcommand{\codim}{\mathrm{codim}}
\newcommand{\Pic}{\mathrm{Pic}}
\def\P{\mathbb{P}} \def\Q{\mathbb{Q}} \def\C{\mathbb{C}}
\def\Z{\mathbb{Z}}
\def\OP{{\mathcal{O}_{\P^2}}}
\def\Oc{{\mathcal{O}}}
\def\Oab{\mathcal{O}_{ab}}
\def\F{\mathcal{F}}
\def\G{{\textrm{G}_{ab}}}
\def\Om{{\Omega_{ab}}}
\def\OY{{\Omega_Y}}
\def\cL{{\mathcal{L}}}
\newtheorem{theorem}{Theorem}
\newtheorem{proposition}{Proposition}[section]
\newtheorem{lemma}[proposition]{Lemma}
\newtheorem{example}[proposition]{Example}
\newtheorem{notation}[proposition]{Notation}
\newtheorem{fact}[proposition]{Fact}
\theoremstyle{definition}
\newtheorem{definition}[proposition]{Definition}
\theoremstyle{remark}
\newtheorem{remark}[proposition]{Remark}
\theoremstyle{plain} 
\newtheorem*{theoA}{Theorem A}
\newtheorem*{theoB}{Theorem B}
\newtheorem*{coroC}{Corollary C}
\newtheorem*{theoD}{Theorem D}
\newtheorem*{theorem*}{Theorem }
\title[Stability of restrictions of cotangent bundles]{Stability of restrictions of 
the cotangent bundle of irreducible Hermitian symmetric spaces of compact type}
\author[I. Biswas]{Indranil Biswas}
\email{indranil@math.tifr.res.in}
\address{School of Mathematics, TIFR, Homi Bhabha Road, Bombay 400005, India}
\author[P.-E. Chaput]{Pierre-Emmanuel Chaput}
\email{pierre-emmanuel.chaput@univ-lorraine.fr}
\address{Institut Élie Cartan de Lorraine,
Université de Lorraine, 54506 Vandoeuvre-lès-Nancy Cedex, France}
\author[Ch. Mourougane]{Christophe Mourougane}
\email{christophe.mourougane@univ-rennes1.fr}
\address{IRMAR,
Campus de Beaulieu, b\^at. 22, Universit\'e Rennes 1,
263 av. G\'en\'eral Leclerc, 35042 Rennes,
France}
\subjclass[2010]{32M15, 32Q26 (primary) 14J60, 14F17, 14N25 (secondary)}
\keywords{Hermitian symmetric space, cotangent bundle, slope-stability, stability under restriction, vanishing theorems for coherent cohomology, projective geometry}
\date{\today}
\begin{document}

\begin{abstract}
It is known that the cotangent bundle $\Omega_Y$ of an irreducible Hermitian symmetric space $Y$ of compact type is stable. 
We show that if $X \subset Y$ is a subvariety whose structure sheaf has a short split resolution 
and such that the restriction map ${\Pic}(Y) \to {\Pic}(X)$ is surjective, 
then, apart from a few exceptions, the restriction $\Omega_{Y|X}$ is stable.
We then address some cases where the Picard group increases by restriction. 
\end{abstract}

\maketitle

\setcounter{tocdepth}{1}
\tableofcontents

\section{Introduction}
Throughout this article, by (semi-)stability of a vector bundle we mean \emph{slope (semi-)stabi\-lity} with respect to some fixed polarization.
The stable vector bundles with zero characteristic classes on a smooth projective variety
are given by the irreducible unitary representations of the fundamental group of the variety.
But, in the study of moduli spaces, it is a difficult and interesting question to produce explicit examples of stable vector bundles 
on algebraic varieties with non-zero characteristic classes.
There are such vector bundles within the framework of homogeneous spaces:
for example, an irreducible homogeneous bundle on a homogeneous space is
stable with respect to any polarization~\cite{Um}, \cite{ramanan}, \cite{indranil}. 
Once one has these stable vector bundles, theorems of Mehta-Ramanathan~\cite{me-ra}, Flenner~\cite{Fl} and Langer~\cite{la} 
assert the stability of the restriction of these bundles to a general hypersurface of high enough degree with respect to the given polarization. 
As, Balaji and Koll\'ar~\cite[question 4]{balaji-kollar}, asked if, having chosen a very ample polarization,
a stable reflexive sheaf on a normal projective surface restricts to a stable bundle on a general curve
of degree $4$ or more, we in fact expect quite small optimal bounds on the degree.

In this article, we address the following general questions: 
\begin{itemize}
\item Produce examples of large rank stable vector bundles on Fano manifolds of small degree.
\item More specifically, what can be said about the stability of the restriction of an irreducible homogeneous bundle $E$, 
defined on a homogeneous space $Y$, to a subvariety~$X\,\subset\, Y$ with emphasis on
giving good bounds on the degree and a description of the required features of the \emph{general} subvarieties~?
\end{itemize}

We study these questions in the following setting.
Recall that a \emph{Hermitian symmetric space} is a Hermitian manifold in which every point is an isolated point of an isometric involution.
It is homogeneous under its isometry group. 
It is called \emph{irreducible} if furthermore it cannot be written as the non trivial product of two Hermitian symmetric spaces.
This property is equivalent to the fact that the isometry group is almost simple.
Compact examples of irreducible Hermitian symmetric spaces consist of the usual Grassmannians, 
the quadric hypersurfaces\footnote{To deal with all quadrics, we handle the case of the two dimensional quadric, although it is reducible.}, 
the Lagrangian Grassmannians parameterizing $n$-dimensional Lagrangian subspaces of $\C^{2n}$ equipped with a symplectic form, 
the spinor Grassmannian parameterizing one family of $n$-dimensional isotropic
subspaces of $\C^{2n}$ equipped with a non degenerate quadratic form, and two exceptional manifolds.

The 2-dimensional quadric has Picard number $2$ and we consider its anti-canonical polarization.
All other examples have Picard number one, so that we do not have to specify the choice of a polarization.
We will denote the ample generator of the Picard group of $Y$ by $\Oc_Y(1)$.
We will denote by $\deg Y$ the top degree self-intersection $\deg Y:=\Oc_Y(1)^{\dim Y}$ 
and by $c_1(Y)$ the index of the Fano manifold $Y$. We recall that
$c_1(Y)$ is defined by the equality $-K_Y\,=\,c_1(Y) \Oc_Y(1)$ as elements of the N\'eron-Severi group of $Y$. 
The degree $\deg_Y X$ of a subvariety $X$ of $Y$ is defined such that
$\Oc_Y(1)_{\mid X}^{\dim X}=\deg_Y X\deg Y$.
In particular, if $X$ is given as the complete intersection of divisors in $\prod_{i=1}^c|\Oc_Y(d_i)|$
then $\deg_Y X=\Pi_{i=1}^c d_i$.

We assume first that $Y$ is a compact irreducible Hermitian symmetric space.
Its cotangent bundle $\Omega_Y$ is an irreducible homogeneous, hence stable, vector bundle.
We choose for $X$ a locally factorial positive dimensional subvariety whose structure sheaf has a short split resolution
(see Definition \ref{defi:short-resolution})
and such that the restriction $\Pic (Y) \to \Pic (X)$ is surjective. 
 By Lefschetz theorem~\cite{lefschetz}, this holds whenever $X$ is a complete intersection of dimension $\dim X\geq 3$ (see also \cite[Example 3.1.25]{lazarsfeld})
or whenever $X$ is a very general complete intersection surface in $\P^n$ except if it is a degree $d\leq 3$ surface in $\P^3$ 
or the intersection of two quadric threefolds in $\P^4$ (see also \cite[Theorem 1]{kim}).

\begin{theoA}[{Theorem~\ref{theo:restriction_hermitian} and
Theorem~\ref{theo:restriction_exceptions}}]
Let $Y$ be a compact irreducible Hermitian symmetric space, and let $X$ be a
locally factorial positive dimensional subvariety whose structure sheaf has a short split resolution in $Y$.
Assume that the restriction homomorphism ${\Pic}(Y) \to {\Pic}(X)$ is surjective. 
If $Y$ is a projective space or a quadric, assume moreover that $X$ has no linear equation.
Then, the restriction of $\Omega_Y$ to $X$ is stable.
\end{theoA}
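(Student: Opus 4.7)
The plan is to argue by contradiction: suppose $\mathcal F\subset\Omega_{Y|X}$ is a saturated destabilizing subsheaf of rank $r$ with $0<r<n:=\dim Y$. By local factoriality of $X$ and surjectivity of $\mathrm{Pic}(Y)\to\mathrm{Pic}(X)$, with $\mathrm{Pic}(Y)=\mathbb{Z}\cdot L$ for $L$ the ample generator, one has $\det\mathcal F\simeq L_{|X}^{\otimes a}$ for some integer $a$. Writing $\iota_Y$ for the Fano index of $Y$, so that $\det\Omega_Y=L^{-\iota_Y}$, the inequality $\mu(\mathcal F)\ge\mu(\Omega_{Y|X})$ becomes the arithmetic condition $an+\iota_Y r\ge 0$, and the inclusion $\det\mathcal F\hookrightarrow\bigwedge^r\Omega_{Y|X}$ amounts to a nonzero, pointwise decomposable section
$$\sigma\in H^0\bigl(X,\,{\textstyle\bigwedge^r}\Omega_{Y|X}\otimes L_{|X}^{-a}\bigr).$$

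To push the existence of $\sigma$ back to $Y$, I would use the Koszul resolution of $\O_X$ associated to the defining equations of the complete intersection; its terms are direct sums of line bundles $L^{-e}$ for $e$ a partial sum of the multidegree $(d_1,\dots,d_c)$. The induced hypercohomology spectral sequence exhibits $H^0(X,\bigwedge^r\Omega_{Y|X}\otimes L_{|X}^{-a})$ as a subquotient of the direct sum of the groups $H^k(Y,\bigwedge^r\Omega_Y\otimes L^{-a-e})$, where $e$ ranges over $k$-fold partial sums. The target is then to show that every such group vanishes whenever $an+\iota_Y r\ge 0$ and $0<r<n$, contradicting the existence of $\sigma$. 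These vanishings are tackled via Borel--Weil--Bott on the flag variety $Y=G/P$: decompose $\bigwedge^r\Omega_Y$ into its irreducible $G$-homogeneous summands (each stable, by Umemura, Ramanan and Biswas), and verify that each twisted summand's highest weight, after shift by the Weyl vector, lies in a Weyl chamber producing zero cohomology in the relevant degree.

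The main obstacle is organising the Borel--Weil--Bott verification uniformly: one must check vanishing across the classification of irreducible Hermitian symmetric spaces---the Grassmannians, the quadric, the Lagrangian and spinor Grassmannians, and the two exceptional types---and across all admissible triples $(r,a,e)$. The ``no linear equation'' hypothesis when $Y$ is a projective space or a quadric is precisely the condition that rules out the single configuration where the vanishing genuinely fails: in the excluded cases, the conormal bundle $\O_X(-1)$ of the ambient hyperplane embeds into $\Omega_{Y|X}$ as a bona fide destabilizing sub-line bundle, corresponding to a boundary weight in the Kostant chamber. I expect the proof to proceed either through a uniform root-combinatorial argument exploiting the cominuscule structure of the Hermitian symmetric root systems, or, failing that, by a case-by-case check through the seven families, with the exceptional Hermitian symmetric spaces $E_6/P_1$ and $E_7/P_7$ likely requiring separate attention.
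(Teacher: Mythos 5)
Your proposal follows essentially the same route as the paper: pass to the determinant of a destabilizing subsheaf to get a nonzero section of $\Omega^p_Y(d)_{|X}$, chase it through the Koszul resolution of $\O_X$ into groups $H^i(Y,\Omega^p_Y(d-d_{ij}))$, and kill these by a Bott-type computation carried out case by case over the classification, with the ``no linear equation'' hypothesis ruling out the boundary configurations for projective spaces and quadrics. The uniform statement you hope for is exactly what the paper proves (via Snow's combinatorial reformulation of Bott's theorem): nonvanishing of $H^q(Y,\Omega^p_Y(l))$ with $q<\dim Y$ forces $l+q\geq p\,c_1(Y)/\dim(Y)$, with an explicit list of equality cases.
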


Using a \emph{relative} Harder-Narasimhan filtration, if $Y$ be a compact irreducible Hermitian symmetric space, 
and $X$ a general complete intersections, we infer the semi-stability of restriction of $\Omega_Y$ to $X$
without assuming that the Picard group of $X$ comes from that of $Y$ (see Theorem \ref{theo:general-restriction_divisor}).

We then deal with the case of small dimensions where the Picard group increases by restriction.
Recall that irreducible Hermitian symmetric spaces of dimension $2$ or $3$ are $\P^2$, $\P^3$ and $\Q^3$.
 We add the similar case of $\Q^2$ for completeness.

\begin{theoB}[Theorem~\ref{theo:general-restriction_divisor}, {Theorem~\ref{theo:restriction_divisor}}]
Let $Y$ be an irreducible compact Hermitian symmetric space of dimension $2$ or $3$.
Let $X \subset Y$ be a smooth divisor, and in the case of ambient dimension $3$, let $C$ be a complete intersection curve.
\begin{itemize}
\item Take $Y=\P^2$. If $\deg_Y X \geq 2$, then $\Omega_{Y|X}$ is semi-stable 
and if $\deg_Y X \geq 3$, then $\Omega_{Y|X}$ is stable.

\item Take $Y=\P^3$. If $\deg_Y X \geq 2$, then $\Omega_{Y|X}$ is stable . 
If $C$ is the complete intersection of two general non-linear hypersurfaces, then $\Omega_{Y|C}$ is semi-stable.

\item Take $Y=\Q^3$.
If $\deg_Y X=1$, then $\Omega_{Y|X}$ is semi-stable and 
if $\deg_Y X\geq 2$, then $\Omega_{Y|X}$ is stable.\\
If $C$ is the complete intersection of two general non-linear hypersurfaces, then $\Omega_{Y|C}$ is semi-stable.
\end{itemize}
\end{theoB}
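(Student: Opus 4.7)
The plan is to handle the four ambient spaces separately, the common input being restriction of the appropriate structural sequence: on $Y=\P^n$ the restricted Euler sequence
\[
 0 \to \Omega_{\P^n|X} \to \O_X(-1)^{\oplus(n+1)} \to \O_X \to 0,
\]
and on $Y=\Q^n\subset \P^{n+1}$ the restricted conormal sequence
\[
 0 \to \O_{\Q^n}(-2) \to \Omega_{\P^{n+1}|\Q^n} \to \Omega_{\Q^n} \to 0.
\]
In every case the slope $\mu(\Omega_{Y|X})$ is read off directly from adjunction, and I would analyse potential destabilising subsheaves $\F\subset \Omega_{Y|X}$ by pushing them through these sequences and bounding degrees of their images.

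For the low-dimensional degenerate cases the answer is forced by a splitting computation. When $Y=\Q^2\cong \P^1\times\P^1$, the cotangent bundle decomposes as $p_1^*\Omega_{\P^1}\oplus p_2^*\Omega_{\P^1}$; both summands restrict on any smooth divisor $X$ to line bundles of the same slope, so $\Omega_{Y|X}$ is polystable, hence semi-stable but not stable. When $Y=\P^2$ and $X$ is a smooth conic, one has $X\cong\P^1$ and $\O_X(-1)\cong\O_{\P^1}(-2)$, so $\Omega_{\P^2|X}$ is a rank two bundle of degree $-6$ on $\P^1$; I would identify it with the syzygy bundle of the three defining forms of $X$ regarded as sections of $\O_{\P^1}(2)$ and check that it splits as $\O(-3)^{\oplus 2}$, matching the first bullet for $\P^2$.

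For the stability statements at higher degree (on $\P^2$, on $\P^3$, and on $\Q^3$ in the large-degree or very general ranges), the strategy is uniform. Suppose $\F\subset \Omega_{Y|X}$ is a saturated destabilising subsheaf of rank $r$; then $\det\F$ is a line bundle, and the Picard-group hypothesis (Lefschetz when $\dim X\geq 3$, very generality when $\dim X=2$) forces $\det\F=\O_X(k)$ for an explicit integer $k$ incompatible with $\mu(\F)>\mu(\Omega_{Y|X})$. Composing the inclusion with the restricted Euler or conormal sequence bounds $h^0$ of twists of $\F$ in terms of the standard cohomology of $\Omega_{\P^n}(t)$ and $\Omega_{\Q^n}(t)$; for surfaces a Bogomolov-type inequality on $X$ then excludes non-line destabilisers, and the remaining line-subsheaf cases are ruled out by a direct vanishing argument on $X$. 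The main obstacle I expect is the tight small-degree regime on $\Q^3$, where the cohomological estimates are sensitive to the geometry of $X$: this is precisely why the statement separates the hyperplane section, the conic section, the intermediate very general range, and the $\deg X\geq 9$ case.

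Finally, the semi-stability assertions for complete intersection curves $C$ cut out by general non-linear hypersurfaces in $\P^3$ and $\Q^3$ are direct consequences of the relative Harder--Narasimhan argument of Theorem~\ref{theo:general-restriction_divisor}, applied to a flat family of such complete intersections, and require no hypothesis on $\mathrm{Pic}(C)$ since only semi-stability is asserted.
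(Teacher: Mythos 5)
Your proposal has a genuine structural gap: the cases that Theorem B is actually designed to handle are precisely those where the Picard group of $X$ \emph{increases}, and your uniform strategy --- ``the Picard-group hypothesis (Lefschetz when $\dim X\geq 3$, very generality when $\dim X=2$) forces $\det\F=\O_X(k)$'' --- is unavailable there. The theorem asserts stability for \emph{every} smooth quadric surface $X\subset\P^3$ (where $X\cong\P^1\times\P^1$ and $\Pic(X)=\Z^2$), for every hyperplane section of $\Q^3$ (again $\P^1\times\P^1$), and for every quadric section of $\Q^3$ (a degree-four Del Pezzo surface with $\Pic(S)=\Z^6$). None of these is ``very general'' in any useful sense, and for none of them can you reduce $\det\F$ to a twist of $\O_X(1)$. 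You correctly identify the small-degree regime on $\Q^3$ as ``the main obstacle,'' but you then offer no argument for it, whereas this is where essentially all the work of the paper's Section~4 lies: for the quadric in $\P^3$ the paper uses ${\rm SL}(2)\times{\rm SL}(2)$-equivariance of the Harder--Narasimhan filtration together with the coprimality of $\deg(\varphi^*T\P^3)=8$ and ${\rm rank}=3$ to classify the possible destabilisers via the Euler sequence; for the hyperplane section of $\Q^3$ it classifies all line subbundles $\O(d_1,d_2)\subset TQ_{|S}$ explicitly and handles rank $2$ by self-duality of $\Omega_Q(-1)$; and for the quadric section of $\Q^3$ it uses the blow-up description of the Del Pezzo quartic, explicit rational $1$-forms $\omega_j$ with prescribed zeroes and poles, and a delicate eigenvector analysis of the matrix of a section of $\Omega_{Q}(2)_{|S}$ to exclude the borderline line bundle $\O_S(-2E_0+2E_j)$. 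None of these ideas appears in your outline.

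Two smaller points. First, for $\deg X\geq 3$ in $\P^3$ and $\deg X\geq 9$ in $\Q^3$ the paper does not run a Bogomolov argument by hand but simply invokes Langer's effective restriction theorem \cite[Theorem 5.2]{la} with the explicit bounds $8/3$ and $8$; your ``Bogomolov-type inequality on $X$'' gestures in the right direction but is entangled with the unusable Picard reduction. Second, for plane curves of degree $d\geq 3$ the destabiliser is automatically a line bundle, but your ``direct vanishing argument'' hides real content: the paper needs $h^0(C,L^\star)\leq 1$ for $0\leq\deg L^\star\leq d/2$, which rests on the fact that a smooth plane curve of degree $d\geq 4$ carries no $g^1_m$ with $m\leq d-2$; without such an input the injectivity of the evaluation map from the Euler sequence does not follow. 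Your treatments of $\Q^2$ and of the conic in $\P^2$, and your reduction of the complete-intersection-curve statements to the relative Harder--Narasimhan argument of Theorem~\ref{theo:general-restriction_divisor}, do match the paper and are fine.
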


Note in particular that we have a complete answer to the question of stability of the restriction to \emph{divisors}:
\begin{coroC}
 Let $Y$ be an irreducible compact Hermitian symmetric space. Let $X \subset Y$ be a smooth divisor, not of degree 1.
 Then $\Omega_{Y|X}$ is stable with the only exception of a smooth conic in $\P^2$.
\end{coroC}

Our arguments are very different in the two situations of theorem A and B.
In the case of no increase of Picard groups, we use a new vanishing theorem that may be of independent interest:
\begin{theoD}[Theorem~\ref{theo:general_inequalities}]
Let $Y$ be a compact irreducible Hermitian symmetric space, but not a projective space.
Let $l,p,q$ be integers, with $l>0$, $p>0$, such that $H^q(Y,\Omega^p_Y(l))\neq 0$. 
Then, $$l+q \geq p \frac{c_1(Y)}{\dim(Y)}.$$
If moreover $q>0$, then $$l+q \leq p.$$
\end{theoD}

The proof of this theorem relies on a combinatorial study of the Bott vanishing theorem~\cite{bott}. 
Note that by Akizuki-Kodaira-Nakano theorem \cite{an}, we have $p+q \leq \dim Y$
under the assumption $H^q(Y,\Omega^p_Y(l))\neq 0$ of the theorem. 
On the other hand, by Serre's asymptotic vanishing theorem, we also expect in the 
case $q>0$, an upper bound on the value of $l$. Finally, by the work of Snow 
\cite{snow,snow2}, we know that $q \leq p$. However, to the best of our knowledge, 
it is the first time that the idea that a bound on the \emph{sum} $l+q$ could hold 
appears, addressing the question whether a similar bound could hold for some more 
general varieties $Y$. It should be mentioned that a somewhat related vanishing 
theorem was proved in~\cite{faenzi}.

The connection between vanishing results and stability is as follows: by standard cohomological arguments, 
as used for example in ~\cite{PW},
a subbundle $\F \subset \Omega_{Y|X}$ of the restriction of $\Omega_Y$
contradicting stability yields, under the assumption of the existence of a short resolution of $\Oc_X$ by split vector bundles on $Y$, 
the non-vanishing of some cohomology group $H^q(Y,\Omega^p(l))$, where $q$ is related to the codimension of $X$
and $l$ to the degree of $\F$. Our vanishing theorem implies the desired stability inequality (see Section~\ref{section:hermitian}). 
Remarkably enough, the bounds in Theorem~D are exactly the bounds we need to establish the stability inequality.

In small dimensions, we use tools from projective geometry to make explicit the new line 
bundles that appear on the subvariety $X$. We found it noteworthy that the precise 
stability inequalities ultimately originate in some Bezout like theorems. Those cases 
with increase of the Picard group seem untractable either through vanishing theorems,
or through the technique of covering rational curves as in ~\cite{HW}. 
In a particular case, we need some arguments of representation theory (see Section~\ref{section:small dimensions}).

Our initial motivation for the present work is its implication in terms of height inequalities:
for example, starting with a manifold $W$ with ample canonical bundle $K_W$
(e.g. an hypersurface of large degree in a projective manifold $Y$ of Picard number one), 
if the restriction of its cotangent bundle $\Omega_W$ to a curve $C\,\subset\, X$
is semi-stable then, as $\Omega_C$ is a quotient of ${\Omega_W}_{\mid C}$, 
the slope inequality gives a bound on the height of $C$ with respect to the canonical polarization $K_W$
in terms of the genus of $C$. We hope to push further the present study of ${\Omega_Y}_{\mid C}$
to deal with the stability properties of ${\Omega_W}_{\mid C}$.

\subsection*{Acknowledgements}

We thank Fr\'ed\'eric Han and Laurent Gruson for their help in projective 
geometry. We thank Michel Brion for pointing out a mistake in a previous version of 
this paper. We thank Jie Liu for pointing out the improvement in the use of Langer's
technique for $\Q^3$.
The first-named author is supported by a J. C. Bose Fellowship. Part of 
this project was done in the TATA Institute (Bombay); we thank it for
hospitality.

\section{Vanishing theorems}
\label{vanishing}

Let $Y$ be a compact irreducible Hermitian symmetric space.
We embed $Y$ in some projective space $\P^N$ thanks to a homogeneous ample line bundle $L$.
For a sheaf $\F$ on $Y$ and an integer $\ell$, we will denote
the tensor product $\F \otimes L^{\otimes \ell}$ by $\F(\ell)$.

\subsection{The statement}\label{vanishing31}
The current section is devoted to prove the
\begin{theorem}
\label{theo:general_inequalities}
Let $Y$ be a compact irreducible Hermitian symmetric space, but not a projective space.
Let $l,p,q$ be integers, with $l\geq 0$ and $p> 0$, such that
$H^q(Y,\Omega^p_Y(l))\neq 0$.
\begin{enumerate}
\item Then, $$l+q \geq p \frac{c_1(Y)}{\dim(Y)}.$$
Furthermore, equality holds if and only if 
\begin{itemize}
\item $p=\dim(Y), q=0$ and $l=c_1(Y)$, 
\item or $Y$ is a quadric and $l=0$,
\item or $Y \simeq \Q^4,l=2,p=3,q=1$.
\end{itemize}
\item If moreover $q >0$, then
$$l+q \leq p.$$
\end{enumerate}
\end{theorem}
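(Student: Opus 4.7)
The plan is to apply the Borel--Weil--Bott (BWB) theorem on $Y = G/P$, writing $Y$ as a rational homogeneous variety in its canonical way. Since $Y$ is irreducible Hermitian symmetric, the nilradical $\mathfrak{u}$ of $\mathfrak{p}$ is abelian, so $\Omega^p_Y$ is the homogeneous bundle associated with the Levi representation $\bigwedge^p\mathfrak{u}^*$, and $\Omega^p_Y(l)$ corresponds to $\bigwedge^p\mathfrak{u}^* \otimes \C_{l\omega}$, where $\omega$ is the fundamental weight attached to the marked node of the Dynkin diagram. After decomposing $\bigwedge^p\mathfrak{u}^*$ into irreducible $L$-modules $V_\lambda$, the hypothesis $H^q(Y, \Omega^p_Y(l)) \neq 0$ translates to the existence of a summand $V_\lambda$ and a (necessarily unique) element $w \in W^P$ of length $q$ such that $w(\lambda + l\omega + \rho) - \rho$ is $G$-dominant.

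For part (1), I would show that each simple reflection appearing in a reduced expression for $w$ changes the $\omega$-coefficient of the running weight by a controlled amount, inductively yielding the lower bound $l + q \geq p \cdot c_1(Y)/\dim(Y)$. Pinpointing exactly when this inductive estimate is sharp identifies the three equality situations in the statement; the exceptional case $Y \cong \Q^4$ with $(l,p,q) = (2,3,1)$ arises from the low-dimensional isomorphism $\Q^4 \cong \mathrm{Gr}(2,4)$, under which $\bigwedge^3\mathfrak{u}^*$ decomposes in such a way as to realize the boundary case. For part (2), since the total weight of $\bigwedge^p\mathfrak{u}^*$ in $\omega$ is bounded by $p$ (each of the $p$ factors of $\mathfrak{u}^*$ contributing one), producing cohomology in a positive degree $q$ forces the $\omega$-coefficient of the shifted weight to drop by at least $q$, giving $l + q \leq p$.

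The main obstacle is the case-by-case verification across the six families of compact irreducible Hermitian symmetric spaces other than projective space: the Grassmannians $\mathrm{Gr}(k, n)$, the even and odd quadrics, the Lagrangian Grassmannians, the spinor varieties, and the two exceptional types $E_6/P_1$ and $E_7/P_7$. For the classical families the decomposition of $\bigwedge^p\mathfrak{u}^*$ is indexed by partitions in an explicit rectangle depending on the type, and the element $w$ can be encoded as a combinatorial operation on Young diagrams, making both inequalities transparent; the exceptional spaces reduce to a finite BWB computation. Care is needed to keep the small-dimensional coincidences ($\Q^3$, $\Q^4$, $\mathrm{Gr}(2,4)$) straight, since those are the sole sources of the boundary equality cases beyond the generic extremes $p = \dim Y$, $q = 0$ and the quadric situation $l = 0$.
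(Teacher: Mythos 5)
Your framework is the right one and is in fact the same as the paper's: the paper invokes Snow's combinatorial translation of Bott's theorem, under which $H^q(Y,\Omega^p_Y(l))\neq 0$ is equivalent to the existence of an $l$-admissible partition (type $A$), of an $l$-admissible $C_n$- or $D_n$-sequence of weight $p$ and cohomological degree $q$, or to an entry in Snow's tables for $E_6$ and $E_7$ --- precisely the Borel--Weil--Bott bookkeeping on $\bigwedge^p\mathfrak{u}^*$ that you describe, with the $\Q^4$ equality case indeed coming from $\Q^4\simeq G(2,4)$ and the partition $(2,1)$ with $l=2$, $q=1$.

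The genuine gap is that the central inequalities are asserted rather than proved, and the mechanism you propose for part (1) would not deliver them. You suggest tracking the $\omega$-coefficient of the running weight one simple reflection at a time, each step changing it ``by a controlled amount''; but $l+q\geq p\,c_1(Y)/\dim(Y)$ is not a per-step estimate --- it couples $l$, $q$ and $p$ through the ratio $c_1(Y)/\dim(Y)$, and no individual reflection sees $p$. Concretely, for $G(a,a+b)$ the claim is $l+q\geq p(a+b)/(ab)$ for an $l$-admissible partition of $p$ in an $a\times b$ rectangle with $q$ hook numbers exceeding $l$; the proof requires locating the gap $l$ among the hook numbers of the top row and first column, introducing the level $k=\#\{i \mid c_{i,1}<l\}$, and a chain of estimates on the increments $x_j=\lambda_j-\lambda_{j-1}$, with a separate concavity-in-$l$ argument needed for the spinor Grassmannian and delicate low cases $n\in\{l+1,l+2,l+3\}$. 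None of this is ``transparent,'' and the equality analysis --- which is exactly what isolates $(b^a)$ and the $\Q^4$ boundary case --- depends on knowing where each of these estimates is tight. Part (2) has the same defect in milder form: the heuristic that positive $q$ ``forces the $\omega$-coefficient to drop by at least $q$'' is not an argument, whereas the actual proof (in type $A$) picks a minimal box $x$ with $h(x)>l$ and notes that the $h(x)-1\geq l$ boxes in its arm and leg all have hook number $<l$, so $p-q\geq l$. In short, you have correctly reduced the theorem to combinatorics but omitted the combinatorics, which is essentially the entire content of the proof.
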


The proof of Theorem~\ref{theo:general_inequalities} is given in the next subsections. 
Surprisingly enough, the first item is very intricate,
and the proof of the vanishing theorem in this case entails involved combinatorial arguments.
The second item is much easier.

\subsection{The case of Grassmannians (type $A_n$)}

Fix positive integers $a,b \geq 2$.
Let $\G\,:=\,G(a,a+b)$ be the Grassmannian that parametrizes $a$-dimensional
linear subspaces of a fixed $(a+b)$-dimensional $\C$-vector space $V$.
It is the homogeneous space $G(a,a+b)\,=\,{\rm SU}(a+b)/[{\rm SU}(a+b)\cap
{\rm U}(a)\times {\rm U}(b)]$.
Let $\Oab(1)$ be the Pl\"ucker polarization on $\G$, which is also the positive
generator of $\Pic(\G)$.
The cotangent bundle of $\G$ is denoted by $\Om$.

In case $Y$ is a Grassmannian, by \cite{snow}, the non-vanishing of $H^q(Y,\Omega^p_Y(l))$
implies the existence of a partition of $p$, which is $l$-admissible with cohomological
degree $q$ in the following sense.
Recall first that given a non negative integer $p$, a \emph{partition of $p$} is a sequence of non-increasing natural integers
$(\lambda_1 \geq \lambda_2 \geq \cdots \geq \lambda_n)$ such that $\sum_{i=1}^n p_i=p$. It will be represented by its Young diagram:
each part is represented by $\lambda_i$ boxes on the $i$-th row (from top to bottom), and
these rows are left-justified. The dual partition $\lambda^\vee$ of a partition $\lambda$ is defined
by the fact that its $i$-th part $\lambda^\vee_i$ is equal to the number of boxes in the $i$-th column of the Young
diagram of $\lambda$.
The \emph{hook number} $h_\lambda(i,j)$ of a given box $(i,j)$ in a Young diagram is the number of boxes $i+j-1$
that build the hook based at the given box, as illustrated in the following example for the partition $(6,4,2,2)$
with dual partition $(4,4,2,2,1,1)$:
\tabyoung{6}{4}{
  \cline{2-7}
  \rule{0pt}{6mm} &&&&&&& \\
  \cline{2-7}
  \rule{0pt}{6mm} &6&*&*&* \\
  \cline{2-5}
  \rule{0pt}{6mm} &*&\\
  \cline{2-3}
  \rule{0pt}{6mm} &*&\\
  \cline{2-3}
}

\begin{definition}[\cite{snow}]\label{defi-admissible}
Let $\lambda$ be a partition and $l$ an integer. We say that $\lambda$ is \emph{$l$-admissible} if no hook number of
$\lambda$ is equal to $l$. The ($l$-){\it cohomological degree} of an $l$-admissible partition is the number of hook numbers
which are greater than $l$. The hook number of the partition $\lambda$ at the box $(i,j)$ will be denoted by
$h_\lambda(i,j)$.
\end{definition}

Hence, in the case of Grassmannians, the first part of Theorem~\ref{theo:general_inequalities} is equivalent to the following
combinatorial statement, which we now prove:
\begin{proposition}\label{prop-partition}
Assume that $a,b \,\geq\, 2$.
Let $l$ be a non-negative integer.
Let $\lambda$ be a $l$-admissible partition of $p$ which Young diagram fits in a rectangle $a\times b$, with
$l$-cohomological degree $q$.
Then, we have the inequality $$l+q\,\geq\, \frac pa + \frac pb\, ,$$ with equality
holding if and only if
either $\lambda=(0)$ and $l=q=0$, or $\lambda\,=\,(b^a)$, $l\,=\,a+b$ and $q\,=\,0$, or $\lambda \,=\, (2,1)$,
$l\,=\,2$ and $q\,=\,1$.
\end{proposition}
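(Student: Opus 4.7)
The plan is to translate the combinatorial statement into the abacus model for partitions and argue by a base-case analysis plus induction on $a + b$. Encode $\lambda \subseteq a \times b$ by its beta-set $S = \{\lambda_i + a - i : 1 \leq i \leq a\} \subseteq \{0, 1, \dots, a+b-1\}$; the cells of $\lambda$ correspond bijectively to pairs $(s,t) \in S \times S^c$ with $s > t$, and the hook length at the corresponding cell equals $s - t$. The $l$-admissibility condition becomes: whenever $s \in S$ and $s \geq l$, one also has $s - l \in S$. Equivalently, arranging $S$ on an abacus with $l$ runners, each runner $r$ carries an initial segment of $k_r$ beads, with $\sum_r k_r = a$. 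Both $p$ and $q$ then become explicit piecewise-linear functions of the bead-count vector $(k_0, \dots, k_{l-1})$, turning the target inequality into a polynomial inequality on an integer lattice polytope.

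The base cases are $a = 2$ (for any $b \geq 2$) and, symmetrically, $b = 2$. For $a = 2$, every $l$-admissible $\lambda = (\lambda_1, \lambda_2)$ splits into two subfamilies, depending on whether $\lambda_1 - \lambda_2 = l - 1$ or $\lambda_1 \leq l - 2$, and the inequality is checked by a direct computation in each subfamily. This reveals the exceptional equality $\lambda = (2,1)$, $l = 2$, $q = 1$ as occurring only when $a = b = 2$, while the full-rectangle equality $\lambda = (b^a)$, $l = a + b$ appears inside the small-partition subfamily.

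For the inductive step with $a, b \geq 3$: if $\lambda_1 < b$, then $\lambda \subseteq a \times (b-1)$, so the inductive hypothesis gives $l + q \geq p(a+b-1)/(a(b-1))$, and the elementary estimate $(a+b-1)/(a(b-1)) > (a+b)/(ab)$ yields the desired inequality strictly; symmetrically when $\lambda'_1 < a$. The critical remaining case is $\lambda_1 = b$ and $\lambda'_1 = a$, where the first row and column of the rectangle are both full. Here, if $l \geq a + b$, every hook is at most $a+b-1 < l$, forcing $q = 0$ and $l + q \geq a + b \geq p(a+b)/(ab)$, with equality exactly at the full rectangle. When $l \leq a + b - 2$ (the value $l = a+b-1$ is forbidden by admissibility because $h(1,1) = a+b-1$), I would exploit the abacus parametrization and run a case analysis according to how many of the $k_r$ lie strictly between $0$ and their maximum allowed value, reducing the problem to a bounded set of extremal configurations handled directly from the closed-form expressions for $p$ and $q$; intermediate configurations can be attacked by bead-move arguments that decrease both $p$ and $q$ in a compatible way.

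The main obstacle is precisely this last abacus analysis in the full first-row-and-column case: since $q$ is piecewise-linear rather than polynomial in the bead counts, the inequality has to be checked on each linearity region separately, and keeping track of strictness outside the two listed exceptions requires delicate bookkeeping. This is the source of the ``surprisingly intricate'' combinatorics flagged by the authors. Once the induction and the base case are complete, retracing when each reduction is tight singles out exactly the two equality cases: the full rectangle $\lambda = (b^a)$ with $l = a+b$, which survives every inductive step, and the partition $(2,1)$ with $l = 2$, which only arises from the $a = b = 2$ base.
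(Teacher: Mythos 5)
Your framework is genuinely different from the paper's: you pass to the beta-set/abacus model (correctly identifying $l$-admissible partitions as $l$-cores, with hooks given by differences $s-t$, $s\in S$, $t\in S^c$), and you set up an induction on $a+b$ whose easy reductions are sound. Indeed, if $\lambda_1<b$ then $p,q,l$ are unchanged when $\lambda$ is viewed inside $a\times(b-1)$, and $\frac{a+b-1}{a(b-1)}>\frac{a+b}{ab}$ gives strict inequality; and if the first row and column are full and $l\ge a+b$, then $q=0$ and $l+q\ge a+b\ge p\frac{a+b}{ab}$ with equality only at the full rectangle. The paper instead works directly with the increments $x_i=\lambda_i-\lambda_{i-1}$, locates $l$ as a ``gap'' in the top row, and derives the inequality from a comparison between the initial increments and the final ones (its inequalities (\ref{eq:0})--(\ref{eq:2})). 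So if your plan were completed it would be a real alternative route.

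However, as written there is a genuine gap: the entire content of the theorem is concentrated in the case you defer, namely $\lambda_1=b$, $\lambda'_1=a$ and $l\le a+b-2$, and there you give no argument. ``Run a case analysis according to how many of the $k_r$ lie strictly between $0$ and their maximum'' and ``bead-move arguments that decrease both $p$ and $q$ in a compatible way'' are declarations of intent, not proofs. In particular, the bead-move idea is not even obviously workable: for an inequality of the form $l+q\ge cp$ with $c=\frac{a+b}{ab}$, a move that decreases both $p$ and $q$ preserves the inequality only if $\Delta q\le c\,\Delta p$, and you establish no such quantitative control; nor do you exhibit the ``closed-form expressions for $p$ and $q$'' on the linearity regions or verify the extremal configurations. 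The base case $a=2$ is likewise only asserted (``checked by a direct computation in each subfamily''). Since this deferred case is precisely where the paper's proof does all its work --- the choice of the level $k$, the index $i$ of the gap, and the chain of estimates bounding $a'-p/b'$ from below by $\min(k,i-1)/2$, followed by the delicate equality analysis that isolates $(b^a)$ and $(2,1)$ --- the proposal cannot be accepted as a proof. To salvage it you would need to actually write down $p$ and $q$ as functions of the bead counts $(k_0,\dots,k_{l-1})$, prove the inequality region by region, and trace strictness through every reduction; nothing in the current text guarantees that this computation closes.
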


\begin{proof}

Given a partition $\lambda$, we denote by $a(\lambda)$ the first part of $\lambda$, by $b(\lambda)$ its length, by
$p(\lambda)$ the sum of its parts, and by $q(\lambda)$ its cohomological degree. We assume that $a=a(\lambda)$
and $b=b(\lambda)$.
If $q=0$, we have $\frac pa + \frac pb \leq b+a \leq l$ and the first inequality is an equality if and only if
$\lambda=(b^a)$. We are in the first case of the proposition.

%\vspace{3mm}

We assume from now on that $q>0$.
Without loss of generality, we may also assume that $a \geq b$ and that the proposition is proved for partitions
$\lambda'$ such that $a(\lambda')<a$ or $b(\lambda')<b$.

Given an $l$-admissible partition $\lambda$, we denote by $\Delta(\lambda)$ the number
$$\Delta(\lambda):=l+q(\lambda)-\frac {p(\lambda)}{a(\lambda)} - \frac {p(\lambda)}{b(\lambda)}.$$
Our aim is to prove that $\Delta(\lambda) \geq 0$. It will be convenient to prove this inequality
repeatedly replacing $\lambda$ by a combinatorially simpler $l$-admissible partition $\lambda'$
such that $\Delta(\lambda') \leq \Delta(\lambda)$. The main steps of the proof are illustrated
in Picture 1.

Let us first assume that $h_\lambda(2,1)>l$. In this case, we consider the partition $\mu$ obtained
removing the first column: namely, we set $\mu_i=\max(\lambda_i-1,0)$. By induction, we know that
$\Delta(\mu) \geq 0$. Now, we have
$a(\mu)=a-1, b(\mu) \leq b, p(\mu)=p-b$ and $q(\mu) \leq q-2$. It follows:
$$
0 \leq \Delta(\mu) < l+(q-2)-\frac{p-b}{a}-\frac{p-b}{b}=l+q-\frac pa-\frac pb + \frac ba - 1 \leq \Delta(\lambda),
$$
where the last inequality follows from our assumption $a \geq b$.

%\vspace{3mm}

From now on, assume that $h_\lambda(2,1)<l$.
Set $k=\lambda_{q+1}^\vee$. We consider the partition $\mu$ defined by (see Picture 1)
$$
\left \{
\begin{array}{rcl}
 \mu_1=\lambda_1 \\
 \mu_i = \lambda_2 & \mbox{ if } & 2 \leq i \leq k \\
 \mu_i = \lambda_k & \mbox{ if } & k+1 \leq i \leq b
\end{array}
\right .
$$
For all integers $i,j$, we have $h_\mu(i,j) \geq h_\lambda(i,j)$, and we have
$h_\mu(1,q+1)=h_\lambda(1,q+1)<l$ and $h_\mu(2,1)=h_\lambda(2,1)<l$. It follows that
$\mu$ is also $l$-admissible.
We have $a(\mu)=a, b(\mu)=b, p(\mu) \geq p, q(\mu)=q$, so $\Delta(\mu) \leq \Delta(\lambda)$,
so that it is enough to prove the Proposition for $\mu$.

We now consider two cases. The first case is when $\mu_2=q$. Then
the Young diagram of $\mu$ is described by the three integers
$a,b$ and $c$ with $a\geq b\geq 2$ and $a\geq c\geq 1$:
\tabyoung{12}{8}{
   \cline{2-13}
   \bd{9} &$a+b-1$  && $\cdots$ &&$a+b-c$ & $a-c$ & &&  &&  & $1$ &  \\
   \cline{2-13}
   \bd{9} & $b+c-2$ &&  && $b-1$ \\
   \cline{2-6}
   \bd{9} &  &&&&  \\
   \cline{2-6}
   \bd{9} &  &&  &&  \\
   \cline{2-6}
   \bd{9}  & $c$  &&  && 1\cr
   \cline{2-6}
}
Then, as all integers between $1$ and $a-c$ are hook numbers, and
similarly
for integers between $1$ and $b+c-2$, the integer $l$ has to fulfill the
following inequalities
$$ l\geq a-c+1,\ \ l\geq b+c-1.$$
Since the cohomological degree $q$ is positive, then, as all
numbers between $a+b-c$ and $a+b-1$ are hook numbers,
$$l\leq a+b-c-1.$$
There are exactly $c$ boxes with hook number bigger than $l$, all lying
in the first row: hence, $q=c$
and $$l+q\geq (a-c+1)+c\geq a+1.$$
Writing $p=a+(b-1)c=ab-(b-1)(a-c)$ we derive
$$\frac{p}{a}+\frac{p}{b}=a+1+(b-1)(\frac{c}{a}-\frac{a-c}{b}).$$
But
$$\frac{a-c}{b}-\frac{c}{a}=\frac{a(a-c)-bc}{ab}\geq
\frac{ac-bc}{ab}\geq 0,$$
where we have used $a-c\geq l-b+1\geq c$.
This proves $$\frac{p}{a}+\frac{p}{b}\leq l+q.$$
In case of equality, $(b-1)(ac-bc)=0$ so that $a=b$. Furthermore, all
previously used inequalities are equalities
$$l=a-c+1=b+c-1=a+b-c-1$$
leading to $c=1$ and $a=b=2$.

%\vspace{3mm}

The second case is when $\mu_2>q$. Note that the inequalities $h_\mu(2,1)<l$
and $h_\mu(1,1)>l$
imply that $\mu_2<\mu_1$. Similarly, $\mu_{k+1} < \mu_2$.
If $h_\mu(1,q)>l+1$, we may consider the $l$-admissible partition $\nu$
obtained by setting $\nu_1=\mu_1-1$, $\nu_{k+1}=\mu_2$, and for
$i \not \in \{1,k+1\},\nu_i=\mu_i$. We have
$p(\nu) \geq p(\mu),a(\nu)=a-1,b(\nu)=b$ and $q(\nu)=q$,
so $\Delta(\nu)<\Delta(\mu)$ and the Proposition is proved
by induction in this case.
If $h_\mu(1,q)=l+1$, then $h_\mu(1,1)=l+q$, so $l+q=a+b-1$. Since
$p(\mu) \leq ab-b$, the inequality $\Delta(\mu) > 0$ is also proved in this
case.

\psset{unit=4mm}
\psset{linewidth=.1}

$$
\begin{array}{c}

\begin{pspicture}(25,10)
 \psframe*[linecolor=gray](6,10)(0,9)
 \psline(0,0)(0,10)(25,10)(25,9)(14,9)
 \psline(14,9)(14,8)
 \pscurve(14,8)(9,7.2)(7,5)
 \psline(7,5)(6,5)
 \pscurve(6,5)(3,3)(1,0)
 \psline(1,0)(0,0)
 \put(20,4){{\Huge $\lambda$}}
\end{pspicture}

\\

\begin{pspicture}(25,10)
 \psframe*[linecolor=gray](6,10)(0,9)
 \psline(0,10)(25,10)(25,9)(14,9)(14,5)(6,5)(6,0)(0,0)(0,10)
 \put(6.2,9.25){{\scriptsize $\bullet \longleftarrow h_\mu(1,q+1)=h_\lambda(1,q+1)$}}
 \put(0.2,8.25){{\scriptsize $\bullet \longleftarrow h_\mu(2,1)=h_\lambda(2,1)$}}
 \put(20,4){\Huge{$\mu$}}
\end{pspicture}

\\

\begin{pspicture}(25,10)
 \psframe*[linecolor=gray](6,10)(0,9)
 \psline(0,0)(0,10)(21,10)(21,9)(14,9)(14,1)(6,1)(6,0)(0,0)
 \put(6.2,9.25){{\scriptsize $\bullet \longleftarrow h_\nu(1,q+1)=h_\lambda(1,q+1)$}}
 \put(5.2,9.25){{\scriptsize $\bullet$}}
 \put(4.8,8.25){{ $\uparrow$}}
 \put(4.5,7.5){{\scriptsize $h_\nu(1,q)=l+1$}}
 \put(20,4){\Huge{$\nu$}}
\end{pspicture}

\\
Picture 1
\end{array}
$$
\end{proof}

We now prove the second part of Theorem~\ref{theo:general_inequalities}.

\begin{proposition}[Second part of Theorem~\ref{theo:general_inequalities} for Grassmannians]
\label{prop:An-2}
Let $l$ be a positive integer.
Let $\lambda$ be a $l$-admissible partition of $p$ 
with $l$-cohomological degree $q> 0$.
Then, we have the inequality $$l+q \leq p.$$ 
Moreover, if the equality $p=l + q$ holds, then $\lambda$ is a hook (i.e.,
its shape is $(a,1^{b-1})$).
\end{proposition}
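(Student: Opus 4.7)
My plan is to recast $\lambda$ via its first-column hook-lengths: set $B := \{\beta_1, \dots, \beta_n\} \subset \Z_{>0}$ with $\beta_i := \lambda_i + n - i$, where $n$ is the number of non-zero parts. Then $l$-admissibility amounts to $l \notin B$ together with the closure property ``$\beta > l \Rightarrow \beta - l \in B$'', and the multiset of hook-numbers of $\lambda$ is parametrized by the pairs $(\beta, m)$ with $\beta \in B$, $0 \le m < \beta$, $m \notin B$, of length $\beta - m$. Since admissibility forbids hook-length exactly $l$, one has $p = N + q$ where $N$ counts hook-numbers of length $<l$, reducing the target inequality to $N \ge l$.

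Decompose $B = B_+ \sqcup B_-$ with $B_+ := B \cap (l, \infty)$ and $B_- := B \cap (0, l)$. By hypothesis $B_+ \neq \emptyset$; let $\beta_\star := \min B_+$. Admissibility and minimality force $r := \beta_\star - l \in B_-$, in particular $\beta_\star < 2l$. I would then produce two disjoint families of pairs contributing to $N$: \emph{(a)} the pairs $(\beta_\star, m)$ with $m$ a hole in $(r, r + l)$; \emph{(b)} the pairs $(\beta, 0)$ for $\beta \in B_-$ (using $0 \notin B$). A bead lying in the window $(r, r+l)$ cannot be in $B_+$ without contradicting the minimality of $\beta_\star$, hence lies in $B_- \cap (r, l)$, which has at most $|B_-| - 1$ elements. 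Therefore (a) contributes at least $(l-1) - (|B_-|-1) = l - |B_-|$, (b) contributes exactly $|B_-|$, and summing yields $N \ge l$.

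For the equality case I would trace each inequality. Tightness in (a) pins down $r = \min B_-$. The additional $B_-$-contributions beyond family (b) force $B_- = \{1, 2, \dots, |B_-|\}$, since $\sum_{\beta \in B_-}|\{0 \le m < \beta,\, m \notin B\}| \ge |B_-|$ with equality iff each $\beta \in B_-$ equals its rank. Finally, any $\beta \in B_+ \setminus \{\beta_\star\}$ contributes at least one more pair, thanks to the key observation that $B$ contains no positive multiple of $l$: indeed $kl \in B$ with $k \ge 2$ would force $(k-1)l \in B$, and iterating gives $l \in B$, a contradiction. Thus the unique multiple of $l$ inside $(\beta - l, \beta)$ is always a hole, supplying an uncounted pair, so equality forces $B_+ = \{\beta_\star\}$. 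The three constraints together determine $B = \{1, 2, \dots, c,\, l+1\}$ with $c := |B_-|$, which converts back via $\lambda_i = \beta_i + i - n$ to $\lambda = (l + 1 - c,\, 1^c)$, the desired hook.

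The main obstacle is the equality characterization, specifically the bookkeeping for extra pairs coming from $B_+ \setminus \{\beta_\star\}$, which rests on the ``no multiples of $l$'' observation. With that lemma in hand both the inequality and the equality case fall out quickly, consistent with the paper's remark that this second part is much easier than the first.
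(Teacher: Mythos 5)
Your proof is correct, but it takes a genuinely different route from the paper's. The paper argues directly on the Young diagram: it picks a box $x$ with $h(x)>l$ that is minimal for this property (every box south-east of it has hook number $<l$); by admissibility and minimality the $h(x)-1\geq l$ boxes strictly in the arm and leg of $x$ all have hook number $<l$, which gives $p-q\geq l$ in two lines, and for equality one checks that $x$ must sit in the first row and first column and that every box with small hook lies on the hook of $x$. Your abacus argument needs more set-up (the bijection between boxes and bead--hole pairs $(\beta,m)$, and the $l$-core closure property $\beta>l\Rightarrow\beta-l\in B$), but every step checks out: $r=\beta_\star-l$ does land in $B_-$ by minimality, the window $(r,r+l)$ contains at least $l-|B_-|$ holes because its beads all lie in $B_-\cap(r,l)$, and adding the $|B_-|$ first-column pairs gives $N\geq l$. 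The equality bookkeeping is also sound; in particular your lemma that $B$ contains no positive multiple of $l$ correctly manufactures an extra small hook in every row of $B_+\setminus\{\beta_\star\}$, forcing $|B_+|=1$. What your approach buys is a sharper conclusion: you pin down $B=\{1,\dots,c,l+1\}$, i.e. $\lambda=(l+1-c,1^c)$ with $p=l+1$ and $q=1$, whereas the paper only asserts that $\lambda$ is a hook. What it costs is reliance on the (standard, but here unproved) hook-length/beta-set correspondence, where the paper's argument is self-contained and considerably shorter.
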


\begin{proof}
Let $Y(\lambda)\,:=\,\{(i,j)\,\mid\, j \leq \lambda_i \}\,
\subset\, \mathbb{N}^2$ be the Young diagram of $\lambda$. For $x \,\in\, Y(\lambda)$, we
abbreviate the hook number $h_\lambda(x)$ of $\lambda$ at $x$
by $h(x)$. We have $p=\# Y(\lambda)$ and
$q=\# \{ x \in Y(\lambda)\ |\ h(x) > l \}$.
Since $q>0$, let $x \in Y(\lambda)$ such that $h(x)>l$. Moreover, we can assume that
$x$ is minimal for this property, namely that $h(y)<l$ if $y$ is south-east from $x$.
By definition of $h(x)$, there are $h(x)-1$ elements $z \in Y(\lambda)$ which are
either on the same row as $x$ on its right, or under $x$ in the same column.
For these elements, we have $h(z)<l$. This implies that
$p-q = \#\{y \in Y(\lambda)\ |\ h(y)<l \} \geq h(x)-1 \geq l$.

We now deal with the case of equality (that will not be used in the sequel).
If the equality $p=l + q$ occurs, with $q>0$ as above, then we first show that $x$ is on the first row.

\begin{center}
\begin{tabular}{cc}
\tabyoungnoncentre{5}{4}{
  \cline{2-6}
  \bd{6} & &&&& &  \\
  \cline{2-6}
  \bd{6} & &&&& $x''$  \\
  \cline{2-6}
  \bd{6} & & $x$ &&& $x'$  \\
  \cline{2-6}
  \bd{6} & & \\
  \cline{2-3}
  \bd{6} & & \\
  \cline{2-3}
  \bd{6} & & \\
  \cline{2-3}
  \bd{6} & \\
  \cline{2-2}
}
&
\tabyoungnoncentre{9}{4}{
\cline{2-10}
  \bd{6} & &&&&&&&& &  \\
  \cline{2-10}
  \bd{6} & &&&&&&&& $x''$  \\
  \cline{2-10}
  \bd{6} & & $x$ &&&& $x'$  \\
  \cline{2-7}
  \bd{6} & & \\
  \cline{2-3}
  \bd{6} & & \\
  \cline{2-3}
  \bd{6} & & \\
  \cline{2-3}
  \bd{6} & \\
  \cline{2-2}
}
\\
\ \\
Case $\lambda_{i-1} = \lambda_i$ & Case $\lambda_{i-1} > \lambda_i$
\end{tabular}
\end{center}

If $x$ is not on the first row, then the hook number of the box $x''$, very right on the row over that of $x$
not in the same row of $x$ neither on the same column, is $2$ or $1$. 
Hence this box contributes to  $\{ y \in Y(\lambda)\ |\ h(y) < l \}$ and therefore, $p-q>l$.
In the same way, we can show that $x$ is on the first column and that all the boxes with hook number 
smaller than $l$ are on the hook of $x$. Therefore $\lambda$ is a hook.
\end{proof}

\subsection{The case of quadrics (type $B_n$ or $D_n$)}
\label{sec:quadric}
Let $Y$ be a non singular quadric hypersurface of dimension $n$ 
with its natural polarization $\Oc_Y(1)=\Oc_{\P^{n+1}}(1)_{\mid Y}$.
It is the homogeneous space $$Y\,=\,{\rm SO}(n+2)/({\rm SO}(n)\times
{\rm SO}(2))\, .$$
{}From the adjunction formula, $c_1(Y)=n+2-2=\dim Y$.
Recall a theorem of Snow.
\begin{theorem*}[{\cite[page 174]{snow}}]
\label{theo:snow}
Let $Y$ be a non singular quadric hypersurface of dimension $n$.
If $H^q(Y,\Omega^p_Y(l))\neq~0$, then 
\begin{itemize}
  \item either $p=q$ and $l=0$,
  \item or $q=n-p$ and $l=-n+2p$,
  \item or $q=0$ and $l>p$,
  \item or $q=n$ and $l<-n+p$.
\end{itemize}
\end{theorem*}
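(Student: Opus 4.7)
My plan is to prove Snow's theorem by induction on $p$, using two short exact sequences associated with the embedding $i : Y = \Q^n \hookrightarrow \P^{n+1}$. The conormal sequence $0 \to \O_Y(-2) \to \Omega_{\P^{n+1}}|_Y \to \Omega_Y \to 0$, after taking $p$th exterior powers and twisting by $\O_Y(l)$, yields
$$0 \to \Omega^{p-1}_Y(l-2) \to \Omega^p_{\P^{n+1}}|_Y(l) \to \Omega^p_Y(l) \to 0,$$
while the restriction sequence
$$0 \to \Omega^p_{\P^{n+1}}(l-2) \to \Omega^p_{\P^{n+1}}(l) \to i_*\Omega^p_{\P^{n+1}}|_Y(l) \to 0$$
reduces the middle term of the former to cohomology on $\P^{n+1}$ controlled by Bott's formula.

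The base case $p=0$ is classical: $H^q(Y, \O_Y(l))$ vanishes for $0 < q < n$ by Kodaira vanishing (for $l > 0$) and its Serre dual (for $l < -n$), while for $-n \leq l \leq 0$ only the Hodge class at $l=q=0$ survives. This exactly matches the four cases of the statement for $p = 0$.

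For the inductive step, I would assume the theorem for $(p-1, l-2)$ and every $q$, then read off the long exact sequence of the conormal sequence above. Bott's formula gives that $H^q(\P^{n+1}, \Omega^p_{\P^{n+1}}(l)) \neq 0$ precisely when either $q=p, l=0$, or $q=0, l>p$, or $q = n+1, l < p-n-1$; combining this with the known cohomology of $\Omega^{p-1}_Y(l-2)$, one checks that the only triples $(p, l, q)$ admitting a nonzero $H^q(Y, \Omega^p_Y(l))$ are those listed in the four cases, each contributed by a specific one of the two outer terms in the long exact sequence.

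The main obstacle will be the middle case $q = n-p, l = -n+2p$: there both neighboring terms in the long exact sequence can contribute, and one must verify that the connecting class is genuinely nonzero rather than potentially killed. I would handle this by exploiting Serre duality on $Y$ (with $K_Y = \O_Y(-n)$), which sends $(p,l,q) \mapsto (n-p,-l,n-q)$ and preserves this case, so inductive consistency forces nonvanishing. A cleaner alternative is to realize $Y = \mathrm{SO}(n+2)/P$ as a homogeneous space and invoke Borel--Weil--Bott directly on the $P$-representation defining $\Omega^p_Y(l)$: compute $\lambda + \rho$, test regularity, and read off which of the four cases the unique Weyl chamber representative produces. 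This bypasses the induction entirely, at the cost of a small case split between types $B_n$ and $D_n$.
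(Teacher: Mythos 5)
The paper does not prove this statement: it is quoted verbatim from Snow, whose proof goes through Bott's theorem for homogeneous bundles on $Y={\rm SO}(n+2)/P$ --- essentially your ``cleaner alternative'', which is the right route but which you only gesture at (note that $\Omega^p_Y$ is irreducible except for $p=n/2$ with $n$ even, where it splits into two summands). Your primary inductive argument has a genuine gap. The two long exact sequences only bound the nonvanishing locus of $H^q(Y,\Omega^p_Y(l))$ by the union of the loci where the neighbouring terms are nonzero, and that union is strictly larger than the four listed cases; at the excess triples both outer terms are nonzero and an exact cancellation must be proved. Concretely, take $l=2$, $q=p-1$ with $2\le p\le n$ and $n\ne 2p-2$: this triple is in none of the four cases, yet the restriction sequence gives $H^{p-1}(Y,\Omega^p_{\P^{n+1}}(2)|_Y)\cong H^p(\P^{n+1},\Omega^p_{\P^{n+1}})=\C$ (both flanking Bott groups vanish), and the conormal sequence reads
$$H^{p-1}(Y,\Omega^{p-1}_Y)\longrightarrow H^{p-1}(Y,\Omega^p_{\P^{n+1}}(2)|_Y)\longrightarrow H^{p-1}(Y,\Omega^p_Y(2))\longrightarrow H^{p}(Y,\Omega^{p-1}_Y)=0\, ,$$
with the first group nonzero by Hodge theory. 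So the asserted vanishing of $H^{p-1}(Y,\Omega^p_Y(2))$ is equivalent to the surjectivity of the first arrow (cupping the Hodge class with the conormal direction $dF$), which your bookkeeping never establishes. The same problem recurs at $H^0(Y,\Omega^p_Y(p))$, which injects into $H^1(Y,\Omega^{p-1}_Y(p-2))$, nonzero for $p=2$, and at the Serre-dual triples. The induction therefore does not close without computing specific maps.

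A secondary point: the statement is only an implication (``if $H^q\ne 0$ then one of four cases holds''), so the issue you flagged --- verifying that the connecting class is genuinely nonzero in the case $q=n-p$, $l=-n+2p$ --- is not actually needed; the direction that \emph{is} needed is vanishing at the borderline triples above, precisely where your argument is silent. Also, in the base case $H^n(Y,\O_Y(-n))\cong H^0(Y,\O_Y)^{\ast}\neq 0$ (this is the second case with $p=0$), so your claim that only $l=q=0$ survives for $-n\le l\le 0$ is inaccurate, though harmless for the implication being proved.
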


As when $q=0$ and $0<p<n$, the inequality $l>p$ holds, 
the cotangent bundle $\Omega_Y$ of the quadric is stable as soon as the Picard group
of the quadric is the restriction of that of $\P^{n+1}$, for example when $n\geq 3$.
Theorem~\ref{theo:general_inequalities} follows for quadrics by checking the above cases.

\subsection{The case of Lagrangian Grassmannians (type $C_n$)}

In this case, $Y$ parametrizes $n$-dimensional Lagrangian
subspaces of $\C^{2n}$ equipped with the standard symplectic form.
It is the homogeneous space $Y\,=\,{\rm Sp}(2n, {\mathbb C})/{\rm U}(n)$.
By \cite{snow2}, the non-vanishing of $H^q(Y,\,\Omega^p_Y(l))$ amounts to the existence 
of an $l$-admissible $C_n$-sequence of weight $p$ and cohomological degree $q$, in the
following sense:
\begin{definition}
Fix $l,n\in\mathbb{N}$ with $l>0$.
A $n$-uple of integers $(x_i)_{1 \leq i \leq n}$ will be called an
\emph{$l$-admissible $C_n$-sequence} if
\begin{itemize}
\item $\forall\ 1\leq i\leq n$, $|x_i| = i$
\item $\forall\ i \leq j, x_i + x_j \not = 2l$.
\end{itemize}
Its weight is defined to be $$p\,:=\,\sum_{x_i>0} x_i$$ and its cohomological degree is
$q\, :=\, \# \left \{ (i,j)\,\mid\, i \leq j \mbox{ and } x_i + x_j > 2l \right \}$.
\end{definition} 

\begin{notation}
\label{nota:x+}
Given an integer $x$, we denote $x^+:=Max(0,x)$. Therefore, we have $p\,=\,\sum_i x_i^+$.
The set of all $(u,v)\,\in\, \mathbb{N}^2$ such that $u\leq v$ and $x_u+x_v>2l$
will be denoted by $Q$. The cardinality $\#\, Q$ will be denoted by $q$.
Moreover we adopt the following convention: if $\cC$ is a condition on $(u,v)$, then $Q(\cC)$ will denote
the subset of $Q$ consisting of pairs satisfying $\cC$. For example, given an integer $v_0$, the set
$Q(v=v_0)$ consists of all pairs $(u,v)$ in $Q$ such that $v=v_0$.
\end{notation}

Since this excludes the case of $Y$ being a projective space or a quadric, which occurs when $n \leq 2$,
the first part of Theorem~\ref{theo:general_inequalities} amounts to the following proposition in this case:
\begin{proposition}[First part of Theorem~\ref{theo:general_inequalities} for Lagrangian Grassmannians]
\label{prop:Cn-1}
Let $x=(x_i)$ be an $l$-admissible $C_n$-sequence of weight $p$ and cohomological degree $q$, with $n \geq 3$. Then
$$ l+q \geq \frac{2p}{n} \ , $$
with equality occurring if and only if $x_i=i,l=n+1$ and $q=0$, or $p=q=l=0$.
\end{proposition}

\begin{proof}
Let $t = \# \{ i \ | \ x_i > l \}$.
Snow classified the cases where $l=1$ \cite[Theorem 2.2]{snow2}.
In fact, the combinatorics are quite simple in this case since for a $2$-admissible sequence $(x_i)$
we have $x_1=-1$ and $x_i<0 \Rightarrow x_{2+i}<0$. Thus, such a sequence satisfies $x_{2i+1}=-(2i+1)$,
$x_{2i} = 2i$ for $i \leq t$, and $x_{2i} = -2i$ for $i>t$.
We then have $p=t(t+1)$ and $q=t^2$.
Since $t \leq \frac n2$, we have
$$ \frac{2p}{n} = \frac{2t(t+1)}{n} \leq t+1 \leq t^2+1 = q+l \ .$$
Moreover, if the equality holds, then $2t=n$ and $t=0$ or $t=1$, contradicting $n \geq 3$.
Therefore, the proposition is true in this case.

We now assume that $l \geq 2$. Given $i,j$, if $x_i > l$ and $x_j>l$, then
evidently $x_i + x_j > 2l$. Therefore,
$$q \,\geq\, \frac{t(t+1)}{2} \,\geq\, 2t-1\, .$$ On the other hand, we have
$p \leq \frac{l(l-1)}{2} + tn$. If $2p \geq (l+q)n$, then $$l(l-1) + 2 tn 
\,\geq\, (2t+l-1)n\, .$$ Since $l>1$, this implies $n \leq l$, and so $q=0$.

If $l=n$, then $x_n=-n$, so we have $2p \leq n(n-1)$, therefore,
$\frac{2p}{n} \leq n-1 < l$, and the proposition is true.

If $l>n$, since $2p \leq n(n+1)$, we get that $\frac{2p}{n} \leq n+1 \leq l$,
and if the equality holds then $l=n+1$ and $p=\frac{n(n+1)}{2}$.
\end{proof}

We now prove the second part of Theorem~\ref{theo:general_inequalities}:
\begin{proposition}[Second part of Theorem~\ref{theo:general_inequalities} for Lagrangian Grassmannians]
\label{prop:Cn-2}
Let $(x_i)$ be an $l$-admissible $C_n$-sequence of weight $p$ and cohomological degree $q$, with $n \geq 3$
and $q>0$. Then
$$ l+q \leq p \ . $$
Moreover, the equality $p=q+l$ holds if and only if 
$$x=(-1,-2,\ldots,-l,l+1,-(l+2),-(l+3),\ldots,-n)$$
with $p=l+1$ and $q=1$.
\end{proposition}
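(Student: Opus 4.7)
The plan is to establish the stronger pointwise bound
$$q_v := \#\{u \leq v : x_u + x_v > 2l\} \leq v - l \qquad \text{for every } v \in T,$$
where $P := \{i : x_i > 0\}$ and $T := P \cap (l, n]$. Summing gives
$$q = \sum_{v \in T} q_v \leq \sum_{v \in T}(v-l) = \Bigl(\sum_{v \in T} v\Bigr) - |T|\, l \leq p - |T|\, l,$$
using $T \subseteq P$ and positivity of elements of $P$. Every pair $(u,v) \in Q$ satisfies $v \in T$: if $x_v = -v$ then $x_u + x_v \leq u - v \leq 0 < 2l$, so $x_v = v$; then $x_u \leq u \leq v$ together with $x_u > 2l - v$ forces $v > l$. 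Hence $q > 0$ gives $|T| \geq 1$, and $l + q \leq p$.

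The core of the argument is the pointwise bound. I would exploit three consequences of admissibility: (a)~$l \notin P$, since $2 x_l = 2l$ is forbidden; (b)~for $1 \leq u \leq l-1$, the pair $\{u, 2l-u\}$ is not contained in $P$; and, most crucially, (c)~if $v' \in P$ with $v' > 2l$ then $v' - 2l \in P$, otherwise $x_{v'-2l} + x_{v'} = 2l$. The pointwise bound splits according to whether $v \leq 2l$ or $v > 2l$.

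For $l < v \leq 2l$, the identity $q_v = |P \cap [2l-v+1,v]|$ holds. Excluding $l$ by (a) and using the pairing (b) between $[2l-v+1,l-1]$ and $[l+1,v-1]$ gives $|P \cap ([2l-v+1,l-1] \cup [l+1,v-1])| \leq v - l - 1$; adding $\{v\} \subseteq P$ yields $q_v \leq v - l$. The harder case is $v > 2l$, where a direct count yields $q_v = |P \cap [v-2l, v]| + (v - 2l - 1)$, so one must prove $|P \cap [v-2l, v]| \leq l + 1$. This is where (c) plays the decisive role: it implies that $P$ is $2l$-periodic in the sense that $P = \{u + 2lk : u \in P_0,\ k \geq 0\} \cap [1,n]$ with $P_0 := P \cap [1, 2l]$. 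Since the window $[v - 2l, v]$ consists of $2l + 1$ consecutive integers, each residue class modulo $2l$ appears exactly once, with the sole exception of the residue of $v$ which appears twice (at the two endpoints). Membership in $P$ depends only on the residue class, so $|P \cap [v - 2l, v]| = |P_0| + 1$. Finally, (a) and (b) give $|P_0| \leq l$ (at most one element from each pair $\{u, 2l-u\}$ for $u \in [1, l-1]$, none from $\{l\}$, and at most the element $2l$). I expect this periodic argument to be the main obstacle, both to discover and to state cleanly.

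For the equality case, tracing the chain of inequalities backwards forces $|T| = 1$, $P = T$, and $q_v = v - l$ for the unique $v \in T$. The case $v > 2l$ is excluded because (c) would demand a second element $v - 2l \in P$; in the case $l < v \leq 2l$, a singleton $P = \{v\}$ gives $q_v = 1$, hence $v = l + 1$, recovering exactly the sequence stated in the proposition.
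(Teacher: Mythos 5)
Your proof is correct and takes a genuinely different route from the paper's. The paper fixes the \emph{minimal} index $j$ admitting a pair $(i,j)$ with $x_i+x_j>2l$, counts $Q(v=j)$ carefully by splitting at $u=j-l$, and disposes of all pairs with larger second index via the crude bound $\#\,Q(v>j)\le\sum_{v>j}x_v^+$; minimality of $j$ is what makes that crude bound affordable, and it lets the paper avoid ever looking at large $v$ in detail. You instead prove the uniform pointwise bound $q_v\le v-l$ for \emph{every} $v\in T$ and sum, which buys the stronger intermediate inequality $q\le p-|T|\,l$ and a cleaner equality analysis (forcing $P=T$ and $|T|=1$ simultaneously rules out the spurious candidate $(1,-2,\dots,-l,l+1,\dots)$ that the paper must eliminate by a separate argument). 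The price is your case $v>2l$, which the paper's minimal-$j$ device never has to confront. One small imprecision there: admissibility (your (c)) only gives downward closure, i.e.\ the inclusion $P\subseteq\{u+2lk:\ u\in P_0,\ k\ge 0\}$, not the set equality you assert (an element of $P_0$ need not lift back into $P$), so the correct conclusion is $|P\cap[v-2l,v]|\le|P_0|+1$ rather than an equality; since you only use the resulting upper bound $\le l+1$, this does not affect the argument. The supporting identities all check out: every pair of $Q$ has its larger index in $T$, $q_v=|P\cap[2l-v+1,v]|$ for $l<v\le 2l$ with the pairing $u\leftrightarrow 2l-u$ and $l\notin P$ giving $q_v\le v-l$, the count $q_v=(v-2l-1)+|P\cap[v-2l,v]|$ for $v>2l$, and $|P_0|\le l$.
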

\begin{proof}
The proof is similar to that of Proposition \ref{prop:An-2}.
Let $j$ be the minimal integer such that there exists $i \leq j$ with $x_i+x_j > 2l$. 
We have $x_j=j \geq l+1$.
We want to bound $q=\#\, Q$. We observe that if $(u,j) \in Q$ with $j-l \leq u <j$, then $x_u>0$. 
Otherwise, $x_u=-u$ and $0<x_u+x_j=-u+j\leq l$, contradicting the
assumption that $x_u+x_j>2l$. 
Hence $1\leq x_u^+$, and therefore
$$\#\, Q(v=j,j-l \leq u <j) \,\leq\, \sum_{j-l \,\leq\, u <j} x_u^+\, .$$ 
Actually, a similar inequality holds with $j-l$ replaced by $j-2l$, 
but in the sequel we will use the inequality $j-l \geq 1$. 
In fact, we have $$\#\, Q(v=j,u<j-l) \,\leq\, j-l-1\, .$$ 
Finally, $\#\, Q(v>j) \leq \sum_{v>j} x_v^+$. 
Therefore, by minimality of $j$, we have the inequality:
$$
\begin{array}{rcl}
  q & = & \#\, Q(v=j=u) + \#\, Q(v=j,j-l\leq u < j) + \#\, Q(v=j,u<j-l) + \#\, Q(v>j) \\
  & \leq & 1 + \sum_{j-l \leq u < j} x_u^+ + (j-l-1) + \sum_{v>j} x_v^+ \\
  & \leq & \sum_{u<j} x_u^+ + x_j + \sum_{v>j} x_v^+ - l = p-l\ .
\end{array}
$$
This proves the inequality $l+q\leq p$.

\smallskip

We now deal with the case of equality. Assume that $q=p-l$. Then, 
asking for equalities in the previous estimates, we find that
for $j-l \leq u <j$, if $x_u>0$, then $x_u=1$, and 
for $u\,<\,j-l$, $x_u+j\,>\,2l$ by the first inequality and $x_u\,<\,0$ by the second. 
In particular, $j-l-1\leq 0$ and hence $j=l+1$ for otherwise
$x_{j-l-1}+j=-(j-l-1)+j=l+1>2l$.
For $v_0$ such that $j=l+1<v_0$, from the equality $Q(v=v_0)=x_{v_0}^+$, we infer that if $x_{v_0}>0$
then for all $u\leq v_0$, $x_u+x_{v_0}>2l$. In particular, $x_{v_0-1}>0$ and $x_{v_0-2}>0$.
By decreasing induction, we find that $x_l=l$, contradicting the $l$-admissibility.
Hence, for $j<v<2l$, we get $x_v<0$.
Finally, $x$ is of the form $(-1,-2,-3,\cdots,-l,l+1,-(l+2),-(l+3),\ldots,-n)$ or 
$(1,-2,-3,\cdots,-l,l+1,-(l+2),-(l+3),\ldots,-n)$.
In the second case, one has $p=l+2$ thus $q=2$ thus $x_1+x_{l+1} > 2l$ thus $l=1$. But then $x$ is not
$1$-admissible since $x_1=1$.
\end{proof}

\subsection{The case of spinor Grassmannians (type $D_n$)}

In this case $Y$ parametrizes one of the two families of $n$-dimensional isotropic
subspaces of $\C^{2n}$ equipped with a non-degenerate quadratic form. It is the homogeneous space
$Y\,=\,{\rm SO}(2n)/{\rm U}(n)$.
By \cite{snow2}, the non-vanishing of $H^q(Y,\,\Omega^p_Y(l))$ amounts
to the existence of an $l$-admissible $D_n$-sequence
of weight $p$ and cohomological degree $q$ in the following sense:
\begin{definition}
Fix $n,l\in\mathbb{N}$ with $l>0$.
A $n$-uple of integers $(x_i)_{0 \leq i \leq n-1}$ will be called an
\emph{$l$-admissible $D_n$-sequence} if
\begin{itemize}
\item $|x_i| = i$ for all $0\leq i\leq n-1$,
\item $x_i + x_j \not = l$ for all $i < j$.
\end{itemize}
Its weight is defined to be $$p\,:=\,\sum_{x_i>0} x_i$$ and its cohomological degree
$q \,:=\, \# \left \{ (i,j)\,\mid\, i < j \mbox{ and } x_i + x_j > l \right \}$.
\end{definition}

\begin{remark}
\label{rema:1-Dn}
Observe that the only $1$-admissible $D_n$-sequence is the sequence $(0,-1,\ldots,-n)$ with $p=q=0$.
In fact, the $1$-admissibility condition leads to the implication $(x_v>0 \Longrightarrow x_{v-1}>0)$, and
thus for other sequences to $x_1=1$ so that $x_0+x_1=1$.
\end{remark}

\noindent
We continue to use Notation \ref{nota:x+} except that now $Q\,=\,
\{(i,j)\ \mid\ i<j \mbox{ and }x_i+x_j>l \}$.
Since $Y$ is not a projective space or a quadric, we have $n \geq 5$.
The first part of theorem~\ref{theo:general_inequalities} amounts to the following proposition in this case:
\begin{proposition}[First part of Theorem~\ref{theo:general_inequalities} for spinor Grassmannians]
\label{prop:Dn-1}
Let $(x_i)$ be an $l$-admissible $D_n$-sequence of weight $p$ and cohomological degree $q$, with $n \geq 5$. Then
$$ l+q \geq \frac{4p}{n} \ , $$
with equality occurring if and only if $x_i=i$ and $l=2(n-1)$.
\end{proposition}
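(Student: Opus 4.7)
The plan is to adapt the strategy of Proposition~\ref{prop:Cn-1}, with care for the different admissibility convention of $D_n$ (pair sum $\neq l$ with $i < j$ instead of $\neq 2l$ with $i \leq j$). Define $t = \#\{i : x_i > l\}$, $u = \#\{i : 0 < x_i \leq l\}$, and write $A = \{i : x_i > l\}$, $B = \{i : 0 \leq x_i \leq l\}$; since $x_0 = 0 \in B$, $|B| = u+1$.

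The case $l = 1$ is immediate from Remark~\ref{rema:1-Dn}, which forces $p = q = 0$. For $l \geq 2$, the admissibility $x_0 + x_l \neq l$ forces $x_l = -l$, so positive indices $\leq l$ lie in $\{1, \ldots, l-1\}$ and $A \subseteq \{l+1, \ldots, n-1\}$. Summing the top elements, and accounting for the range restriction $B \setminus \{0\} \subseteq \{1, \ldots, n-1\}$ when $l > n$, gives
$$
p \,\leq\, \Bigl(nt - \binom{t+1}{2}\Bigr) + \Bigl(\min(l, n) \cdot u - \binom{u+1}{2}\Bigr).
$$
Pairs with both indices in $A$ contribute to $q$ (since $x_i + x_j > 2l$), and so do pairs $(i, j)$ with $i \in B$, $j \in A$ (automatic, since $i \leq l < j$ and $x_i + x_j \geq x_j > l$), giving
$$
q \,\geq\, \binom{t}{2} + (u+1)t \,=\, \binom{t+1}{2} + ut.
$$
Substituting both bounds into the hypothetical $4p \geq n(l+q)$ and simplifying yields the necessary condition
$$
2\min(l, n)(4u - n) + nt(7 - t - 2u) \,\geq\, 4t(t+1) + 4u(u+1). \qquad (\ast)
$$

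The plan is to show $(\ast)$ holds only in the claimed equality configuration. When $l \geq n$, the range constraint $|x_i| \leq n - 1 < l$ forces $t = 0$, and $(\ast)$ combined with the admissibility of $S := \{i : x_i > 0\} = \{n-u, \ldots, n-1\}$ (which for $u = n-1$ requires $l \geq 2(n-1)$) isolates $l = 2(n-1)$, $u = n - 1$, i.e.\ the sequence $x_i = i$; for $u < n - 1$ a direct computation gives $n(l+q) - 4p = 2(n-u)(n-u-1) > 0$, strictly. When $l < n$, $t + 2u \geq 7$, and $4u \leq n$, both terms on the left of $(\ast)$ are non-positive, a contradiction. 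When $l < n$, $t = 0$, and $4u > n$, one sharpens the $q$-bound using that the $p$-saturating choice $B\setminus\{0\} = \{l-u, \ldots, l-1\}$ has all $\binom{u}{2}$ of its internal pair sums strictly above $l$ (since admissibility forces $u \leq \lfloor l/2 \rfloor$), giving $q \geq \binom{u}{2}$ and closing $(\ast)$ strictly. When $l < n$, $t \geq 1$, and $t + 2u < 7$, the feasibility requirement $x_{j-l} = j - l > 0$ for each $j \in A$ (from $x_{j-l} + x_j \neq l$) forces $j - l \in A \cup B$, whence $u \geq 1$; this reduces matters to a finite list of $(l, t, u)$-triples.

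The main obstacle is this last step: for the small $(t, u)$-configurations, the hypothetical inequality $(\ast)$ can hold with equality at the level of the generic bounds, so one must construct the extremal admissible sequences explicitly -- by iterating the feasibility chain $j, j-l, j-2l, \ldots$ starting from $j = \min A$ -- and verify case by case that $4p < n(l+q)$ strictly. For instance, $(n, l, t, u) = (5, 2, 1, 1)$ saturates $(\ast)$ but admits only the sequence $x = (0, 1, -2, 3, -4)$, giving $p = 4$, $q = 2$, and $n(l+q) = 20 > 16 = 4p$. Assembling all cases then singles out $x_i = i$, $l = 2(n-1)$ as the unique equality configuration.
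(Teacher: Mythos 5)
Your global counting scheme (parameters $t,u$, the bounds $p\le (nt-\binom{t+1}{2})+(\min(l,n)u-\binom{u+1}{2})$ and $q\ge\binom{t+1}{2}+ut$, and the master inequality $(\ast)$) is sound as far as it goes, but it breaks down in the range $n\le l< 2(n-1)$, which is in fact the hard core of the proposition. There $t=0$, so your lower bound on $q$ degenerates to $q\ge 0$, and the $p$-bound alone cannot close the argument. Concretely, take $n=6$, $l=6$ and $x=(0,-1,-2,3,4,5)$: this is $6$-admissible with $p=12$, so $4p=48$ while $nl=36$; the proposition holds only because the actual $q$ equals $3$, and nothing in your argument produces a positive lower bound on $q$ here. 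The asserted ``direct computation'' $n(l+q)-4p=2(n-u)(n-u-1)$ fails for this configuration (left side $6$, right side $12$), and it cannot be an identity since $l$ and $q$ are not functions of $u$. What is missing is the structural input that admissibility provides in this range: the positive indices pair off under $i\mapsto l-i$ with negative ones, which forces $l\le 2(n-u'-1)$ where $u'+1$ counts the positive entries with $l-n+1\le i\le n-1$; combined with a genuine lower bound $q\ge u'$ coming from pairs of large positive entries, this reduces the claim to the nonnegativity of an explicit quadratic in $l$, which the paper settles by concavity at the endpoints $l=n$ and $l=2(n-u'-1)$ (and the borderline case $n=2u'+2$, $l=n$ still needs the refined count $q=\binom{u'+1}{2}$). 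Without a substitute for this mechanism your case $l\ge n$ is unproven, and it is exactly where the equality configuration lives.

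Two further points. In the case $l<n$, $t=0$, $4u>n$, the sharpening $q\ge\binom{u}{2}$ is established only for the $p$-saturating choice $B\setminus\{0\}=\{l-u,\dots,l-1\}$; a general admissible $B$ can have much smaller $q$ (e.g.\ $S=\{1,2\}$, $l=4$ gives $q=0$), so you cannot combine the extremal $p$ with the extremal $q$ --- the trade-off must be argued jointly. Finally, the case $t\ge 1$, $t+2u<7$ is a plan rather than a proof: $(\ast)$ does bound $l$ for each $(t,u)$ once $n$ is large (e.g.\ it forces $l=2$ when $(t,u)=(1,1)$), but each surviving triple must then be classified for all $n$, and you exhibit only one instance. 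These last two items look repairable by more careful bookkeeping; the range $l\ge n$ is the genuine obstruction, and repairing it would essentially reproduce the paper's argument. (For comparison, the paper proceeds quite differently overall: it first reduces to $x_{n-1}=n-1$ by induction on $n$, splits on $l>2(n-1)$, $n\le l\le 2(n-1)$, $l<n$, and in the last case flips $x_{n-1}$ and again invokes induction.)
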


\begin{proof}
First of all, if $x_{n-1} = -(n-1)$, let $x'$ be the sequence of length $n-1$ with $x_i' = x_i$ for $i \leq n-2$.
Then $x'$ is evidently $l$-admissible. It has weight $p$ and cohomological degree $q$.
By induction on $n$, we get that $l+q \,\geq\, \frac{4p}{n-1}\,>\, \frac{4p}{n}$. Thus, in the rest of the proof, we
assume that $x_{n-1} = n-1$.

Let us first assume that $l>2(n-1)$. In this case, we have $q=0$.
Since in any case $p \leq \frac{n(n-1)}{2}$, we get that $\frac{4p}{n} \leq 2(n-1) < l+q$.

Let us now assume that $n \leq l \leq 2(n-1)$. Then, we denote by $u$ the unique
integer that satisfies the following condition
$$ \# \{ i \,\mid\, x_i>0, l-n+1 \leq i \leq n-1 \} \,=\, u+1\, .$$ Since the sequence $(x_i)$ is $l$-admissible,
if for $l-n+1 \leq i \leq n-1$ we have $x_i>0$, then $l-n+1 \leq l-i \leq n-1$ and $x_{l-i}<0$.
This implies that $$n-1-(l-n+1) \geq 2u\, ,$$ that is, $l \leq 2(n-u-1)$. Since $n \leq l \leq 2n-2u-2$, we have $n \geq 2u+2$.
The sum of positive $x_i$'s with $l-n+1 \leq i \leq n-1$ can be at most
$(u+1)(n-1) - \frac{u(u+1)}{2}$. Therefore, we have
$$4p \,\leq\, 4(u+1)(n-1)-2u(u+1) + 2 (l-n+1)(l-n)\, .$$ On the other hand
$q \geq u$, so introducing
$$
\Delta \,:= \,(u+l)n - 2(l-n+1)(l-n) - 4(u+1)(n-1) + 2u(u+1)\, ,
$$
the proposition amounts to the positivity of $\Delta$ whenever $n \leq l \leq 2(n-u-1)$.

After fixing $u$ and $n$, the above defined $\Delta$ is a concave function on $l$, 
so we only need to consider the values of $\Delta$ when
$l=n$ and when $l=2(n-u-1)$. When $l=n$, we get that $$\Delta \,=\,
n^2 - (4+3u)n + 2u^2+6u+4\, .$$ Fixing $u$, the two roots of
this polynomial are $n=u+2$ and $n=2u+2$. Since we know that $n \geq 2u+2$, we have $\Delta \geq 0$ for
$l=n\leq 2(n-u-1)$. For $l=2(n-1-u)$, we have $$\Delta\,=\, 3un - 6u(u+1)$$ Since once
again $n \geq 2u+2$, we get that $\Delta \geq 0$, and hence the inequality in the
proposition follows for any $l$ such that $n \leq l \leq 2(n-u-1)$.

Moreover, we show that the equality $l+q \,= \,\frac{4p}{n}$
can only occur if $x_i\,=\,i$ and $l\,=\,2(n-1)$.
Indeed, let us assume that $\Delta\,=\,0$. By the concavity argument, we have either $l=n$ or $l=2(n-u-1)$. If $l=n$,
we also get by the above argument that $n=2u+2$. Since the inequality
$$4p \,\leq\, 4(u+1)(n-1)-2u(u+1)$$ becomes an equality, we conclude that $x$ is
of the form $(-0,-1,\cdots,-u,u+1,u+2,\cdots,2u+1)$. This implies that $q\,=\,
\frac{u(u+1)}{2}$, and since $q=u$, we have $u=1$
and $n=4$, and the last equality contradicts the hypothesis of the proposition.
If $l=2(n-u-1)$, since $\Delta = 3un - 6u(u+1) = 0$, we have
$n=2(u+1)$ or $u=0$. The case of $n=2(u+1)$, $n=l$, was already dealt with earlier.
Thus we have $u=0$ and $l=2(n-1)$. The equality
$$4p = 4(u+1)(n-1)-2u(u+1) + 2 (l-n+1)(l-n)$$ amounts to $p = \frac{(n-1)n}{2}$, so that
$x_i = i$ for all $i$, and we are in the case of the proposition.

Let us now assume that $l<n$. We consider the sequence $(x'_i)$ with $x'_i = x_i$
for $i<n-1$ and $x'_{n-1} = -(n-1)$. We observe
that $(x'_i)$ is $l$-admissible with weight $p'=p-(n-1)$ and cohomological degree 
$q'$ satisfying $q' \leq q - (n-l)$. In fact, $x_i + x_{n-1} > l$ for $i<n-1-l$, and 
$x_{n-1-l}=n-1-l$ by $l$-admissibility, so that $x_{n-1-l} + x_{n-1} > l$.

By our very first argument, we have $\frac{4p'}{n-1} < l+q'$, so that
$\frac{4p}{n} < l+q'+4$. Therefore, if $q'\leq q-4$, then we are done. This is indeed the case 
if $n-l \geq 4$. Thus, we assume that $q' \geq q-3$, and
so $n \leq l+3$. We now consider these cases.

If $n=l+3$, we have $x_{n-1} = l+2$, and so $x_2=2$. Since $q' \geq q-3$, we get
that $x_i<0$ for $3 \leq i \leq n-2$. Thus we have $p \leq l+5$. The inequality in the
proposition is implied by the inequality
$l+3\,>\, \frac{4(l+5)}{l+3}$, which in turn is true for $l\,\geq\, 3$. Observe that the value
$l\,=\,2$ is excluded because we would
then have $x_4\,=\,4$. Therefore, either $x_2+x_0=2$ (if $x_2=2$) or $x_2+x_4=2$ (if $x_2=-2$).

If $n=l+2$, then there is at most one integer $i$ such that $2 \leq i \leq n-2$ and 
$x_i>0$. By admissibility, $x_1=1$,
and therefore $x_{l-1}=-l+1$. Moreover, we have $x_l=-l$. This implies that $p \leq 
2l$. The inequality of the proposition is implied by
the inequality $l+2 > \frac{8l}{l+2}$, which in turn is true for $l \geq 3$.

The value $n=l+1$ would contradict $l$-admissibility, since we would then have $x_l=l$.
\end{proof}

We now prove the second part:
\begin{proposition}[Second part of Theorem~\ref{theo:general_inequalities} for spinor
Grassmannians]
\label{prop:Dn-2}
Let $(x_i)$ be an $l$-admissible $D_n$-sequence of weight $p$ and cohomological degree $q$, with $n \geq 5$
and $q>0$. Then
$$ l+q \leq p \ . $$
Moreover, the equality $p=q+l$ holds if and only if
there are exactly two indices $i,j$ such that $x_i>0,x_j>0$ and they satisfy the
condition that either
$x_i+x_j=l+1$ (in this case $q=1$) or
$x$ is equal to $(0,1,-2,-3,\ldots,-l,l+1,-(l+2),-(l+3),\ldots,-n)$ (then $q=2$).
\end{proposition}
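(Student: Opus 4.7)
The plan is to follow the template of Proposition~\ref{prop:Cn-2}, with two structural adjustments for type $D_n$: pairs in $Q$ are strict ($i<j$, so there is no diagonal contribution), and the index $0$ with $x_0=0$ must be tracked. In particular, the admissibility condition $x_0+x_l\ne l$ already forces $x_l=-l$ when $1\le l\le n-1$, and this will shape the equality analysis.

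First, let $j$ be the smallest index such that some pair $(i,j)$ lies in $Q$; this exists because $q>0$. One first checks that $x_j=j$, since $x_j=-j$ would give $x_i+x_j\le i-j<0\le l$ for any $i<j$. The bound on $\#\,Q(v=j)$ comes from splitting $u\in\{0,1,\ldots,j-1\}$ into the contribution from $u=0$ (which is $1$ if $j>l$ and $0$ otherwise), the negatives in $[1,j-l)$, and the positives satisfying $x_u+j>l$. A parallel case split on the sign of $x_{v_0}$ and on $v_0\gtrless l$ yields $\#\,Q(v=v_0)\le x_{v_0}^+$ for every $v_0>j$; the key remark when $v_0>l$ is that the positives and negatives in $[1,v_0)$ together number $v_0-1$, so this contribution is at most $v_0=x_{v_0}^+$. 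Summing everything and using $x_j=j$ to absorb the offset $j-l$ into $\sum_u x_u^+$ gives $q\le p-l$ when $j>l$. When $j<l$ the same inequalities rearrange to $(l-j)(c_j-1)\ge 0$, where $c_j\ge 1$ is the number of positives $u^*\in(l-j,j)$ contributing to $\#\,Q(v=j)$, each satisfying $x_{u^*}=u^*\ge l-j+1$. The case $j=l$ is excluded by admissibility.

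For the equality $p=q+l$, I would trace back where every inequality is sharp. In the case $j>l$, sharpness forces $x_u=-u$ for $1\le u<j-l$, restricts the positives $x_u>0$ with $u<j$ to $u=1$ only, and forces that positive to be $x_1=1$. Since these two requirements conflict at $u=1$ as soon as $j>l+1$, and in fact admissibility applied to the pair $(j-l,j)$ rules out the resulting configuration, the only surviving value is $j=l+1$; admissibility then forces $x_1=1$, leaving precisely the configuration $(0,1,-2,\ldots,-l,l+1,-(l+2),\ldots,-(n-1))$ with $q=2$. In the case $j<l$, sharpness forces $c_j=1$, $x_{u^*}=u^*=l-j+1$, $x_u=-u$ for every other $u<j$, and no positive $x_v$ for $v>j$; the two positive indices are then $u^*$ and $j$, with $x_{u^*}+x_j=l+1$ and $q=1$.

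The main obstacle is precisely the equality case when some $v_0>j$ still carries a positive $x_{v_0}=v_0$. The sharpness condition $\#\,Q(v=v_0)=v_0$ then forces $x_u=u$ throughout the interval $[v_0-l,v_0)$, which collides with the pattern $x_u=-u$ already forced in the lower range. Ruling out such configurations case by case without missing any subtle admissible combination, and confirming that no third equality family arises beyond the two listed, is the most delicate bookkeeping.
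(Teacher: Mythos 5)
Your proposal follows the paper's proof essentially step for step: take the minimal $j$ with a pair in $Q$, exclude $j=l$ via $x_0+x_l\neq l$, bound $q$ row by row to get $q\le p-l$ in the two cases $j>l$ and $j<l$ (your rearrangement $(c_j-1)(l-j)\ge 0$ is just a repackaging of the paper's ``largest $i$'' step), and run the same equality trace, with admissibility of the pair $(j-l,j)$ forcing $j=l+1$. The ``delicate bookkeeping'' you flag at the end is exactly the paper's descending induction: iterating the sharpness condition $\#\,Q(v=v_0)=x_{v_0}^+$ from $v_0$ down to $j+1$ collides $x_2>0$ with the forced $x_2=-2$ when $j=l+1$, and forces $x_l>0$ against $x_0+x_l=l$ when $j<l$, so no further obstacle arises.
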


\begin{proof}
Let $j$ be the smallest integer such that there exists $i<j$ with $x_i+x_j>l$.
Observe that $x_j>0$, and by $D_n$-admissibility, $x_0+x_j\neq l$ so that $j \neq l$.
We first deal with the case $j>l$.
The argument in this case is similar to the case of type $C_n$:
$$
\begin{array}{rcl}
  q=\#\, Q & = & \#\, Q(v=j,j-l\leq u<j) + \#\, Q(v=j,u<j-l) + \#\, Q(v>j) \\
  & \leq & \sum_{j-l\leq u <j} x_u^+ + (j-l) + \sum_{v>j} x_v^+ \\
  & \leq & \sum_{u<j} x_u^+ + x_j^+ - l + \sum_{v>j} x_v^+ = p-l\ .
\end{array}
$$
If under the assumption $j>l$ the equality $p=q+l$ holds, then
by the second inequality we have $x_u\leq 0$ for $u<j-l$,
and by the first inequality we have $x_u^+\leq 1$ for $j-l\leq u<j$.
If $x_{j-l}=-(j-l)$, then $x_{j-l}+x_j=l$, contradicting $l$-admissibility.
Therefore, $x_{j-l}=j-l\leq 1$, so $j-l=1$ and $x_1=1$. When $v>j$,
the first inequality leads to the implication $(x_v>0 \Longrightarrow \forall~ u<v,
x_u+x_v>l)$. Assuming the existence of a $v>j$ such that $x_v>0$, we get
that $x_{v-1}>0$ and $x_{v-2}>0$ because $l>1$
(see Remark \ref{rema:1-Dn}).
By descending induction, this would lead to $x_{j-1}>0$.
Then $j-1=1$, hence $j=2$, $l=1$, which is a contradiction. 
Thus, if $v>j$, then $x_v<0$. Hence $x=(0,1,-2,-3,\cdots,-l,l+1,-(l+2),-(l+3),\ldots,-n)$
(with $p=l+2$ and $q=2$).

Let us now assume that $j<l$, and let $i$ be the largest integer such that $i<j$ and
$x_i+x_j>l$. Note that $x_i>0$ and $x_j>0$, and by maximality of $i$
we have $x_k<0$ for $i<k<j$.
We have
$$
\begin{array}{rcl}
  q=\#\, Q & = & 1 + \#\, Q(v=j,u<i) + \#\, Q(v>j) \\
  & \leq & 1 + \sum_{u<i} x_u^+ + \sum_{v>j} x_v^+ \\
  & \leq & (x_i+x_j-l) + \sum_{u<i} x_u^+ + \sum_{v>j} x_v^+ = p-l \ .
\end{array}
$$
The inequality $l+q\leq p$ is proved. 
In the case of $q=p-l$, for $1\leq u<i$ we have $x_u^+\leq 1$
by the first inequality. By the second inequality we have
$x_i+x_j=l+1$. Using the descending induction argument, if there is a 
$v>j$ such that $x_v>0$, then $x_l>0$, and $x_0+x_l=l$ contradicting the admissibility.
The only positive entries are among $x_1,x_i,x_j$. In this case, since $i+j=l+1$,
we have $Q=\{(i,j)\}$, so $q=1$, $p=l+1$ and $x_1<0$.
\end{proof}

\subsection{The exceptional cases (type $E_6$ or $E_7$)}

Now $Y$ is homogeneous under a group of type $E_6$ (case $EIII$) or $E_7$ (case $EVII$).
In the first case, we have $\dim(Y)=16$ and $c_1(Y)=12$. In the second case, we have
$\dim(Y)=27$ and $c_1(Y)=18$. These values are well-known to the specialists; several arguments
for the computation of $c_1$ can be found at the end of Section 2.1 in \cite{cmp}.

{}From Tables 4.4 and 4.5 in \cite{snow2} we conclude that the inequalities we are looking
for hold:
\begin{proposition}[Theorem~\ref{theo:general_inequalities} for the exceptional cases]
\label{prop:E}
Let $Y$ be a Hermitian symmetric space of type $EIII$ or $EVII$.
Let $l,p,q$ be integers with $l>0,p>0$, and such that $$H^q(Y,\Omega_Y^p(l)) \neq 0\, .$$
Then, $p\, \frac{c_1(Y)}{\dim(Y)} \leq l+q$. Equality implies that $p=\dim(Y),l=c_1(Y)$
and $q=0$.

Assume moreover that $q>0$. Then $l+q \leq p$. 
\end{proposition}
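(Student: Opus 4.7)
The plan is direct: the non-vanishing of $H^q(Y,\Omega^p_Y(l))$ on a compact irreducible Hermitian symmetric space has been completely classified by Snow, with the answer for the types $EIII$ and $EVII$ recorded explicitly in Tables~4.4 and~4.5 of \cite{snow2}. Since $\dim(Y)=16$ for $EIII$ and $\dim(Y)=27$ for $EVII$, each table contains only finitely many triples $(l,p,q)$, and the whole proposition reduces to a finite case check.

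More concretely, for each nonzero entry in Snow's tables I would verify the inequality $p\,\frac{c_1(Y)}{\dim(Y)}\leq l+q$, which for our two spaces reads $3p\leq 4(l+q)$ in the $EIII$ case and $2p\leq 3(l+q)$ in the $EVII$ case, and, whenever $q>0$, also the inequality $l+q\leq p$. For the first inequality, the tables necessarily contain the triple $(l,p,q)=(c_1(Y),\dim(Y),0)$ corresponding to the identification $\Omega^{\dim Y}_Y\simeq\O_Y(-c_1(Y))$, which saturates the bound; it then remains to observe, by inspection, that every other entry has $p<\dim(Y)$ together with $l+q$ exceeding $p\cdot c_1(Y)/\dim(Y)$ by a strictly positive slack, so the equality case is unique. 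The second inequality, only asserted when $q>0$, is simply read off from the tables entry by entry.

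The only real obstacle is organisational, namely to cleanly traverse the two tables of \cite{snow2} and rule out equality in every remaining entry. There is no conceptual subtlety beyond the vanishing results of \cite{snow2}, in sharp contrast with the classical series treated above, where tailored combinatorial arguments on partitions and admissible $C_n$- or $D_n$-sequences were necessary.
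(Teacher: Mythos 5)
Your proposal matches the paper's proof exactly: the paper likewise reduces the statement to a finite inspection of Tables~4.4 and~4.5 of \cite{snow2}, using the same values $\dim(Y)=16$, $c_1(Y)=12$ (type $EIII$) and $\dim(Y)=27$, $c_1(Y)=18$ (type $EVII$). Your rewriting of the slope inequality as $3p\leq 4(l+q)$ and $2p\leq 3(l+q)$ and your identification of the unique equality case are both consistent with what the paper asserts.
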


\subsection{A cohomological property}

We will need the following corollary of Theorem~\ref{theo:general_inequalities} to prove our stability results.
\begin{proposition}
\label{prop:general-ineq}
Let $Y$ be a compact irreducible Hermitian symmetric space, but not a projective space.
Let $l,p,q$ be integers, with $q<\dim Y$, such that
$H^q(Y,\Omega^p_Y(l))\neq 0$.
Then,
$$ l+q \,\geq\, p \frac{c_1(Y)}{\dim(Y)}\, ,$$ with equality
holding if and only if 
\begin{itemize}
\item $p=\dim(Y),q=0$ and $l=c_1(Y)$, 
\item or $p=q=l=0$,
\item or $Y$ is a quadric and $l=0$,
\item or $Y \simeq \Q^4,l=2,p=3,q=1$.
\end{itemize}
\end{proposition}
% Of course, if $q=\dim(Y)$ were allowed, then the above inequality would fail, since we may
% have $l$ very negative and $p=\dim(Y)$ (then apply Serre duality).

\begin{proof}
Assume that $H^q(Y,\Omega_Y^p(l))\not=0$.
If $l \geq 0$, then we are done by Theorem \ref{theo:general_inequalities}(1).

If $p\,=\,0$, then $q\,=\,0$ if
$l\,>\,0$, while $q\,=\,\dim(Y)$ if $l\,<\,0$. Thus this case is also settled.

Assume that $q>0$ and $l >0$. Let us prove that actually
\begin{equation}
\label{equa:inegalite_poincare}
l+q \,<\, p \frac{c_1(Y)}{\dim(Y)} + \dim(Y) - c_1(Y)\, .
\end{equation}
Firstly, the theorem states that $l+q \,\leq \,p$, and we have $p\leq\dim(Y)$. Since
$$(1-\frac{c_1(Y)}{\dim(Y)})(l+q) \leq \dim(Y) - c_1(Y)\, ,$$ we find
$$
l+q \leq \frac{c_1(Y)}{\dim(Y)}(l+q) + \dim(Y) - c_1(Y) \leq p \frac{c_1(Y)}{\dim(Y)} +\dim(Y) - c_1(Y)\ .
$$
Note that the equality here would imply that $p=\dim(Y)$, in which case, by Kodaira vanishing, we cannot have $q>0$. 
Thus, the inequality (\ref{equa:inegalite_poincare}) is proved.
Now, coming back to the proof of Proposition~\ref{prop:general-ineq}, if $l\,<\,0$
then Serre duality leads to
$$H^{\dim(Y)-q}(Y,\Omega_Y^{\dim(Y)-p}(-l))\,\not=\,0\, .$$ 
The relation (\ref{equa:inegalite_poincare}) gives that
$(-l)+(\dim(Y)-q) < (\dim(Y)-p)\frac{c_1(Y)}{\dim(Y)} + \dim(Y) - c_1(Y)$, or in other
words,
\begin{eqnarray*}l+q \,>\, p\frac{c_1(Y)}{\dim(Y)}\, .\qedhere\end{eqnarray*}
\end{proof}

\section{Restrictions with small Picard group}
\label{section:hermitian}

\subsection{Short split resolution}

We will prove stability of the restriction of $\Omega_Y$ to subschemes whose structure
sheaf has a short split resolution in the following sense:

\begin{definition}
\label{defi:short-resolution}
A subscheme $X \subset Y$ is said to have a \emph{short split resolution} if there is a resolution
$$0\to\F_k\to\F_{k-1}\to\cdots\to\F_1\to\F_0\to\Oc_X\to 0\, ,$$
where $\F_0 = \Oc_Y$, $\F_i:=\oplus_j\Oc_Y(-d_{ij})$, and
the length $k$ of the resolution satisfies $k < \dim(Y)$.
\end{definition}

\begin{example}
\label{exam:complete-intersection}
The Koszul resolution of complete intersections is a short split resolution for a positive-dimensional
complete intersection in $Y$. If, moreover, none of the equations are linear, 
then the integers $d_{ij}$ in Definition~\ref{defi:short-resolution} satisfy $d_{ij} \geq i+1$.
\end{example}

Our second class of examples are some arithmetically Cohen-Macaulay subschemes. 
Let $X \subset \P^N$ be a subscheme defined by a homogeneous ideal $J$ in
the homogeneous coordinate ring $A:=\C[X_0,\ldots,X_N]$. Recall that $X$ is called \emph{\acm}
if the depth of $A/J$ is equal to the dimension of $A/J$, namely $\dim X + 1$. 
Let, moreover, $I \subset A$ denote the homogeneous ideal of $Y$. 
The reason why we will consider \acm subschemes is the following:
\begin{lemma}\label{fact:acm}
Let $X$ be an \acm subscheme of $Y$ defined in $\P^N$ by an ideal $J$.
Assume that $A/J$ has finite projective dimension over $A/I$.
Then, the structure sheaf $\Oc_X$ has a resolution by split vector bundles over $Y$ of length $k=\dim Y - \dim X$
$$0\to\F_k\to\F_{k-1}\to\cdots\to\F_1\to\F_0\to\Oc_X\to 0 ,$$
where $\F_0 = \Oc_Y$ and $\F_i:=\oplus_j\Oc_Y(-d_{ij})$ with $d_{ij}\geq i$ for $i>0$.
In particular, if $\dim(X)>0$, then $X$ has a short split resolution.
\end{lemma}

\begin{proof}
We have $I \subset J \subset A$. By Auslander-Buchsbaum formula~\cite[Theorem 19.1]{matsumura},
we have the equality
$$\pd_{A/I}(A/J) = \depth(A/I) - \depth(A/J).$$
Moreover, any homogeneous space embedded by a homogeneous ample line bundle is \acm 
(see for example \cite[Corollary 3.4.4]{bk}). Thus,
$$\depth(A/I) = \dim(A/I) = \dim Y + 1.$$ Therefore,
$\pd_{A/I}(A/J) = \dim Y - \dim X$. Hence a minimal free resolution of $A/J$ over $A/I$ has length $k=\dim Y - \dim X$.
\end{proof}

\begin{remark}
 Concretely, our assumption that $A/J$ has finite projective dimension over $A/I$ might be difficult to check.
 For example, Theorem \ref{theo:restriction_hermitian} becomes obviously wrong if we take $X \simeq \P^1$
 (since the restriction of $\Omega_Y$ to $X$ will be split in this case).
 All our arguments hold for $X \simeq \P^1$, except that in this case the corresponding projective dimension is infinite.
\end{remark}

\begin{lemma}
 \label{lemm:degree-resolution}
 Let $X \subset Y$ admitting a short split resolution. Then, there is a short split resolution such that
 the integers $d_{ij}$ in Definition \ref{defi:short-resolution} satisfy $\forall i,j, \ d_{ij} \geq i$.
 If, moreover, $X$ is not linearly degenerate in $\P^N$, then we may assume that
 the inequalities $d_{ij}\geq i+1$ hold for all $(i,j)$.
\end{lemma}
\begin{proof}
 We consider as in the proof of Fact \ref{fact:acm} a minimal free resolution of $A/J$ over $A/I$.
 The hypothesis implies that such a resolution will have length strictly less than $\dim(Y)$.
 Moreover, since for such a resolution the differentials
 have positive degree, we have $d_{ij}\geq i$ for all $i,j$.
 If $X$ is not included in any hyperplane, it has no equation of degree $1$, 
 so $d_{1j} \geq 2$ for all $j$, and we deduce that $d_{ij} \geq i+1$. 
\end{proof}

\subsection{General argument}
Recall that the \emph{slope} of a coherent torsion-free sheaf $F$ of positive rank on a polarized manifold $(X,L)$ is 
$$\mu(F):=\frac{c_1(F)\cdot L^{\dim X-1}}{\rank F}.$$
Recall that a coherent torsion-free sheaf $E$ on a polarized manifold $(X,L)$ is said to be \emph{(slope-)stable}
if the slope of all its subsheaves of positive smaller rank is less than its slope.
Note that it is enough to check the inequalities for saturated subsheaves.

\label{subsec:general}
We can now prove the
\begin{theorem}
\label{theo:restriction_hermitian}
Let $Y$ be any compact irreducible Hermitian symmetric space
excluding a projective space and a quadric. 
Let $X$ be a locally factorial positive dimensional subvariety of $Y$ 
having a short split resolution and such that $\Pic(X)\,=\,\Z\cdot \Oc_Y(1)_{| X}$.
Then the restriction of $\Omega_Y$ to $X$ is stable.
\end{theorem}

\noindent
Note that if $X$ is a complete intersection of $\dim X\,\geq\, 3$, the constraint on the Picard group of the complete
intersection is ensured by Lefschetz theorem~\cite[Example 3.1.25]{lazarsfeld}.

\begin{proof}
We will later prove a slightly weaker result for quadrics and projective spaces
(Theorem \ref{theo:restriction_exceptions}), thus, for the moment $Y$ is any
compact irreducible Hermitian symmetric space.

We first explain how, building on a classical argument, the assumption on the existence of a small split resolution
reduces the check of the stability inequalities to vanishing theorems.
Let $\F$ be a coherent subsheaf of $\Omega_{Y|X}$ of rank $0\,<\,p\,<\,\dim Y$. 
Since $X$ is assumed to be locally factorial, 
the rank one reflexive subsheaf $\det\F\,:=\, (\bigwedge\nolimits^p \F)^{**}$
of $\bigwedge\nolimits^p\Omega_{Y|X}$ is invertible~\cite[Proposition 1.9]{hartshorne}
and hence isomorphic to $\Oc_Y(-d)_{\mid X}=:\Oc_X(-d)$ for some integer $d$. We have
\begin{eqnarray*}
\mu(\OY_{\mid X})&=&\frac{\Oc_X(1)^{\dim(X)-1}\cdot K_Y}{\rank(\Omega_{Y|X})}=-\frac{c_1(Y)}{\dim(Y)}\cdot \deg_Y X\deg Y\\
\mu(\F)&=&\frac{\Oc_X(1)^{\dim(X)-1}\cdot \det\F}{\rank(\F)}=-\frac{d}{p}\cdot \deg_Y X\deg Y.
\end{eqnarray*}
The inclusion $\F\,\subset\,\Omega_{Y|X}$ yields the non-vanishing of
$$H^0(X,Hom(\det\F,\Omega^p_{Y|X}))\,=\,H^0(X,\Omega^p_{Y}(d)_{\mid X}),$$ 
from which we have to deduce the stability inequality
$$\mu(\F)\,<\,\mu(\Omega_{Y|X})\, , \textrm{ equivalently, } d\,>\,p \, \frac{c_1(Y)}{\dim(Y)}.$$

Consider a resolution of $\Oc_X$ as in Definition~\ref{defi:short-resolution}. The resolution
$$0\to\F_k\otimes\Omega^p_{Y}(d)
\to\cdots \to\F_1\otimes\Omega^p_{Y}(d)\to\F_0\otimes\Omega^p_{Y}(d)\to\Omega^p_{Y}(d)_{\mid X}\to 0$$
translates the non-vanishing of $H^0(X,\Omega^p_{Y}(d)_{\mid X})$ 
into the non vanishing of one of the cohomology groups in the decomposition
$$H^i(Y,\F_i\otimes\Omega^p_{Y}(d))=\oplus_jH^i(Y,\Omega^p_{Y}(d-d_{ij})),$$
say of $H^i(Y,\Omega^p_{Y}(d-d_{ij}))$.

We now assume that $Y$ is not a projective space.
In our setting Proposition~\ref{prop:general-ineq} reads
\begin{equation}
\label{equa:dJ}
( d-d_{ij} ) +i \geq p \frac{c_1(Y)}{\dim(Y)}.
\end{equation}
It follows that $d\geq p \frac{c_1(Y)}{\dim(Y)}$.

If the equality $d= p \frac{c_1(Y)}{\dim(Y)}$ holds, we get that $d_{ij}=i$,
and the equality in (\ref{equa:dJ}) holds. Now assume that $Y$ is not a quadric.
Therefore, Proposition~\ref{prop:general-ineq} gives that $p=0$ or $p=\dim(Y)$,
equivalently, as we may assume $\F$ saturated (i.e. with torsion free quotient)
either $\F=\{0\}$ or $\F=\Omega_Y$. Thus, $\Omega_Y$ is stable.
\end{proof}

Some remarks regarding the two excluded cases in Theorem~\ref{theo:restriction_hermitian}.
\begin{remark}
If $X$ is contained in a linear subspace $H$ in a projective space $Y$, then for the exact
sequence, 
$$ 0 \to {N^*_{H|Y}}_{\mid X} \to \Omega_{Y|X} \to {\Omega_H}_{\mid X} \to 0\, ,$$
the slope $\frac{-\deg X}{\codim_Y H}$ of ${N^*_{H|Y}}_{\mid X}$ 
is strictly bigger than the slope $\frac{-(\dim Y+1)\deg X}{\dim Y}$ of $\Omega_{Y|X}$. Thus,
$\Omega_{Y|X}$ is not even semi-stable.
\end{remark}

\begin{remark}\label{rema:linear-unstable}
If $X \subset Y$ is a linear section of the quadric $Y$ (i.e. $\deg_YX=1$), then, as the above proof shows,
the restriction of $\Omega_Y$ to $X$ is semi-stable.
Furthermore, all the vector bundles in the following exact sequence
$$ 0 \to N^*_{X|Y} \to \Omega_{Y|X} \to \Omega_X \to 0$$
have equal slope $-1$. So $\Omega_{Y|X}$ is not stable. 
In fact, let us continue the end of the argument in the proof of Theorem~\ref{theo:restriction_hermitian}:
we have in this case $i=1$ and $d_{11}=1$ (since a resolution of $X$
contains only one term of degree $1$, $X$ being a hyperplane section). We get
$H^1(Y,\Omega^p_{Y}(d-1)) \neq 0$, which implies that $p=1$ and $d-1=0$ by Proposition~\ref{prop:general-ineq}.
Thus the only destabilizing subsheaf is $\Oc_X(-1)$.
\end{remark}

Thus, to get a result similar to Theorem \ref{theo:restriction_hermitian} in these
two cases, we exclude the case where $X$ has a linear equation:
\begin{theorem}
\label{theo:restriction_exceptions}
Let $Y$ be a smooth quadric of dimension at least $3$ or a projective space. Then $\Omega_Y$ is stable.
Let $X$ be a locally factorial subvariety in $Y$ having a short split resolution and
such that $\Pic(X)\,=\,\Z\cdot\Oc_Y(1)_{| X}$. Assume that $X$ is contained in no
hyperplane section of $Y$.
Then the restriction of $\Omega_Y$ to $X$ is stable.
\end{theorem}

\noindent 
Note that by Lefschetz theorem, this theorem applies to very general cubic hypersurfaces of~$\Q^3$.

\begin{proof}
We continue with the notation of Theorem \ref{theo:restriction_hermitian}.
If $Y$ is a quadric, by the above proof of Theorem \ref{theo:restriction_hermitian}, we have
the non-vanishing of some $H^i(Y,\F_i\otimes\Omega^p_{Y}(d))$. If $i>0$, by (\ref{equa:dJ})
we have for some $j$ the inequality
$$d - d_{ij} + i \geq p\frac{c_1(Y)}{\dim(Y)}~~(=p)\, .$$ Since, by Lemma \ref{lemm:degree-resolution},
we get $d_{ij} > i$, we conclude that $d>p$, as wanted.
If $i=0$ and we assume $d\leq p$ by contradiction, then $H^0(Y,\Omega^p_Y(d)) \neq 0$, and by a result due to
Snow (see Section~\ref{sec:quadric}), we get that either $p=0$ or $p=\dim(Y)$. 
This implies stability as in the proof of Theorem~\ref{theo:restriction_hermitian}.

Assume now that $Y$ is the projective space $\P^n$.
We may assume that $0<p<n$. We wish to prove that $$\frac dp \,>\, \frac{n+1}{n}\, .$$
Since $\frac{p+1}{p} \,>\, \frac{n+1}{n}$, it is enough to prove that $d \,\geq\, p+1$.
For integers $p,q,l$, we have
$H^{q}(\P^n,\Omega^p_{\P^n}(l)) \neq 0$ if and only if one of the following hold:
\begin{enumerate}
\item $l>0,p<l$ and $q=0$,
\item $l=0$ and $p=q$,
\item $l<0,n-p<-l$ and $q=n$.
\end{enumerate}
Once again, we get that $H^i(Y,\F_i \otimes \Omega^p_{Y}(d)) \neq 0$ for some $i$.
If $i=0$, since $\F_0=\Oc_Y$, this implies $d>p$ or $p=d=0$.
If $i>0$, since $i<n$, this implies that $i=p$ and $d=d_{ij}$ for some $j$. Thus
we have $d \geq i+1 = p+1$, as wanted.
\end{proof}

\section{Restriction to a hypersurface with an increase of the Picard group}
\label{section:small dimensions}

% In this section, we assume that we are given a compact irreducible Hermitian symmetric space $Y$ as
% in Section~\ref{section:hermitian}. We assume $X \subset Y$ is a complete intersection but we do not
% make any assumption on ${\Pic}(X)$. 

\subsection{Another argument for general complete intersection}
In this section, we want to get rid of the assumption on the Picard group.
This can be done at the cost of considering only general complete intersections.
We get the following adaptation of theorems \ref{theo:restriction_hermitian} and \ref{theo:restriction_exceptions}.
\begin{theorem}\label{theo:general-restriction_divisor}
Let $Y$ be a compact irreducible Hermitian symmetric space. Let $X$ be a general positive-dimensional complete intersection in $Y$.
If $Y$ is neither a projective space nor a quadric, then the restriction of
$\Omega_Y$ to $X$ is semi-stable.

If $Y$ is a smooth quadric or a projective space,
assume that none of the hypersurfaces $H_i$ is linear.
Then the restriction of $\Omega_Y$ to $X$ is semi-stable.
\end{theorem}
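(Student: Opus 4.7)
The plan is to follow the strategy outlined in the paragraphs immediately preceding the theorem: bootstrap the cohomological nonvanishing argument of Theorem~\ref{theo:restriction_hermitian} by applying it to the first term of a \emph{relative} Harder--Narasimhan filtration over the parameter space of complete intersections, rather than to a fixed subvariety. This trades the Picard-group hypothesis for the flexibility of moving in the family, at the cost of losing strict stability.

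Set $V = \Gamma(Y,\O_Y(1))^{*}$, $S = \prod_{i=1}^{c} \P(S^{h_i}V^{*})$ and let $Z \subset Y \times S$ be the universal incidence variety, with projections $p : Z \to Y$ (a locally trivial projective bundle) and $q : Z \to S$. Argue by contradiction, assuming $\Omega_{Y|Y_s}$ fails to be semistable for general $s\in S$. By the Huybrechts--Lehn relative Harder--Narasimhan theorem, there is a birational $f : T \to S$ such that $g^{*} p^{*} \Omega_Y$ on $Z_T$ carries a filtration restricting, on a general fibre, to the absolute Harder--Narasimhan filtration of $\Omega_{Y|Y_s}$. Let $\mathcal{F}$ be its maximally destabilising term, of rank $k$ with $0 < k < \dim Y$; the double dual of $\bigwedge^{k}\mathcal{F}$ is invertible on the smooth variety $Z_T$ and, because $f$ and $g$ are isomorphisms in codimension one, descends to a line bundle $\mathcal{L} \subset \bigwedge^{k} p^{*}\Omega_Y$ on $Z$. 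The splitting $\Pic(Z) \simeq \Pic(Y) \times \Pic(S)$, valid since $p$ is locally trivial with fibres products of projective spaces, lets us write $\mathcal{L} \simeq p^{*}\O_Y(-d) \otimes q^{*}\mathcal{L}_S$ for a unique integer $d$.

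Next, restrict to a generic fibre $X = Y_s$: one obtains a nonzero section of $\Omega^{k}_{Y}(d)|_{X}$, while the non-semistability hypothesis translates into the slope inequality $d < k\, c_1(Y)/\dim(Y)$. Tensor the Koszul resolution of $\O_X$ from Example~\ref{exam:complete-intersection} with $\Omega^{k}_{Y}(d)$ and run the hypercohomology spectral sequence to deduce the nonvanishing of some $H^{i}(Y, \Omega^{k}_{Y}(d - d_{ij}))$. Proposition~\ref{prop:general-ineq} then forces $d - d_{ij} + i \geq k\, c_1(Y)/\dim(Y)$, and combined with $d_{ij} \geq i$ this gives $d \geq k\, c_1(Y)/\dim(Y)$, contradicting non-semistability. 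When $Y$ is a projective space or a quadric, the hypothesis that no $H_i$ is linear sharpens the Koszul estimates to $d_{ij} \geq i+1$, and one concludes by Snow's theorem (Section~\ref{sec:quadric}) or the explicit nonvanishing classification for $\Omega^{p}_{\P^n}(l)$, exactly as in the proof of Theorem~\ref{theo:restriction_exceptions}.

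The main technical obstacle is verifying that the relative Harder--Narasimhan filtration actually produces a genuine line bundle on $Z$ whose general fibre recovers $\det \mathcal{F}_s$; this rests on the smoothness of $Z$, the codimension-one isomorphism property of the birational modifications $f$ and $g$, and the clean product structure of $\Pic(Z)$. Once these foundational points are secured, the cohomological step is a direct transplant of the arguments already assembled for Theorems~\ref{theo:restriction_hermitian} and~\ref{theo:restriction_exceptions}, and no new vanishing input beyond Proposition~\ref{prop:general-ineq} is required.
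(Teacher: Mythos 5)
Your proposal is correct and follows essentially the same route as the paper: the same universal family $Z\subset Y\times S$, the same use of the relative Harder--Narasimhan filtration of Huybrechts--Lehn to extract a line bundle $\cL=p^{*}\O_Y(-d)\otimes q^{*}\cL_S$ on $Z$ from the maximally destabilising term, and the same cohomological contradiction via the Koszul resolution, Proposition~\ref{prop:general-ineq}, and (for $\P^n$ and quadrics) the sharpened bounds $d_{ij}\geq i+1$ together with Snow's classification. The only presentational difference is that you spell out the spectral-sequence step that the paper compresses into ``using previous results''; the substance is identical.
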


\begin{proof}
Let $V = \Gamma(Y,\Oc_Y(1))^*$ be the minimal homogeneous embedding of $Y$, so that $Y \subset \P V$.
Let $$S = \P S^{h_1}V^* \times \ldots \times \P S^{h_c}V^*.$$ Let
$Z \subset Y \times S$ be the universal
family of complete intersections defined by
$$(x,([H_1],\ldots,[H_c])) \in Z \iff \forall i, H_i(x)=0\, .$$ Thus we have morphisms $p:Z \to Y$ and $q:Z \to S$ such
that for general $s=(H_i) \in S$ (i.e. for $s$ in a non empty Zariski open set), 
the inverse image $X_s:=q^{-1}(s)=\cap_{i=1}^c (H_i=0)$ is a complete intersection in $Y$ of multi-degree $(h_i)$.

To proceed by contradiction, assume that semi-stability of the restriction of $\Omega_Y$ to $X$
does not hold.
We will use the relative Harder-Narasimhan filtration relative to $q:Z \to S$ \cite[Theorem 2.3.2]{hl}
(the idea of using this relative version appears e.g. in the proof of \cite[Theorem 7.1.1]{hl}). 
After the choice of a suitable birational projective morphism $f:T \to S$, we can build the following commutative diagram
$$
\xymatrix{
g^*p^* \Omega_Y \ar[d] & p^* \Omega_Y \ar[d] \\ 
Z_T \ar[d] \ar[r]^g & Z \ar[d]_q \ar[r]^p & Y \\
T \ar[r]_f & S.
}
$$
Here, $Z_T$ is the fibered product of $Z$ and $T$.
The pulled-back sheaf $g^*p^* \Omega_Y$ has a filtration
which induces for a general point $s \in S$ the Harder-Narasimhan filtration of $\Omega_{Y|X_s}$. 
We denote by $\F$ the first term of this filtration and by $k$ its rank. 
The assumption that semi-stability fails amounts to saying that $0<k<\dim Y$.
The rank one reflexive
subsheaf $\det\F\,:=\, (\bigwedge\nolimits^k \F)^{**}$
of $p^* \bigwedge\nolimits^k\Omega_{Y}$ is invertible. Since $S$ is smooth, $f$ is an isomorphism in codimension 1;
so the same holds for $g$, and since $Z$ is also smooth, the line bundle $\det\F$ on $Z_T$ defines a line bundle on $Z$ denoted by $\cL$.
It is a subsheaf of $\bigwedge^k p^* \Omega_Y$.
Now, $p$ is a locally trivial morphism with fibers isomorphic to
products of projective spaces, so $\Pic(Z) \simeq \Pic(Y) \times \Pic(S)$, and $\cL$ can be expressed as
$p^* \cL_Y \otimes q^* \cL_S$, for some line bundles $\cL_Y \in \Pic(Y)$ and $\cL_S \in \Pic(S)$.
This is the main feature of considering families: whereas the determinant of a single destabilizing subsheaf
may fail to lie on $\Pic (Y)_{\mid X}$, the determinant of the first term of the relative Harder-Narasimhan filtration does.

Let $d$ be the integer such that $\cL_Y \simeq \Oc_Y(-d)$, and let $X=X_s$.
Given $s \in S$, we have $\cL_{|X \times \{s\}} \simeq \cL_{Y|X}$, and hence
for general $s \in S$, this yields an injection of sheaves $\Oc_Y(-d)_{|X} \subset p^* \Omega_Y$.

Let $h = h_1 \ldots h_c$, we have:
\begin{eqnarray*}
\mu(\OY_{|X})&=&\frac{\Oc_X(1)^{\dim(X)-1}\cdot K_Y}{\rank(\Omega_{Y|X})}=-\frac{c_1(Y)}{\dim(Y)}\cdot h \cdot \deg Y\\
\mu(\F_{|X})&=&\frac{\Oc_X(1)^{\dim(X)-1}\cdot \det\F_{|X}}{\rank(\F)}=-\frac{d}{p}\cdot h \cdot \deg Y\, .
\end{eqnarray*}
Since $\Oc_Y(-d)_{|X}\,\subset\,\Omega_{Y|X}$, it follows that
$$H^0(X,Hom(\Oc_Y(-d)_{|X},\Omega^k_{Y|X}))\,=\,H^0(X,\Omega^k_{Y|X}(d))$$ does not vanish.
Using this and previous results, valid on $\Pic (Y)_{\mid X}$, we deduce the inequality
$$\mu(\F_{|X})\,<\,\mu(\Omega_{Y|X})\, , \ \ i.e.,\ \ d\,>\,k \, \frac{c_1(Y)}{\dim(Y)}\ .$$
This contradicts the construction of $\cL$ as the determinant of the first term of the relative
Harder-Narasimhan filtration.
\end{proof}

\subsection{Variation on a result due to Langer}

To state a general theorem due to Langer, we only assume in this subsection that $Y$ is a smooth projective variety.
We denote by $\Oc_Y(1)$ a polarization of $Y$. We set $d:=c_1(\Oc_Y(1))^{\dim Y}$

\begin{definition}
 \label{defi:Delta}
The discriminant of a rank $r$ vector bundle $E$ on $Y$ is $$\Delta(E):=\left[2rc_2(E)-(r-1)c_1^2(E)\right]\cdot c_1(\Oc_Y(1))^{\dim Y-2}.$$
\end{definition}

\begin{theorem*}[{\cite[Theorem 5.2]{la}}]
Consider a smooth projective variety $Y$.
Consider a $\Oc_Y(1)$-stable vector bundle $E$ of rank $r$ on $Y$.
Let $X$ be a smooth divisor in the complete linear system $\vert\Oc_Y(h)\vert$. 
If
\begin{equation}
 \label{inequality_langer}
 h>\frac{r-1}{r}\Delta(E)+\frac{1}{dr(r-1)}\, ,
\end{equation}
then $E_{\mid X}$ is ${\Oc_Y(1)}_{\mid X}$-stable.
\end{theorem*}

\begin{remark}\label{Langer-bounds}
As noticed in \cite[Remark 5.3.2]{la}, when $r>2$, the inequality (\ref{inequality_langer}) is equivalent
to the inequality $h>\frac{r-1}{r}\Delta(E)$, since $h$ is an integer. Langer also points out that
his Theorem ``can be further improved at the cost of simplicity''. This is what is done in
Proposition \ref{prop:liu}.

The proof of the following result was communicated to us by Jie Liu:

\begin{proposition}
 \label{prop:liu}
 Consider a smooth projective variety $Y$.
 Consider a $\Oc_Y(1)$-stable vector bundle $E$ of rank $r$ on $Y$.
 Let $X$ be a smooth divisor in the complete linear system $\vert\Oc_Y(h)\vert$. 
 Let $m$ be a common divisor of $r$ and the degree of $E$. If
 \begin{equation}
 \label{inequality_liu}
 h>\frac{r-1}{rm}\Delta(E)+\frac{m}{dr(r-1)}\, ,
 \end{equation}
 then $E_{\mid X}$ is ${\Oc_Y(1)}_{\mid X}$-stable.
\end{proposition}
\begin{proof}
In the proof of {\cite[Theorem 5.2]{la}}, the term $\frac{1}{r(r-1)}$ that bounds the difference of two slopes
(namely in Langer's notation $\mu_{\max}(G)-\mu(G)$)
may be replaced by $\frac{m}{r(r-1)}$.
The inequality 
$$0\leq d\Delta (E)-\rho(r-\rho)d^2h^2+r^2(\frac{r-\rho}{r}dh-\frac{1}{r(r-1)})(\frac{\rho}{r}dh-\frac{1}{r(r-1)})$$
where $\rho$ is the rank of a putative maximally destabilizing subsheaf of the restriction $E_{\mid X}$,
may here be replaced by
$$0\leq d\Delta (E)-\rho(r-\rho)d^2h^2+r^2(\frac{r-\rho}{r}dh-
\frac{m}{r(r-1)})(\frac{\rho}{r}dh-\frac{m}{r(r-1)})$$
that is
$$0\leq d\Delta(E)-\frac{rmdh}{r-1}+\frac{m^2}{(r-1)^2}.\qedhere$$
\end{proof}

In particular, if $Y=\Q^3$ and $E=\Omega_Y$, then
$d=c_1(\Oc_{\Q^3}(1))^3=2$, $r=3$, $$\Delta(\Omega_{\Q^3})
=[6\times 4 c_1(\Oc_{\Q^3}(1))^2-2 ((-3)c_1(\Oc_{\Q^3}(1)))^2]\cdot c_1(\Oc_{\Q^3}(1))=12$$ (here we
have used the Euler sequence on $\P^4$ and the normal sequence for $\Q^3$ in $\P^4$
to compute $c_2(\Omega_{\Q^3})=4 c_1(\Oc_{\Q^3}(1))^2$).
The inequality (\ref{inequality_liu}) reads $h>\frac83 + \frac14$.
The inequality $h\geq 3$ for $h=\deg_Y X$ is therefore enough to derive the stability of
the restriction ${\Omega_{\Q^3}}_{\mid X}$.

Hence, the bounds of Langer are
\begin{itemize}
\item for $\P^2$, $h>2$,
\item for $\P^3$, $h>8/3$,
\item for $\Q^3$, $h\geq 3$.
\end{itemize}
\end{remark}

\subsection{Optimal bounds}

With some obvious exceptions, we get the stability of $\Omega_{Y|X}$:
\begin{theorem}\label{theo:restriction_divisor}
Let $Y$ be a compact irreducible Hermitian symmetric space of dimension $2$ or $3$.
Let $X \subset Y$ be a smooth divisor.
\begin{itemize}
\item Take $Y=\P^2$. Then $\Omega_{Y|X}$ is semi-stable if $\deg_Y X \geq 2$.
If $\deg_Y X \geq 3$, then $\Omega_{Y|X}$ is stable.
\item If $Y=\P^3$, assume that $\deg_Y X \geq 2$. Then $\Omega_{Y|X}$ is stable.
\item If $Y=\Q^2$, then $\Omega_{Y|X}$ is semi-stable but not stable.
\item Take $Y=\Q^3$. If $\deg_Y X=1$, then $\Omega_{Y|X}$ is semi-stable with respect to the anti-canonical polarization.
If $\deg_Y X\,\geq\, 2$, then $\Omega_{Y|X}$ is stable.
\end{itemize}
\end{theorem}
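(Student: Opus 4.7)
My plan is to treat each case of the theorem separately, combining Langer's restriction theorem for the high-degree ranges with tailored explicit arguments for the remaining small cases. By Remark~\ref{Langer-bounds}, Langer's theorem gives stability of $\Omega_{Y|X}$ in the ranges $Y=\P^2$ with $\deg X\geq 3$, $Y=\P^3$ with $\deg X\geq 3$, and $Y=\Q^3$ with $\deg X\geq 9$, so the work reduces to the five remaining small-degree or quadric cases.

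Three of these are handled quickly. For $Y=\Q^2\cong\P^1\times\P^1$, the global decomposition $\Omega_{\Q^2}=\O(-2,0)\oplus\O(0,-2)$ shows that for any smooth $X\in|\O_Y(h)|$ (of bidegree $(h,h)$), the restriction $\Omega_{\Q^2}|_X$ is a direct sum of two line bundles of equal $\O_X(1)$-degree $-2h$, hence polystable but not stable. For $\P^2$ with $\deg X=2$, the smooth conic $X\cong\P^1$ has $\O_{\P^2}(1)|_X=\O_{\P^1}(2)$; since the three linear forms on $\P^2$ restrict to a basis of $H^0(\O_{\P^1}(2))$, the restricted twisted Euler sequence forces $H^0(\Omega_{\P^2}(1)|_X)=0$, and a degree count on $\P^1$ then yields $\Omega_{\P^2}|_X\cong\O(-3)\oplus\O(-3)$, strictly semistable. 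For $Y=\Q^3$ with $\deg X=1$, the hyperplane section $X\cong\Q^2$ has conormal $\O_X(-1)$ of $\O_X(1)$-slope $-2$, equal to $\mu(\Omega_{\Q^3}|_X)=-2$; the conormal sequence $0\to\O_X(-1)\to\Omega_{\Q^3}|_X\to\Omega_X\to 0$ already shows non-stability, and semistability follows from the polystability of $\Omega_X$ of slope $-2$ by a slope-filtration argument applied to this sequence.

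The substantive remaining cases are $\P^3$ with $\deg X=2$ and $\Q^3$ with $\deg X=2$; in both, $X$ is a smooth surface with $\Pic(X)$ strictly larger than $\Z\cdot\O_X(1)|_X$, so Theorems~\ref{theo:restriction_hermitian} and~\ref{theo:restriction_exceptions} do not apply directly. For $\P^3$ with $\deg X=2$, $X\cong\P^1\times\P^1$ carries $\O_X(1)=\O(1,1)$ and $\mu(\Omega_{\P^3}|_X)=-8/3$. A rank-$1$ destabilizer would be a line bundle $\O(a,b)$ with $a+b\geq-2$, forcing $H^0(\Omega_{\P^3}|_X(-a,-b))\neq 0$; I would eliminate each such bidegree using the restricted Euler sequence $0\to\Omega_{\P^3}|_X\to\O_X(-1)^4\to\O_X\to 0$ and K\"unneth on $\P^1\times\P^1$ (the only slightly delicate case being $a=b=-1$, where the restriction map $H^0(\P^3,\O(1))\to H^0(X,\O(1,1))$ is an isomorphism). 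Rank-$2$ destabilizers are handled analogously via the Koszul resolution $0\to\Omega^2_{\P^3}|_X\to\O_X(-2)^6\to\O_X(-1)^4\to\O_X\to 0$; only finitely many bidegrees $(a,b)$ with $a+b\in\{-4,-5\}$ need be checked, and each vanishing again reduces to K\"unneth. For $\Q^3$ with $\deg X=2$, $X$ is a smooth $(2,2)$-complete intersection in $\P^4$, i.e.\ a del Pezzo surface of degree $4$ with Picard rank $6$; I would imitate the proof of Theorem~\ref{theo:restriction_hermitian} using the short resolution $0\to\O_{\Q^3}(-2)\to\O_{\Q^3}\to\O_X\to 0$ to convert instability into a nonvanishing $H^i(\Q^3,\Omega^p_{\Q^3}(l))\neq 0$ controlled by Proposition~\ref{prop:general-ineq}.

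The main obstacle is the del Pezzo case. The proof of Theorem~\ref{theo:restriction_hermitian} crucially writes $\det\F\cong\O_X(-d)$ using $\Pic(X)=\Z\cdot\O_X(1)|_X$; without this hypothesis $\det\F$ can be an arbitrary line bundle in a rank-$6$ lattice, and the cohomological reduction back to $\Q^3$ must be preceded by an argument restricting which classes actually arise as determinants of subsheaves of $\Omega_{\Q^3}|_X$. I expect this step to require either an enumeration of effective classes in $\Pic(X)$ combined with the slope inequality via the short resolution, or a more geometric argument bounding lifts of sub-line-bundles of $\Omega_X$ to $\Omega_{\Q^3}|_X$ using the extension class of the conormal sequence.
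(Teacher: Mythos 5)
Most of your case analysis is sound, and in several places you take routes that differ from the paper's but still work: for $\P^3$ with $\deg X=2$ the paper instead exploits the $\mathrm{SL}(2)\times\mathrm{SL}(2)$--equivariance of the Harder--Narasimhan filtration on $\P^1\times\P^1$ (forcing the maximal destabilizer to be one of finitely many equivariant subbundles of slope $2<8/3$), whereas your direct K\"unneth eliminations of the finitely many bidegrees $(a,b)$ via the Euler and Koszul resolutions are an acceptable, more computational substitute; for $\Q^3$ with $\deg X=1$ your observation that $\Omega_{\Q^3}|_X$ is an extension of the polystable $\Omega_{\P^1\times\P^1}$ by $\O_X(-1)$, all of slope $-2$, is cleaner than the paper's explicit classification of sub-line-bundles $\O(d_1,d_2)\subset TQ_{|S}$; and invoking Langer for $\P^2$, $\deg X\geq 3$ is legitimate (the paper notes this and then gives an elementary argument via the gonality of plane curves instead).

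However, there is a genuine gap: you do not prove the case $Y=\Q^3$, $\deg X=2$. You correctly identify that the reduction to a nonvanishing $H^i(\Q^3,\Omega^p_{\Q^3}(l))$ breaks down because $\Pic(X)$ has rank $6$, but you then only state that you ``expect'' the missing step to require an enumeration of effective classes or a geometric lifting argument, without carrying either out. This is in fact the bulk of the paper's proof of the theorem: writing $L=\pi^\star\O_{\P^2}(-a)\otimes\O_S(-\sum b_jE_j)$ in terms of the presentation of the degree-$4$ del Pezzo surface as $\P^2$ blown up at five points, reducing via the conormal sequence to subsheaves of $\Omega_S$, using the explicit rational $1$-forms $\omega_j=(XdY-YdX)/Z^2$ to produce curves $C_j\in|L^\star\otimes K_S\otimes\O_S(2E_0-2E_j)|$ and bounding their degrees by projective-geometric arguments about secant lines and linear spans, and finally eliminating the residual equality case $L=\O_S(-2E_0+2E_j)$ by a separate linear-algebra analysis (Lemma \ref{lemm:section-oq}) of sections of $\Omega_Q(2)_{|S}$ vanishing to order two along two lines, via the rank of the associated matrix $A$. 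None of this is present or sketched in your proposal, so the statement ``if $\deg X=2$ then $\Omega_{\Q^3|X}$ is stable'' remains unproven as written. Note also that stability (not merely semistability) is claimed there, so even a successful slope enumeration must additionally rule out the equality case, which is where the hardest work lies.
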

These cases will be considered in the next subsections.

\subsection{The case of $\P^2$}
Recall the Euler sequence on $\P^2$
\begin{eqnarray*}
0\,\longrightarrow\,\Omega_{\P^2}(1)\,\longrightarrow\, \OP^{\oplus 3}\,\longrightarrow\,
\Oc_{{\P^2}}(1)\,\longrightarrow \,0\, .
\end{eqnarray*}
For a smooth conic $C$, since each section of $H^0(C,\Oc(1)_{\mid C})$
is a restriction of a section on $\P^2$,
the rank $2$ vector bundle $\Omega_{\P^2}(1)_{\mid C}$ of degree $-2$ has no sections.
Therefore, $\Omega_{\P^2}(1)_{\mid C}$ is isomorphic to the direct sum of two line bundles of degree $-1$ on the rational curve $C$.
Consequently, it is semi-stable and not stable.

Let $C$ be a curve of degree $d\geq 3$ in ${\P^2}$. 
By Langer theorem (see Remark \ref{Langer-bounds}) we get the stability of $\Omega_{\P^2}(1)_{\mid C}$.

\subsection{The case of $\Q^2$}

We consider a non degenerate quadric $Q$ in $\P^3$. Recall that it is isomorphic to $\P^1\times\P^1$
in such a way that $\Oc_{\P^3}(1)_{\mid Q}=\Oc_{\P^1\times\P^1}(1,1)$. Note that the tangent bundle $TQ=\Oc(2,0)\oplus\Oc(0,2)$, 
being the sum of two line bundles of the same $\Oc_{\P^3}(1)_{\mid Q}$-degree, is $\Oc_{\P^3}(1)_{\mid Q}$-semi-stable. 
Its restriction ${TQ}_{\mid X}$ to a smooth curve $X\subset Q$ in any linear system $|\Oc_{\P^3}(d)_{\mid Q}|=|\Oc(d,d)|$
is the sum of two line bundles of degree $\Oc(d,d)\cdot \Oc(2,0)=\Oc(d,d)\cdot \Oc(0,2)=2d$.
Hence $TQ_{\mid X}$ is semi-stable for any $d\geq 1$.

\subsection{The case of $\Q^3$}

We now consider a non degenerate quadric $Q$ in $\P^4$ and a smooth hypersurface $S\subset Q$
of degree $d$. By the results in Section~\ref{sec:quadric}, the vector bundle $TQ$ is stable.
The bound in Langer's theorem computed in Remark~\ref{Langer-bounds} is $3$.
Therefore, we conclude that the restriction $TQ_{\mid S}$ is stable if $d\, \geq\, 3$.

We will study the degree $1$ and $2$ cases in the rest of this subsection.

\subsubsection{Linear sections}

For $d=1$, the isomorphism ${\rm Pic}(S)=\Z^2$ is due to the product structure $S\simeq 
\P^1\times\P^1$.

\begin{proposition}\label{prop:Q3-deg1}
If $S$ is a smooth linear section of the solid quadric $Q$, then $TQ_{|S}$ is semi-stable but not stable.
\end{proposition}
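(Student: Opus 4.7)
The plan is to show that $TQ_{|S}$ is in fact polystable of slope $2$, with $TS$ exhibiting a destabilising subsheaf of equal slope. Together this gives semistability but not stability.

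First, one identifies the smooth quadric surface $S$ with $\P^1 \times \P^1$ so that $\O_Q(1)_{|S} = \O(1,1)$; correspondingly $TS = \O(2,0) \oplus \O(0,2)$. Since $S$ is cut out on $Q$ by a hyperplane of $\P^4$, we have $N_{S|Q} = \O_Q(S)_{|S} = \O_Q(1)_{|S} = \O(1,1)$, and the normal sequence on $S$ reads
\[
0 \to \O(2,0) \oplus \O(0,2) \to TQ_{|S} \to \O(1,1) \to 0.
\]
All three line bundles have $\O(1,1)$-slope equal to $2$, so $\mu(TQ_{|S}) = 2 = \mu(TS)$, and stability already fails.

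The next step is to split this extension. By K\"unneth,
\[
\mathrm{Ext}^1(\O(1,1),\, \O(2,0) \oplus \O(0,2)) = H^1(S, \O(1,-1)) \oplus H^1(S, \O(-1,1)),
\]
and both summands vanish since $H^0(\P^1, \O(-1))$, $H^1(\P^1, \O(-1))$ and $H^1(\P^1, \O(1))$ are all zero. Hence $TQ_{|S} \simeq \O(2,0) \oplus \O(0,2) \oplus \O(1,1)$, a direct sum of line bundles of the same $\O(1,1)$-slope, so polystable and in particular semistable.

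No step presents a genuine obstacle: the identification of $N_{S|Q}$ with $\O(1,1)$ is immediate from the hyperplane description of $S$, and the splitting follows from the standard K\"unneth vanishing on $\P^1 \times \P^1$. It is worth noting how the phenomenon degenerates as $d$ grows: the analogous normal sequence will no longer split, and this is exactly what allows stability to be restored for $d \geq 2$ in the quadric case.
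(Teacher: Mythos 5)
Your proof is correct, but it takes a genuinely different route from the paper's. The paper does not split the normal sequence: it first classifies all line subbundles $\O(d_1,d_2)\subset TQ_{|S}$ by a direct section computation (Lemma~\ref{lemm:subbundle-deg1}, pushing $L^{*}\otimes TQ_{|S}$ through the normal and Euler-type sequences and using $\pi_{1,*}$) to get $d_1+d_2\le 2$, and then disposes of rank-two subsheaves by the self-duality $\Omega^2_Q\simeq\Omega_Q(-1)$, which reduces them to the rank-one case. You instead observe that
$\mathrm{Ext}^1\bigl(\O(1,1),\,\O(2,0)\oplus\O(0,2)\bigr)=H^1(S,\O(1,-1))\oplus H^1(S,\O(-1,1))=0$,
so the normal sequence splits and $TQ_{|S}\simeq\O(2,0)\oplus\O(0,2)\oplus\O(1,1)$ is a direct sum of line bundles of slope $2$. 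This is shorter and yields strictly more: the full polystable decomposition (hence also a description of the slope-$2$ subsheaves as saturations of partial sums of the three factors), whereas the paper's subsheaf-by-subsheaf analysis is the template that continues to work for the degree-$2$ sections treated next, where no such splitting exists. Your argument is complete; the only step worth making explicit is the standard fact that a direct sum of stable bundles of equal slope is semistable (any subsheaf of larger slope would admit a nonzero map to one of the stable summands, which is impossible), since that is the load-bearing point once the splitting is established. The computations themselves — $N_{S|Q}=\O_Q(1)_{|S}=\O(1,1)$, $TS=\O(2,0)\oplus\O(0,2)$, and the K\"unneth vanishing — are all correct.
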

\begin{proof}
Consider a putative destabilizing sheaf $\F \subset TQ_{|S}$. Assume that the rank of $\F$ is one.
Replacing $\F$ by its reflexive hull, we get an exact sequence
\begin{equation}
\label{equa:sequence}
0 \to L \to TQ_{|S} \to E
\end{equation}
with $L$ a line bundle and $E$ a rank 2 vector bundle. Moreover, we have $\deg(\F)=\deg(L)$, thus
to prove semi-stability it suffices to show that the existence of such an exact sequence 
implies that $\deg(L) \leq \mu(TQ_{|S})=2$.

Let us write $S=\P^1 \times \P^1 \stackrel{\pi_1,\pi_2}{\rightarrow} \P^1$,
and $L = \Oc(d_1,d_2) := \pi_1^* \Oc(d_1) \otimes \pi_2^* \Oc(d_2)$.
For example, for $L = \pi_1^* T\P^1$, we have an exact sequence as in (\ref{equa:sequence}),
and $L \simeq \Oc(2,0)$ so $\deg(L)=2$. In particular $TQ_{|S}$ can not be stable.
The semi-stability inequality is proved in the following Lemma \ref{lemm:subbundle-deg1}.

\begin{lemma}
\label{lemm:subbundle-deg1}
With the notation of the proof of Proposition \ref{prop:Q3-deg1}, let $\Oc(d_1,d_2)$ be a subbundle
of $TQ_{|S}$. Then, we have $d_1+d_2 \,\leq\, 2$. 
\end{lemma}

\begin{proof}
Since $L=\Oc(d_1,d_2)$ is assumed to be a subbundle of $TQ_{|S}$, there is a non 
vanishing section of $L^* \otimes TQ_{|S}$. There is an exact sequence of sections on $S$:
$$ H^0(L^* \otimes TS) \to H^0(L^*\otimes TQ_{|S}) \to H^0(L^*\otimes \Oc_S(1) )\ .$$
We have $L^* \otimes \Oc_S(1) \simeq \Oc(1-d_1,1-d_2)$ and
$L^* \otimes TS \simeq \Oc(2-d_1,-d_2) \oplus \Oc(-d_1,2-d_2)$.

Assume that $d_1>1$. Then $H^0(L^*(1)))=0$ since $\pi_{1*} L^*(1) = 0$.
Thus $$H^0(L^*\otimes TQ_{|S}) = H^0(L^* \otimes TS) = H^0(\Oc(2-d_1,-d_2))\, .$$
By the same argument, this space of sections is not equal to $\{0\}$ if and only if
$d_1 = 2$ and $d_2 \leq 0$. Moreover, there will be non vanishing sections if and only if
$d_2=0$. We get $(d_1,d_2)=(2,0)$ (so $L$ is isomorphic to $\pi_1^* T\P^1$).

Similarly, we can deal with the case $d_2>1$. In the remaining cases we indeed have
$d_1+d_2 \leq 2$ (asserted in the lemma).
\end{proof}

We now finish the proof of Proposition \ref{prop:Q3-deg1}.
Consider a rank two subsheaf $F\subset {TQ}_{\mid S}$.
Its determinant is a line subbundle of ${\wedge^2TQ}_{\mid S}$.
As $TY(-1)=\Oc(-1)^\perp/\Oc(-1)$ is self-dual, we have
$${\wedge^2TQ}_{\mid S}={\Omega_Q^2}_{\mid S}(4)={K_Q\otimes TQ}_{\mid S}(4)={TQ}_{\mid S}(1).$$ 
Hence we get an inclusion $\det F\otimes \Oc(-1)_{\mid S}\subset {TQ}_{\mid S}$.
We already checked that line subbundles do not destabilize ${TQ}_{\mid S}$. Therefore, we infer that
$2\mu(F)-2\leq \mu({TQ}_{\mid S})=2$, which implies the desired
semi-stability inequality $$\mu(F)\leq\mu({TQ}_{\mid S}).\qedhere$$
\end{proof}

\subsubsection{Quadric sections}
For $d=2$, the surface $S$, intersection of two quadrics in $\P^4$,
is a Del Pezzo surface of degree $4$ meaning $$(-K_S)\cdot (-K_S)=4$$
(see \cite[Definition 8.1.12]{dolgachev}); it is known as a Segre quartic surface.
Its Picard group ${\rm Pic(S)}$ is isomorphic to $\Z^6$ with precise generators given by the abstract description of $S$ 
as the projective plane $\P^2$ blown-up at $5$ points in general position
(see\cite[page 550]{griffiths-harris}, \cite[Proposition 8.1.25]{dolgachev}). Recall the diagram
$$\xymatrix{Bl_p(S)\ar[d]^\mu \ar@{=}[r]^\phi&\Sigma\ar@{^(-}[r]^{\iota}\ar[d]^b&\P^3\\
S\ar[r]^\pi&\P^2&}$$
where $\mu$ is the blow up of $S$ at a point $p$ on $S$ not on a line of $S$,
$\iota\circ\phi$ is given by the linear system of lines in $\P^4$ passing through $p$
(its image is a smooth cubic $\Sigma$),
$b$ is the blow up of $\P^2$ at six points, $\iota$ is given by the linear 
system of cubics in $\P^2$ passing through the blown-up six points, and
$\pi$ is gotten from $\phi$ by the universal property of blow ups.
With $E=\sum_{i=1}^5E_i$ the sum of the five exceptional lines and $\jmath :E\to S$ the natural inclusion, the main relations
among sheaves in the two descriptions of $S$ are
\begin{eqnarray*}
\Oc_{\P^4}(1)_{\mid S}=\pi^*\Oc_{\P^2}(3)\otimes\Oc_S(-E)
\end{eqnarray*}
and
\begin{eqnarray}\label{eq:cotE}
0\to\pi^*\Omega_{\P^2}\to\Omega_S\to\jmath_*\Omega_E\to 0.
\end{eqnarray} 

Before stating Proposition \ref{prop:Q3-deg2} which is the main result of this section, we start
recalling a result of Fahlaoui:

\begin{lemma}
 \label{lemm:fahlaoui}
 Let $0 \neq \omega \in H^0(\P^2,\Omega_{\P^2}(2)) \simeq \C^3$ and let $j \in \{1,\ldots,5\}$.
 Consider the pull-back section $\pi^* \omega \in H^0(S,\Omega_S \otimes \pi^* \Oc(2))$.
 If the class of $\omega$ in $\P^2$ is the point $p_j$, then $\pi^* \omega$ vanishes
 at order exactly $2$ along the exceptional divisor $E_j$. Otherwise, it does not vanish along $E_j$.
\end{lemma}
\begin{proof}
Let us assume that the section $\omega$ of
$\Omega_{\P^2}(2)$ is given in homogeneous coordinates $[X:Y:Z]$
such that the blown up point $p_j$ is $[0:0:1]$ by
$$\omega := \frac{XdY-YdX}{Z^2}=\left(\frac{X}{Z}\right)^2 d\left(\frac{Y}{X}\right)\, .$$
In \cite[Exemple 1]{fahlaoui}, it is shown that its pull-back $\pi^*\omega$ on $S$ has poles only along
the strict transform $E_0$ of the line $(Z=0)$ with order two, 
and vanishes with multiplicity two along the exceptional divisor $E_j$ above $p_j$:
$$\pi^*\omega \in H^0(S,\Omega_S\otimes\Oc_S(2E_0-2E_j))\, .$$

Thus the lemma is proved in this case. Since $H^0(\P^2,\Omega_{\P^2}(2)) \simeq \C^3$ and
$\P^2$ is homogeneous under $SL_3$, using the group action, we see that this result holds
whenever the class of $\omega$ corresponds to $p_j$. If this is not the case, then $\omega$
does not vanish at $p_j$, which obviously implies that $\pi^* \omega$ does not vanish along
$E_j$.
\end{proof}

\begin{proposition}
\label{prop:Q3-deg2}
If $S$ is a smooth quadric section of the solid quadric $Q$, then
${\Omega_Q}_{\mid S}$ is stable.
\end{proposition}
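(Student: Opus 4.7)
The plan is to follow the strategy of Proposition~\ref{prop:Q3-deg1}: exploit the isomorphism $\Omega_Q^2 \simeq \Omega_Q(-1)$ on the three-fold quadric $Q$ to reduce the stability of $\Omega_Q|_S$ to a bound on line subsheaves. A rank-two subsheaf $F \subset \Omega_Q|_S$ yields $\det F \otimes \O_S(1) \hookrightarrow \wedge^2 \Omega_Q|_S \otimes \O_S(1) \simeq \Omega_Q|_S$, and tracking degrees shows that $\mu(F) < \mu(\Omega_Q|_S) = -4$ will follow from the strict bound $\deg L < -4$ for every line subsheaf $L \subset \Omega_Q|_S$.

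To bound such line subsheaves, I would first split cases via the conormal sequence
\[
0 \to \O_S(-2) \to \Omega_Q|_S \to \Omega_S \to 0 \ .
\]
If $L$ factors through $N^*_{S|Q} = \O_S(-2)$ then $\deg L \leq -8$; otherwise the induced morphism $L \to \Omega_S$ is injective (as $L$ is rank-one torsion-free). By the cotangent sequence~(\ref{eq:cotE}), whose cokernel $\jmath_\star \Omega_E$ is torsion, this image must lie in $\pi^\star \Omega_{\P^2}$. So it suffices to show that every line subsheaf of $\pi^\star \Omega_{\P^2}$ has degree at most $-5$ with respect to $\O_S(1)$.

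For the main step, write $L = \pi^\star \O_{\P^2}(a) \otimes \O_S(\sum_i b_i E_i)$, so that $\deg L = 3a + \sum_i b_i$. Any embedding $L \hookrightarrow \pi^\star \Omega_{\P^2}$ factors through the line bundle obtained by setting the negative $b_i$ to zero (a degree-increasing operation, since over the corresponding $E_i$ any map to $\pi^\star \Omega_{\P^2}|_{E_i} \simeq \O_{\P^1}^{\oplus 2}$ must vanish), so we may assume $b_i \geq 0$. The projection formula identifies the existence of such an embedding with the nonvanishing of
\[
H^0\bigl(\P^2 ,\, \Omega_{\P^2}(-a) \otimes \textstyle\bigotimes_i \mathfrak{m}_{p_i}^{b_i}\bigr) \ .
\]
The vanishing of $H^0(\P^2, \Omega_{\P^2}(k))$ for $k \leq 1$ forces $-a \geq 2$, and comparing $\dim H^0(\P^2, \Omega_{\P^2}(k)) = k^2 - 1$ with the number of independent linear conditions imposed by the five points gives $\sum_i b_i(b_i+1) \leq (-a)^2 - 1$. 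A short check for $-a = 2$ together with the Cauchy--Schwarz estimate $\sum_i b_i^2 \geq (\sum_i b_i)^2/5$ for $-a \geq 3$ then yields $\sum_i b_i \leq 3(-a) - 5$, so $\deg L \leq -5$, as required.

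The main obstacle is this last numerical argument and, especially, the verification that the vanishing conditions imposed at the five blown-up points on sections of $\Omega_{\P^2}(k)$ are genuinely independent. This is where the geometric hypothesis that $S$ is a smooth Del Pezzo surface of degree~4 enters essentially: it forces the five points to lie in sufficiently general position (in particular, no three collinear), which yields the required independence and hence the combinatorial bound.
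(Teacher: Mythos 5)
The decisive step of your argument --- ``the cokernel $\jmath_\star\Omega_E$ of~(\ref{eq:cotE}) is torsion, hence the image of $L$ must lie in $\pi^\star\Omega_{\P^2}$'' --- is false, and it fails exactly at the borderline degree. A line bundle can map nontrivially to a torsion sheaf supported on a divisor: here $\mathrm{Hom}(L,\jmath_\star\Omega_{E_i})=\mathrm{Hom}(L_{|E_i},\O_{E_i}(-2))$, which is nonzero as soon as $\deg L_{|E_i}\leq -2$, i.e.\ $b_i\geq 2$. In general one only gets $L(-D)\hookrightarrow\pi^\star\Omega_{\P^2}$ for some effective $D$ supported on the exceptional locus, and since twisting down by exceptional curves \emph{lowers} the $\O_S(1)$-degree, your bound $\deg\leq -5$ for subsheaves of $\pi^\star\Omega_{\P^2}$ does not transfer to subsheaves of $\Omega_S$. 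Indeed your intermediate goal is simply not true: the rational form $\omega_j=(XdY-YdX)/Z^2$ pulls back to a regular section of $\Omega_S\otimes\O_S(2E_0-2E_j)$, exhibiting $\O_S(-2E_0+2E_j)\hookrightarrow\Omega_S$ of degree exactly $-4=\mu(\Omega_{Q}{}_{|S})$; only its twist $\O_S(-2E_0+E_j)$, of degree $-5$, lands in $\pi^\star\Omega_{\P^2}$. So the route through $\pi^\star\Omega_{\P^2}$ can at best give $\deg L\leq -4$, i.e.\ semistability.

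Upgrading to stability is where the real work lies, and your proposal contains no substitute for it: one must show that the degree $-4$ subsheaves $\O_S(-2E_0+2E_j)$ of $\Omega_S$ do \emph{not} lift through the conormal sequence to subsheaves of $\Omega_{Q}{}_{|S}$. This is a statement about the extension class, and the paper proves it as Lemma~\ref{lemm:section-oq}: a nonzero section of $\Omega_Q(2)_{|S}$ cannot vanish to order two along two secant lines ($E_j$ and the conic class $C$), established by lifting to $\Omega_{\P^4}(2)_{|S}$, encoding the section as a matrix $A$ with $A+\tr A$ in the pencil of the two quadrics, and analysing its rank and eigenspaces. Two further, more minor, points: the independence of the vanishing conditions imposed by the five points on $H^0(\P^2,\Omega_{\P^2}(k))$, which you flag but do not prove, is a genuine interpolation problem for a rank-two bundle and does not follow merely from ``general position''; and note that the five points of a quartic Del Pezzo are a \emph{specific} (not arbitrary general) configuration determined by $S$. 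The rank-two reduction via $\Omega_Q^2\simeq\Omega_Q(-1)$ is correct and is the same device used at the end of the proof of Proposition~\ref{prop:Q3-deg1}.
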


The proof runs through the rest of this subsection. We denote by $E_0$ the strict transform of a general line in $\P^2$
so that $\Oc_S(E_0)=\pi^*\Oc_{\P^2}(1)$.
To begin with, consider a line bundle 
$$L=\pi^*\Oc_{\P^2}(-a)\otimes\Oc_S(-\sum_{j=1}^5 b_jE_j)$$
with an inclusion $L\subset{\Omega_Q}_{\mid S}$ which is
seen as a non zero element of $H^0(S,Hom(L, {\Omega_Q}_{\mid S}))$.
\begin{eqnarray*}
\mu({\Omega_Q}_{\mid S})&=&\frac{K_Q\cdot \Oc_{\P^4}(1)_{\mid S}}{3}=\frac{(-5+2)1\times 2\times 2}{3}=-4\\
\mu(L)&=&L\cdot \Oc_{\P^4}(1)_{\mid S}=-3a-\sum b_j\, .
\end{eqnarray*}
\textsl{First step.}
We first intend to show the semi-stability inequality $\mu(L)\leq \mu({\Omega_Q}_{\mid S})$ that is
$$3a+\sum b_j\geq 4,$$
and show that equality can occur only if $L=\Oc_S(-2E_0+2E_j)$.
By the general argument, 
this is ensured if $L$ is the restriction of a line bundle on $\P^4$ i.e.,
a multiple of $\Oc_{\P^4}(1)_{\mid S}$.

The conormal sequence for $S$ in $Q$ reads
\begin{eqnarray}\label{conormal}
0\to \Oc_{\P^4}(-2)_{\mid S}\to {\Omega_Q}_{\mid S}\to \Omega_S\to 0\, .
\end{eqnarray}
If $H^0(Hom(L,\Oc_{\P^4}(-2)_{\mid S}))\neq 0$,
then $\mu(L)\leq \mu (\Oc_{\P^4}(-2)_{\mid S})=-8< \mu({\Omega_Q}_{\mid S})$,
and the desired inequality is proved, in its strict version.

{}From now on, we will assume that $$H^0(Hom(L,\Oc_{\P^4}(-2)_{\mid S}))= 0\, .$$
Hence a non-zero element in $H^0(S,Hom(L, {\Omega_Q}_{\mid S}))$ 
gives a non-zero element in
$$H^0(S,Hom(L, \Omega_S))=H^0(S,\Omega_S\otimes\Oc_{\P^2}(a)\otimes\Oc_S(\sum b_jE_j))\, .$$
In particular, we have an injection
\begin{equation}
\label{equa:injection}
H^0(S,Hom(L, \Omega_S)) \,\hookrightarrow\,
H^0(S \setminus \cup E_j,\Omega_S\otimes\pi^*\Oc_{\P^2}(a))\,=\,
H^0(\P^2 \setminus \cup p_j,\Omega_{\P^2}(a))
\,=\,H^0(\P^2,\Omega_{\P^2}(a))
\end{equation}
and from the Euler sequence with $V:=H^0(\P^2,\Oc_{\P^2}(1))$
$$0\to \Oc_{\P^2}(a-3)\to V^*\otimes\Oc_{\P^2}(a-2)\to T_{\P^2}(a-3)=\Omega_{\P^2}(a)\to 0$$
this leads to $a\geq 2$.

Let $j$ be an integer between $1$ and $5$.
To make use of the sequence~(\ref{eq:cotE}),
we consider a section $\omega_j$ in $H^0(\P^2,\Omega_{\P^2}(2)) \simeq \C^3$ corresponding
to $p_j$, so that, by Lemma \ref{lemm:fahlaoui}, 
$$\pi^*\omega_j\in H^0(S,\Omega_S\otimes\Oc_S(2E_0-2E_j)).$$

If $H^0(Hom(L,\Oc_{S}(-2E_0+2E_j)))\neq 0$, then $\mu(L)\leq \mu (\Oc_{S}(-2E_0+2E_j))=-4=\mu({\Omega_Q}_{\mid S})$
with equality if and only if $L$ is isomorphic to $\Oc_{S}(-2E_0+2E_j)$.

We assume from now on that $H^0(S,Hom(L, \Omega_S))\neq 0$ and that for all $j$,
$H^0(Hom(L,\Oc_{S}(-2E_0+2E_j)))= 0$.
The rational form $\pi^*\omega_j$ yields the sequence
$$0\to\Oc_S(-2E_0+2E_{j})\to\Omega_S\to K_S\otimes\Oc_S(2E_0-2E_j)$$
and after a twist by $L^*$ a map
$$H^0(Hom(L,\Omega_S))\to H^0(L^*\otimes K_S\otimes\Oc_S(2E_0-2E_j))$$
that is injective as $H^0(Hom(L,\Oc_{S}(-2E_0+2E_j)))= 0$, and gives a curve $C_j$ in the linear system
$|L^*\otimes K_S\otimes\Oc_S(2E_0-2E_j)|
=|\pi^*\Oc_{\P^2}(a-1)\otimes\Oc_S((b_j-1)E_j)\otimes\Oc_S(\sum_{k\neq j}(b_k+1)E_k)|$.

The curves $C_j$ are of degree $d=C_j\cdot \Oc_{\P^4}(1)_{\mid S}=3(a-1)+\sum b_j-1+4=\mu(L^*)$.
We hence have to show that $d\geq 4$.
Denote by $C'_j$ the sum of irreducible components of $C_j$ that are not contracted by $\pi$
and by $d'_j\leq d$ its degree. The class of $C'_j$ lies in some linear system
$$|\pi^*\Oc_{\P^2}(a'_j-1)\otimes\Oc_S((b'_j-1)E_j)
\otimes\Oc_S(\sum_{k\neq j}(b'_{jk}+1)E_k) |.$$
As $C'_j-C_j$ consists of effective exceptional curves, $a'_j= a$, $b'_j\leq b_j$ and $b'_{jk}\leq b_k$.

As $E_j$ is a line that is not a component of $C'_j$,
$-b'_j+1=C'_j\cdot E_j\leq d'_j$. Hence,
$b_j\geq b'_j\geq -d'_j+1\geq -d+1$.
The output is
$d=3a+\sum_i b_i\geq 3a-5d+5$ that is 
\begin{eqnarray}\label{eq:d} d\geq \frac{a}{2}+\frac{5}{6}.\end{eqnarray}

Assume $a=2$.
Using the injection (\ref{equa:injection}) and Lemma \ref{lemm:fahlaoui}, a section
in $H^0(S,\Omega_S\otimes\pi^*\Oc_{\P^2}(a))$ can only vanish at order at most $2$ along one exceptional divisor and does not vanish along
the other divisors.
Thus, for all $i$, it holds $b_i \geq -2$,
and there is at most one negative coefficient $b_i$.
This bundle $L=\pi^*\Oc_{\P^2}(-2)\otimes\Oc_S(-\sum_{j=1}^5 b_jE_j)$
is of degree $-6-\Sigma b_i$, which is no more than $-4$, and
we reach the desired inequality.

We now assume $a\geq 3$. It follows from~\eqref{eq:d} that $3\leq d$.
Assume that for some $j$, $C'_j\cdot E_j\geq 3$. Then $d'_j \geq 3$. 
As $S$ is the intersection of two quadrics and as a quadric does not contain any plane curve of degree bigger than $2$,
the curve $C'_j$ cannot be a plane curve. 
Hence, choose a point $x \in C'_j \setminus E_j$ and consider the plane
$\P^2$ generated by the line $E_j$ and $x$.
We get $$d'_j\geq\# \{ \P^2 \cap C'_j \} \geq C'_j\cdot E_j+1\geq 4.$$
 If the linear span $\scal{C'_j}$ of $C'_j$ is a $3$-plane 
then $\scal{C'_j} \cap S$ contains $C'_j \cup E_j$, of degree $d'_j+1\geq 5$,
contradicting $\deg S=4$.
Hence the linear span of $C'_j$ is the whole $\P^4$.
We can choose a line $\ell$ secant to $C'_j$ on two points and disjoint from $E_j$,
and we consider the $3$-plane $\P^3$ generated by $\ell$ and $E_j$. 
We infer $d'_j\geq \# \{ \P^3 \cap C'_j \} \geq C'_j\cdot E_j+2\geq 5$
concluding for stability.

Thus, for all $j$, we have $C'_j\cdot E_j\leq 2$. Therefore $-b'_j+1\leq 2$, $b'_j\geq -1$.
Hence, $d=3a+\sum b_i \geq 3\times 3+5(-1)=~4$, with equality occurring if and only if
$L$ is isomorphic to $\pi^*\Oc_{\P^2}(-3)\otimes\Oc_S(\sum E_i) = \Oc_{\P^4}(-1)_{\mid S}$.
However, we already know that this is not possible.

\textsl{Second step.}
We now prove stability, dealing with the only equality case we encountered.
Namely we will show that $L\,=\,\Oc_S(-2E_0+2E_j)$ is not a subsheaf of $\Omega_Q$.
Let $C=2E_0-\sum E_i$ be the class of the strict transform of the conic in $\P^2$ passing through the five
points $p_i$. Observe that $L=\Oc_{\P^4}(-2) \otimes \Oc_S(2C+2E_j)$, thus a section of $L^* \otimes \Omega_Q$
is a section of $\Omega_Q(2)_{|S}$ that vanishes at order $2$ along $E_j$ and $C$.

The proof of Proposition \ref{prop:Q3-deg2} will therefore be complete once the following lemma is proved:
\begin{lemma}
\label{lemm:section-oq}
Let $s \in H^0(S,\Omega_Q(2)_{|S})$ a non-vanishing section, and let $\Delta_1,\Delta_2$ be two secant lines.
Then $s$ does not vanish at order two along $\Delta_1$ and $\Delta_2$.
\end{lemma}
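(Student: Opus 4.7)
The plan is to globally lift the section and analyze the constraints coming from order-$2$ vanishing. Combining the Euler sequence on $\P^4$ with the Koszul resolution of $\mathcal{I}_S=(F_Q,F_{Q'})$, a cohomology computation identifies
$$H^0(S,\Omega_{\P^4}(2)|_S)\,\simeq\,\Lambda^2 V^*\oplus\C\,F_Q\oplus\C\,F_{Q'},$$
where $V=H^0(\P^4,\O(1))^*$ and the symmetric summand is obtained via the factorization $V^*\otimes V^*\to S^2 V^*\to H^0(\O_S(2))$. The kernel of the surjection onto $H^0(S,\Omega_Q(2)|_S)$ is the $1$-dimensional $\C\cdot F_Q$ (corresponding to the conormal section $\iota(1)=dF_Q|_S$), so $s$ is uniquely encoded by a pair $(\sigma,\mu)\in\Lambda^2 V^*\oplus\C$. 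Pointwise, $s$ vanishes at $[x]\in S$ iff $\sigma(x,\cdot)+\mu B_{Q'}(x,\cdot)\in\C\cdot B_Q(x,\cdot)$ in $V^*$, with $B_Q,B_{Q'}$ the symmetric bilinear forms of $F_Q,F_{Q'}$.

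I would then analyse the pointwise vanishing of $s$ along the two secant lines $\Delta_i=\P(W_i)$ meeting at $p=[e_1]$. Since $W_i$ is isotropic for both $B_Q$ and $B_{Q'}$, a linear-versus-quadratic degree comparison in $x\in W_i$ forces the proportionality scalar $c_i$ to be a constant. Matching at $p$ gives $c_1=c_2=:c$, and evaluating antisymmetric versus symmetric parts at the crossed slot $(e_2,e_3)\in W_1\times W_2$ yields $\sigma(e_2,e_3)=0$ together with the key relation $cA=\mu A'$, where $A:=B_Q(e_2,e_3)\neq 0$ (else $W_1+W_2=\langle e_1,e_2,e_3\rangle$ would be a $3$-dimensional $B_Q$-isotropic subspace of $\C^5$, impossible by nondegeneracy). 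This determines $c$ from $\mu$ and pins down the entries $\sigma_{ij}$ for $(i,j)\in\{1,2,3\}\times\{4,5\}$ in terms of $(\mu,c)$, leaving only $\mu$ and $\sigma_{45}$ free. Taylor-expansion along a tangent direction $\xi\in T_{x_0}S$ normal to $\Delta_i$ at $x_0\in W_i$ yields the order-$2$ constraint $T(\xi,\cdot):=(\sigma+\mu B_{Q'}-cB_Q)(\xi,\cdot)\in\C\cdot B_Q(x_0,\cdot)$; the first-order constraints already force $T(e_k,\cdot)=0$ for $k\in\{1,2,3\}$, so $T(\xi,\cdot)$ depends only on $(\xi_4,\xi_5)$. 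Solving $B_Q(x_0,\xi)=B_{Q'}(x_0,\xi)=0$ in $(\xi_3,\xi_4,\xi_5)$ and eliminating $\xi_3$ shows that the projection of the normal direction to $\Delta_i$ onto $\langle e_4,e_5\rangle$ sweeps out a non-constant projective $\P^1$-family as $x_0$ varies along $\Delta_i$, forcing all $T_{k\ell}$ with $k\in\{4,5\}$ to vanish. The antisymmetric part $T_{45}-T_{54}=2\sigma(e_4,e_5)$ then gives $\sigma_{45}=0$, while the symmetric part combined with $cA=\mu A'$ reads $\mu\,(AB_{Q'}(e_k,e_\ell)-A'B_Q(e_k,e_\ell))=0$. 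Since $B_Q,B_{Q'}$ are linearly independent (otherwise $Q=Q'$ and $\dim S=3$), at least one such coefficient is nonzero, forcing $\mu=0$, hence $c=0$ and $\sigma=0$, so $s=0$, contradicting non-vanishing.

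The main obstacle is the order-$2$ analysis: carefully tracking how the projection $(\xi_4,\xi_5)$ of the normal direction sweeps $\P^1$ as $x_0$ moves along $\Delta_i$, and extracting the correct combinatorial information from the antisymmetric versus symmetric parts of the tensor $T$. The key geometric ingredients are the nondegeneracy of $B_Q$ on $\C^5$ (which forces $A\neq 0$) and the linear independence of $B_Q$ and $B_{Q'}$ (which follows from $\dim S=2$).
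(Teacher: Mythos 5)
Your framework is the same as the paper's: you lift $s$ to $H^0(S,\Omega_{\P^4}(2)_{|S})$, identify such lifts with matrices of the form (alternating) $+$ (member of the pencil) — this is exactly the paper's Fact that $A+\tr A\in\langle I,D_2\rangle$ — and translate the vanishing of $s$ at $[x]$ into $\sigma(x,\cdot)+\mu B_{Q'}(x,\cdot)\in\C\,B_Q(x,\cdot)$. Your first-order analysis is correct and reproduces the paper's first reduction: the scalar is constant on each line and equal on both, so after subtracting $cB_Q$ the residual tensor $T$ kills the plane $\langle e_1,e_2,e_3\rangle$ spanned by the two lines, hence has rank at most $2$; and $B_Q(e_2,e_3)\neq 0$ because a $3$-dimensional $B_Q$-isotropic subspace of $\C^5$ is impossible.

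The gap is in the second-order step, which is the actual content of the lemma. You assert that as $x_0$ moves along $\Delta_i$ the projection of the transverse tangent direction to $\langle e_4,e_5\rangle$ sweeps out a non-constant family, and deduce $T(e_4,\cdot)=T(e_5,\cdot)=0$; this is exactly the hard point and it is not proved. The pointwise conditions $T(\xi,\cdot)\in\C\,B_Q(x_0,\cdot)$ do not by themselves force $T=0$: for instance a rank-one $T$ with $T(v,\cdot)=\phi(v)f$ only requires $T_{x_0}\widehat S\subset\ker\phi$ away from at most one point of each line, and ruling this out needs the structural input that $T$ is alternating plus a pencil member together with the smoothness of $S$ — this is precisely the paper's rank-one case, handled by a matrix lemma (such a matrix is supported on a $2\times 2$ block, forcing a member of the pencil to have rank $\le 2$, contradicting smoothness), while the rank-two case is excluded by the projective fact that $\overline{T_xS}\neq\pi$ for $x\in\Delta_1\setminus\Delta_2$. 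Your final deduction is also not airtight as written: knowing $cB_Q=\mu B_{Q'}$ on the blocks $\{1,2,3\}^2$ and $\{4,5\}^2$ does not control the cross-block entries, and a member of the pencil of rank exactly $4$ (which smoothness permits) can vanish on both blocks without vanishing identically, so linear independence of $B_Q,B_{Q'}$ alone does not give $\mu=0$. To close the argument you need either to actually establish the sweep claim (which amounts to redoing the paper's tangent-plane fact) or to adopt the paper's rank dichotomy.
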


We will prove this lemma after some preliminary results. First, let us denote by $Q_2$ a quadric cutting out $S$
in $Q$. By simultaneous reduction of quadratic forms, we may assume that the quadric $Q$ is defined by the
identity matrix $I$ and $Q_2$ by some diagonal matrix $D_2$.

Since $H^1(S,\Oc_S)=0$, the section $s$ lifts to a section $\tilde s \in H^0(S,\Omega_{\P^4}(2)_{|S})$. We will
have to consider affine cones: let $U = \C^5 \setminus \{0\}$ and let $p:U \to \P^4$. Whenever
$Z \subset \P^4$ is a subvariety, we denote by $\widehat Z =p^{-1}(Z)$ its affine cone. The section $\tilde s$
defines a section $\wh s \in H^0(\wh S,\Omega_{U|\wh S})$, which can be written as
$\wh s = \sum_{i,j} a_{i,j} Z_i dZ_j$ ($Z_j$ denotes the $j$-th coordinate function on $\C^5$).
We denote by $A$ the matrix $(a_{i,j})$.
Since $\wh s$ is the pull-back of the section $\tilde s$, we have:
\begin{fact}
\label{fact:A-skew}
$A + \tr A$ belongs to the span of $I$ and $D_2$.
\end{fact}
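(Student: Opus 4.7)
The plan is to read off the condition on $A$ from the Euler sequence of $\P^4$, twisted by $\O(2)$ and restricted to $S$,
\[
0 \to \Omega_{\P^4}(2)_{|S} \to V \otimes \O_S(1) \to \O_S(2) \to 0,
\]
in which the right-hand map is the natural multiplication. Since $S$ is a complete intersection of the two quadrics corresponding to the matrices $I$ and $D_2$, the Koszul resolution of $\I_S$ yields $H^0(S,\O_S(1)) = V$ and $H^0(S,\O_S(2)) = S^2 V / \scal{F_Q, F_{Q_2}}$. Taking global sections therefore identifies $H^0(S,\Omega_{\P^4}(2)_{|S})$ with the space of tensors $A = (a_{ij}) \in V \otimes V$ whose image $\sum_{i,j} a_{ij} Z_i Z_j \in S^2 V$ lies in $\scal{F_Q, F_{Q_2}}$.

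Next I would match this description with the explicit formula $\wh s = \sum a_{ij} Z_i\, dZ_j$ on $\wh S$. Under the Euler sequence, the tensor $A$ corresponds to the tuple of linear forms $l_j := \sum_i a_{ij} Z_i$ ($0 \leq j \leq 4$), and the associated section of $\Omega_{\P^4}(2)_{|S}$ pulls back on $\wh S$ to the $1$-form $\sum_j l_j\, dZ_j = \sum_{i,j} a_{ij} Z_i\, dZ_j$, which is exactly the expression for $\wh s$. Contraction with the Euler vector field $E = \sum_i Z_i \partial_{Z_i}$ sends $\wh s$ to the quadratic function $\sum_{i,j} a_{ij} Z_i Z_j$, and the condition that $\wh s$ come from the subsheaf $\Omega_{\P^4}(2)_{|S}$ is precisely the vanishing of this function on $\wh S$, i.e.\ $\sum a_{ij} Z_i Z_j \in \scal{F_Q, F_{Q_2}}$.

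To conclude, the quadratic form $\sum_{i,j} a_{ij} Z_i Z_j$ corresponds to the symmetric matrix $\tfrac{1}{2}(A + \tr A)$, while $F_Q$ and $F_{Q_2}$ correspond to $I$ and $D_2$ respectively; the previous condition is therefore equivalent to $A + \tr A \in \scal{I, D_2}$, as required. The only slightly delicate point is the bookkeeping with the Euler sequence conventions and the identification of the symmetrization map $V \otimes V \to S^2 V$ with the operation $A \mapsto \tfrac{1}{2}(A+\tr A)$; there is no genuine mathematical obstacle.
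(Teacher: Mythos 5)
Your argument is correct and is exactly the justification the paper leaves implicit (the Fact is stated without proof as a consequence of $\wh s$ being the pullback of $\tilde s$): via the Euler sequence $0 \to \Omega_{\P^4}(2)_{|S} \to V\otimes\O_S(1) \to \O_S(2)\to 0$, the tensor $A$ defines a section of $\Omega_{\P^4}(2)_{|S}$ precisely when its symmetrization $\tfrac12(A+\tr A)$ lies in $H^0(\I_S(2))=\scal{F_Q,F_{Q_2}}$, whose Gram matrices are $I$ and $D_2$. The cohomological inputs you invoke ($H^0(S,\O_S(1))=V$ and the degree-two part of the ideal being spanned by the two quadrics) follow from the Koszul resolution, so there is no gap.
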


We want to understand the scheme-theoretic vanishing locus of $s$. As a set, it is described~by:
\begin{fact}
\label{fact:s=0}
Let $u \in \wh S$ and $x=[u] \in S$.
Then $s(x)=0$ if and only if $u$ is an eigenvector of~$A$.
\end{fact}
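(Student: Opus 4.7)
The plan is to trace the definitions through the Euler sequence and the conormal exact sequence of $Q\subset \P^4$, reading off the vanishing of $s$ at a point $x=[u]$ as a linear-algebraic condition on $u$.

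First, at $u\in\wh S$ I would evaluate the lifted $1$-form
$$\wh s(u)\;=\;\sum_{i,j} a_{i,j}\,u_i\,dZ_j,$$
which I read as the linear form $\phi_u$ on $T_u\C^5\simeq \C^5$ sending $\xi$ to $\sum_{i,j} a_{i,j}u_i\xi_j$. The Euler sequence
$$0\to \Omega_{\P^4}(2)\to \O_{\P^4}(1)^{\oplus 5}\to \O_{\P^4}(2)\to 0$$
identifies $\Omega_{\P^4,x}(2)$ with the linear forms on $\C^5$ killing the Euler direction $\C u$. The required condition $\phi_u(u)=\sum_{i,j}a_{i,j}u_iu_j=0$ is automatic: by Fact~\ref{fact:A-skew} the symmetric part of $(a_{i,j})$ lies in the span of $I$ and $D_2$, and both of these quadratic forms vanish on $\wh S\subset \wh Q$. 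Thus $\phi_u$ does descend to the expected element $\tilde s(x)\in\Omega_{\P^4,x}(2)$.

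Next I would identify the kernel of the restriction $\Omega_{\P^4}(2)|_Q\twoheadrightarrow \Omega_Q(2)$: it is the conormal $N^*_{Q/\P^4}(2)\simeq \O_Q$, generated along $\wh Q$ by $\tfrac12\,dF_Q=\sum_i Z_i\,dZ_i$, which at $u$ is the linear form $\xi\mapsto \sum_i u_i\xi_i$---precisely the defining equation of $T_u\wh Q\subset T_u\C^5$. Consequently $s(x)=0$ if and only if $\phi_u$ is a scalar multiple of this form, i.e.\ there is $\lambda\in\C$ with $\sum_i a_{i,j}u_i=\lambda u_j$ for every $j$. This is exactly the eigenvector equation for $u$ with respect to the linear endomorphism of $\C^5$ whose matrix in the standard basis is $(a_{i,j})$ (up to the row/column convention made when packing the coefficients into the matrix $A$).

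The only conceptual content is matching the conormal direction of $Q\subset\P^4$ at $u$ with the linear form associated to $u$ itself; the rest is bookkeeping with the Euler sequence, and I do not anticipate any real obstacle.
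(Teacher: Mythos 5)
Your argument is correct and is essentially the paper's own proof: the paper likewise identifies $\wh s(u)$ with the vector $Au$ in the basis $dZ_j$, notes that (with $Q$ given by the identity matrix) the conormal direction of $\wh Q$ at $u$ is the linear form defined by $u$ itself, and concludes that $s(x)=0$ exactly when these are proportional, i.e.\ $Au\in\C u$. Your extra check that $\phi_u(u)=0$ via Fact~\ref{fact:A-skew} is harmless but not needed, since $\wh s$ is by construction the pullback of a section of $\Omega_{\P^4}(2)_{|S}$ and so automatically annihilates the Euler direction.
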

\begin{proof}
The quadratic form $Q$ yields an identification of $\C^5$ with its dual. Moreover, $\wh s(u)$ 
identifies in the basis $dZ_j$ to the column vector $Au$. Since the coordinates have been chosen so that the matrix of $Q$ is
$I$, the tangent space of $\wh Q$ at $u$ has equation $u$ itself. Thus $s(x)$ vanishes if and only if these
two linear forms define the same hyperplane, in other words if and only if $Au$ is a multiple of $u$.
\end{proof}

At first order, the vanishing of $s$ is characterized by:
\begin{fact}
\label{fact:ds=0}
Let $x=[u] \in S$ such that $Au=0$ and $u \not \in \im(A)$.
Let $X=[U] \in T_xS$, with $U \in T_u \wh S$.
Then, the derivative $ds_x(X)$ vanishes if and only if $AU=0$.
\end{fact}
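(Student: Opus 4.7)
The plan is to compute $ds_x(X)$ on the affine cone, where the lifted section $\wh s$ has the explicit form $v\mapsto Av$ in the trivialisation of $\Omega_U$ by $dZ_1,\ldots,dZ_5$. First I would lift $X$ to some $U\in T_u\wh S$ and choose a smooth curve $\wh\gamma:(-\varepsilon,\varepsilon)\to\wh S$ with $\wh\gamma(0)=u$ and $\dot{\wh\gamma}(0)=U$. Using the hypothesis $Au=0$, a Taylor expansion gives
$$
\wh s(\wh\gamma(t))\;=\;A(u+tU+O(t^2))\cdot dZ\;=\;t\,(AU)\cdot dZ+O(t^2),
$$
so, since $\wh s(u)=0$, the derivative of $\wh s$ along $\wh\gamma$ at $t=0$ is represented in the global frame $\{dZ_j\}$ by the constant covector $(AU)\cdot dZ\in T^*_uU$, and this covector represents the value of $d\widetilde s_x(X)$.

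I would then project this covector to $\Omega_Q(2)_{|x}$ through two successive quotients. To descend first to $\Omega_{\P^4}(2)_{|x}\cong u^\perp\subset T^*_uU$, I would check that $u\cdot AU=0$. From Fact \ref{fact:A-skew}, combined with $Au=0$, one gets $A^Tu=\alpha u+\beta D_2u$ for suitable scalars $\alpha,\beta$; and since $U\in T_u\wh S$ is tangent to the zero loci of both $\sum Z_i^2$ and $Q_2$, both $U\cdot u$ and $U\cdot D_2u$ vanish, so $u\cdot AU=U\cdot A^Tu=0$. The further passage to $\Omega_Q(2)_{|x}$ is a quotient by the conormal line $N^*_{Q|\P^4}(2)_{|x}$, spanned in the same trivialisation by $dF_Q(u)=2u\cdot dZ$. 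Hence $ds_x(X)$ vanishes in $\Omega_Q(2)_{|x}$ if and only if $AU$ is a scalar multiple of $u$.

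Finally, the assumption $u\notin\im(A)$ forces any such scalar to be zero, so $ds_x(X)=0$ is equivalent to $AU=0$. The choice of the lift $U$ above is immaterial: replacing $U$ by $U+\alpha u$ changes $AU$ by $\alpha Au=0$. The only delicate point is the descent to $\Omega_Q(2)_{|x}$, where one has to confirm both that $AU\cdot dZ$ really lies in $u^\perp$ (so that the affine derivative represents an element of $\Omega_{\P^4}(2)_{|x}$) and that the conormal direction of $Q$ reduces, in the coordinates chosen so that $Q=I$, to the line $\mathbb{C}\cdot u$; both verifications are routine once Fact \ref{fact:A-skew} and the tangency of $U$ to $\wh S$ are in hand.
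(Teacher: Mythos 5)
Your proposal is correct and follows essentially the same route as the paper: both compute the derivative of the lifted section $\wh s = \sum a_{i,j}Z_i\,dZ_j$ on the affine cone to get $AU$, identify the fibre of the relevant cotangent quotient with $\C^5$ modulo the line $\C\cdot u$ (the conormal direction of $Q$ in the coordinates where $Q=I$), and use $u\notin\im(A)$ to upgrade "$AU$ proportional to $u$" to "$AU=0$". Your extra verifications (that $u\cdot AU=0$ via Fact~\ref{fact:A-skew} and the tangency of $U$ to $\wh S$, and the independence of the choice of lift) are correct and merely make explicit what the paper's two-line proof leaves implicit.
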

\begin{proof}
As the proof of Fact \ref{fact:s=0} shows, $\Omega_{\wh Q,x}$ identifies with $\C^5/\C \cdot u$.
The statement then follows from the fact that $ds_x(X) = AX \in \C^5/\C \cdot u \simeq \Omega_{\wh Q,x}$.
\end{proof}

We now prove Lemma \ref{lemm:section-oq}. Let $\pi$ be the plane generated by $\Delta_1$ and $\Delta_2$.
Since $s$ vanishes along $\Delta_1$ and $\Delta_2$, by Fact \ref{fact:s=0}, $\wh \pi$ must be included in
an eigenspace of $A$. Replacing $A$ by $A - \lambda \cdot I$ does not change
the section $s$, thus we can assume that $\wh \pi \subset \ker A$. Therefore the rank of $A$ is at most $2$.

Assume first that $A$ has rank $2$. Let $x=[u] \in (\Delta_1 \cup \Delta_2) \setminus \P \im A$. Since $s$
vanishes at order two along $\Delta_1 \cup \Delta_2$, by Fact \ref{fact:ds=0}, we have
$T_u \wh S \subset \ker A$, and so equality of these subspaces. Since we may assume that
$x$ is not the intersection point $\Delta_1 \cup \Delta_2$, we get a contradiction with the following
fact:
\begin{fact}
We have $S \cap \pi = \Delta_1 \cup \Delta_2$. For $x \in \Delta_1 \setminus \Delta_2$,
$\overline{T_xS} \neq \pi$, where $\overline{T_xS} \subset \P^4$ denotes the embedded tangent space.
\end{fact}
\begin{proof}
Let $Q'$ be any quadric containing $S$. We have $Q' \cap \pi = \Delta_1 \cup \Delta_2$ or
$Q' \cap \pi = \pi$, for degree reasons. The first point follows. Assume now that
$x \in \Delta_1 \setminus \Delta_2$ and that $\overline{T_xS} = \pi$. Let $\ell$ be a line through $x$ and
a point $y$ in $\Delta_2 \setminus \Delta_1$. Once again, if $Q'$ is a quadric containing $S$,
then $\ell \cap Q'$ has multiplicity at least $2$ at $x$ 
($\ell\subset\pi=\overline{T_xS}\subset\overline{T_xQ'}$)
and one at $y$, thus $\ell \subset Q'$. This implies that $\ell \subset S$,
contradicting the first point of the Fact.
\end{proof}

Assume now that $A$ has rank $1$. We will use the following observation:
\begin{fact}
\label{fact:matrix-rk-1}
Let $B$ be a square matrix which is the sum of an alternate matrix and a diagonal matrix. Assume that
$\rank B = 1$. Then, up to a permutation of the rows and columns, $B$ can be written as a bloc-diagonal
matrix $\matdd \beta 0 0 0$, with $\beta$ a rank $1$ matrix of order $2$.
\end{fact}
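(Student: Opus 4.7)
The plan is to exploit the elementary identity $B + B^{\mathsf T} = 2D$: since the skew-symmetric part cancels, the only contribution of $B + B^{\mathsf T}$ is the diagonal matrix $2D$. This turns the hypothesis ``$B$ is a sum of alternate + diagonal'' into the much more tractable statement that $B + B^{\mathsf T}$ is diagonal.

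Using that $B$ has rank $1$, write $B = vw^{\mathsf T}$ for column vectors $v,w$. Then the constraint becomes: for all $i \neq j$,
\begin{equation*}
v_i w_j + w_i v_j \,=\, 0.
\end{equation*}
The first step is to show that the supports of $v$ and $w$ coincide: if $v_i \neq 0$ but $w_i = 0$, then for every $j \neq i$ the relation reduces to $v_i w_j = 0$, forcing $w = 0$ and contradicting $\rk B = 1$. By symmetry, $\{i : v_i \neq 0\} = \{i : w_i \neq 0\} =: S$.

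The second step is to bound $|S|$. For any two distinct indices $i,j \in S$, dividing the relation by $w_iw_j$ gives $v_i/w_i = -v_j/w_j$. If $|S| \geq 3$, picking three distinct indices in $S$ and applying this pairwise forces the ratio $v_i/w_i$ to equal its own negative, i.e.\ $v_i = 0$, contradicting $i \in S$. Hence $|S| \leq 2$.

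Finally, performing a common permutation of rows and columns that sends $S$ into $\{1,2\}$ makes $v$ and $w$ supported in the first two coordinates, so $B = vw^{\mathsf T}$ acquires the block form $\matdd{\beta}{0}{0}{0}$ with $\beta$ the $2\times 2$ outer product of the truncated vectors, of rank $\rk B = 1$. The case $|S| = 1$ fits into the same format (the $2\times 2$ block is the single non-zero diagonal entry padded by zeros, still rank one). There is no real obstacle — the whole argument is an exercise in linear algebra once one notices the symmetrization trick $B + B^{\mathsf T} = 2D$; the only mild care needed is the uniform bookkeeping between the cases $|S| = 1$ and $|S| = 2$.
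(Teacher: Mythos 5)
Your proof is correct. It takes a mildly different route from the paper's: you start from the rank-one factorization $B=v\,\tr w$ and translate the hypothesis into the bilinear relations $v_iw_j+v_jw_i=0$ for $i\neq j$, then argue on the common support $S$ of $v$ and $w$, with the ratio trick $v_i/w_i=-v_j/w_j$ forcing $\# S\leq 2$. The paper instead works entrywise: it first observes that some diagonal entry of $B$ must be nonzero (otherwise $B$ would be a rank-one alternate matrix, which is impossible), and then combines the vanishing of the $2\times 2$ minors $b_{i,i}b_{j,j}-b_{i,j}b_{j,i}$ with the relations $b_{i,j}+b_{j,i}=0$ to annihilate every entry outside the $2\times 2$ block indexed by the (at most two) nonzero diagonal entries. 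The two arguments are equivalent in substance --- your set $S$ is precisely $\{i \mid b_{i,i}=v_iw_i\neq 0\}$, and your bilinear relations are the off-diagonal entries of $B+\tr B=2D$ --- but the outer-product formulation packages the minor computations more transparently and dispenses with the preliminary remark about the diagonal, at the small cost of the $\# S=1$ versus $\# S=2$ bookkeeping at the end, which you handle correctly.
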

\begin{proof}
Write $B=(b_{i,j})$.
Since a coefficient of $B$ is non zero, a diagonal coefficient of $B$ must be non zero, and assume that
$b_{1,1} \neq 0$. If all the other diagonal coefficients are 0, then we have $b_{i,j}=0$ for
$(i,j) \neq (1,1)$ and the fact is true. In the other case, assume that $b_{2,2} \neq 0$. 
We have $b_{1,3}+b_{3,1}=b_{2,3}+b_{3,2}=b_{1,2}+b_{2,1}=0$ and
$b_{1,1}b_{2,3}-b_{2,1}b_{1,3} = b_{1,1}b_{3,2}-b_{3,1}b_{1,2}=0$, with $b_{1,1},b_{2,2},b_{1,2}$ and
$b_{2,1}$ different from $0$. It follows that
$b_{1,3}=b_{2,3}=b_{3,2}=b_{3,1}=0$. Similarly, all the coefficients $b_{i,j}$ are $0$
except when $i,j \leq 2$.
\end{proof}

Now, $A$ satisfies the hypothesis of Fact \ref{fact:matrix-rk-1}, and moreover the diagonal of $A$ is a linear
combination of $I$ and $D_2$. This implies that a linear combination of $I$ and $D_2$ has rank at most $2$,
contradicting the smoothness of $S$ (in fact, $S$ is smooth if and only if the quadrics in the pencil it defines
all have rank at least $4$). This ends the proof of Lemma \ref{lemm:section-oq}.

To complete the proof of Proposition \ref{prop:Q3-deg2}, one has to consider rank $2$ subsheaves in
$\Omega_Q$. This case follows from the case of rank $1$ subsheaves by the fact that $\Omega_Q$
is self-dual (see the end of the proof of Proposition \ref{prop:Q3-deg1}).

%\enlargethispage*{11pt}
\thispagestyle{empty}


\begin{thebibliography}{CMP08}

\bibitem[AN54]{an}
Yasuo Akizuki and Shigeo Nakano.
\newblock Note on {K}odaira-{S}pencer's proof of {L}efschetz theorems.
\newblock {\em Proc. Japan Acad.}, 30:266--272, 1954.

\bibitem[Bis04]{indranil}
I.~Biswas.
\newblock On the stability of homogeneous vector bundles.
\newblock {\em J. Math. Sci. Univ. Tokyo}, 11(2):133--140, 2004.

\bibitem[BK05]{bk}
Michel Brion and Shrawan Kumar.
\newblock {\em Frobenius splitting methods in geometry and representation
theory}, volume 231 of {\em Progress in Mathematics}.
\newblock Birkh\"auser Boston, Inc., Boston, MA, 2005.

\bibitem[BK12]{balaji-kollar}
V.~Balaji and J{\'a}nos Koll{\'a}r.
\newblock Restrictions of stable bundles.
\newblock In {\em Compact moduli spaces and vector bundles}, volume 564 of {\em
Contemp. Math.}, pages 177--184. Amer. Math. Soc., Providence, RI, 2012.

\bibitem[Bot57]{bott}
   Raoul Bott, ,
  \newblock Homogeneous vector bundles.
  \newblock {\em Ann. of Math.} (2), {66},{203--248}, 1957.
  

\bibitem[CMP08]{cmp}
P.~E. Chaput, L.~Manivel, and N.~Perrin.
\newblock Quantum cohomology of minuscule homogeneous spaces.
\newblock {\em Transform. Groups}, 13(1):47--89, 2008.

\bibitem[D-F04]{faenzi}
Carla Dionisi and Daniele Faenzi.
\newblock A simple vanishing theorem for twisted holomorphic forms on Hermitian symmetric varieties.
\newblock\textrm{http://dfaenzi.perso.math.cnrs.fr/publis/DF.vanishing.pdf}.

\bibitem[Dol12]{dolgachev}
Igor~V. Dolgachev.
\newblock {\em Classical algebraic geometry, a modern view}.
\newblock Cambridge University Press, Cambridge, 2012.

\bibitem[Fah89]{fahlaoui}
Rachid Fahlaoui.
\newblock Stabilit\'e du fibr\'e tangent des surfaces de del {P}ezzo.
\newblock {\em Math. Ann.}, 283(1):171--176, 1989.

\bibitem[Fle84]{Fl}
Hubert Flenner.
\newblock Restrictions of semistable bundles on projective varieties.
\newblock {\em Comment. Math. Helv.}, 59(4):635--650, 1984.

\bibitem[GH94]{griffiths-harris}
Phillip Griffiths and Joseph Harris.
\newblock {\em Principles of algebraic geometry}.
\newblock Wiley Classics Library. John Wiley \& Sons, Inc., New York, 1994.
\newblock Reprint of the 1978 original.

\bibitem[Har80]{hartshorne}
Robin Hartshorne.
\newblock Stable reflexive sheaves.
\newblock {\em Math. Ann.}, 254(2):121--176, 1980.

\bibitem[HL10]{hl}
Daniel Huybrechts and Manfred Lehn.
\newblock {\em The geometry of moduli spaces of sheaves}.
\newblock Cambridge Mathematical Library. Cambridge University Press,
Cambridge, second edition, 2010.

 \bibitem[Hwa98]{HW} 
 Jun-Muk Hwang. 
 \newblock Stability of tangent bundles of low-dimensional Fano manifolds with Picard number 1.
 \newblock{ \em Math. Ann.}, 312(4):599--606, 1998.

\bibitem[Kim91]{kim}
Sung-Ock Kim.
\newblock Noether-{Lefschetz} locus for surfaces.
\newblock {\em Transactions of the American Mathematical Society},
324(1):369--384, 1991.

\bibitem[KO73]{ko}
Shoshichi Kobayashi and Takushiro Ochiai.
\newblock Characterizations of complex projective spaces and hyperquadrics.
\newblock {\em J. Math. Kyoto Univ.}, 13:31--47, 1973.

\bibitem[Lan04]{la}
Adrian Langer.
\newblock Semistable sheaves in positive characteristic.
\newblock {\em Ann. of Math. (2)}, 159(1):251--276, 2004.

\bibitem[Laz04]{lazarsfeld}
Robert Lazarsfeld.
\newblock {\em Positivity in algebraic geometry, I. Classical setting: line
bundles and linear series}, volume~48 of {\em Ergebnisse der Mathematik und
ihrer Grenzgebiete. 3. Folge. A Series of Modern Surveys in Mathematics}.
\newblock Springer-Verlag, Berlin, 2004.

\bibitem[Lef21]{lefschetz}
Solomon Lefschetz.
\newblock On certain numerical invariants of algebraic varieties with
application to abelian varieties.
\newblock {\em Trans. Amer. Math. Soc.}, 22(3):327--406, 1921.

\bibitem[Mat89]{matsumura}
Hideyuki Matsumura.
\newblock {\em Commutative ring theory}, volume~8 of {\em Cambridge Studies in
Advanced Mathematics}.
\newblock Cambridge University Press, Cambridge, second edition, 1989.
\newblock Translated from the Japanese by M. Reid.

\bibitem[MR84]{me-ra}
V.~B. Mehta and A.~Ramanathan.
\newblock Restriction of stable sheaves and representations of the fundamental
group.
\newblock {\em Invent. Math.}, 77(1):163--172, 1984.

\bibitem[PW95]{PW}
Thomas Peternell and Jaros\l aw A. Wi\'sniewski.
\newblock {On stability of tangent bundles of {F}ano manifolds with {$b_2=1$}},
\newblock {\em J. Algebraic Geom.}, 4(2) :363--384, 1995.

\bibitem[Ram66]{ramanan}
S.~Ramanan.
\newblock Holomorphic vector bundles on homogeneous spaces.
\newblock {\em Topology}, 5:159--177, 1966.

\bibitem[Sno86]{snow}
D.~M. Snow.
\newblock Cohomology of twisted holomorphic forms on {G}rassmann manifolds and
quadric hypersurfaces.
\newblock {\em Math. Ann.}, 276(1):159--176, 1986.

\bibitem[Sno88]{snow2}
D.~M. Snow.
\newblock Vanishing theorems on compact {H}ermitian symmetric spaces.
\newblock {\em Math. Z.}, 198(1):1--20, 1988.

\bibitem[Ume78]{Um}
H.~Umemura.
\newblock On a theorem of {R}amanan.
\newblock {\em Nagoya Math. J.}, 69:131--138, 1978.
\enlargethispage{1cm}
\end{thebibliography}
\end{document}